\def\XXint#1#2#3{{\setbox0=\hbox{$#1{#2#3}{\int}$ }
\vcenter{\hbox{$#2#3$ }}\kern-.6\wd0}}
\def\ol#1{\overline{#1}}
\newcommand*{\mathcolor}{}
\def\mathcolor#1#{\mathcoloraux{#1}}
\newcommand*{\mathcoloraux}[3]{%
  \protect\leavevmode
  \begingroup
    \color#1{#2}#3%
  \endgroup
} 
\theoremstyle{plain}
\newtheorem{theorem}{Theorem}[section]
\newtheorem{lemma}[theorem]{Lemma}
\newtheorem{proposition}[theorem]{Proposition}
\newtheorem{corollary}[theorem]{Corollary}
\theoremstyle{definition}
\newtheorem{definition}[theorem]{Definition}
\def\bhyp#1{\begin{equation}\label{#1}\begin{gathered}}
\def\ehyp{\end{gathered}\end{equation}}
\newcounter{cst}
\def \ctel#1{C_{\refstepcounter{cst}\label{#1}\thecst}}
\def \cter#1{C_{\ref{#1}}}
\theoremstyle{remark}
\newtheorem{remark}{Remark}[section]
\numberwithin{equation}{section}
\numberwithin{figure}{section}
\definecolor{violet}{rgb}{0.580,0.,0.827}
\newcommand{\RR}{{\mathbb R}}
\newcommand{\NN}{{\mathbb N}}
\newcommand{\cM}{{\mathcal M}}
\newcommand{\distt}{\mathcal{D'}(\Omega\times(0,T))} 
\DeclareMathOperator{\dive}{\mathrm{div}} 
\DeclareMathOperator{\meas}{\mathrm{meas}} 
\newcommand{\ud}{\, \mathrm{d}} 
\newcommand{\supp}[1]{\mathrm{supp}\,#1} 
\newcommand{\U}{\mathbf{u}} 
\newcommand{\K}{\mathbf{K}} 
\newcommand{\tK}{\widetilde{\mathbf{K}}} 
\newcommand{\ch}{\mathbf{1}} 
\newcommand{\dm}{d_m}
\newcommand{\dl}{d_l}
\newcommand{\dt}{d_t}
\renewcommand{\O}{\Omega}
\newcommand{\eps}{\varepsilon}
\newcommand{\discK}{\mathfrak{D}_\K}
\newcommand{\bpsi}{\boldsymbol{\psi}}
\newcommand{\weakto}{\rightharpoonup}
\newcommand{\norm}[1]{\left\lVert#1\right\rVert} 
\newcommand{\seminorm}[1]{\left\lvert#1\right\rvert} 
\newcommand{\gradc}[2]{\nabla_{\! \!\{ #1 > #2\}}c} 
\newcommand{\gradf}[2]{\nabla_{\! \!\{ #1 > #2\}}f} 
\newcommand{\D}[3][ ]{\mathbf{D}_{#1}(#2, #3)} 
\newcommand{\Dsq}[3][ ]{\mathbf{D}_{#1}^{1/2}(#2, #3)} 
\newcommand{\DD}{\mathbf{D}}
\newcommand{\Leb}[3][ ]{L^{#2}(0, T; L^{#3}(\Omega)^{#1})} 
\newcommand{\Tprss}[3]{p_{#1}^{#2,#3}} 
\newcommand{\Tdarcy}[3]{\mathbf{u}_{#1}^{#2,#3}} 
\newcommand{\Tconc}[3]{c_{#1}^{#2,#3}} 
\newcommand{\prss}[2][ ]{p_{#2}^{#1}}
\newcommand{\darcy}[2][ ]{\mathbf{u}_{#2}^{#1}}
\newcommand{\conc}[2][ ]{c_{#2}^{#1}}
\newcommand{\nuapprox}{\nu_{n}}
\newcommand{\aapprox}{a_{n}}
\title[Miscible displacement with vanishing diffusion]{Analysis of miscible displacement 
through porous media with vanishing molecular diffusion and singular wells}
\author{J\'er\^ome Droniou}
\address[J\'er\^ome Droniou]{School of Mathematical Sciences, Monash University, Victoria 3800, Australia.}
\email{jerome.droniou@monash.edu}
\author{Kyle S. Talbot}
\address[Kyle S. Talbot]{School of Mathematical Sciences, Monash University, Victoria 3800, Australia.}
\email{kyle.talbot@monash.edu}
\thanks{The first author was supported in part by the Australian Research Council's Discovery Projects funding scheme
(project number DP170100605). The second author was supported in part by an Endeavour Research Fellowship from the Australian Government.}
\date{\today}
\begin{document}


\begin{abstract}
This article proves the existence of solutions to a model of incompressible 
miscible displacement through a porous medium, with zero molecular diffusion
and modelling wells by spatial measures.
We obtain the solution by passing to the limit on problems indexed by vanishing
molecular diffusion coef\-ficients. The proof employs cutoff functions to excise 
the supports of the measures and the discontinuities in the permeability tensor, 
thus enabling compensated compactness arguments used by Y. Amirat and A. Ziani for the analysis 
of the problem with $L^2$ wells [\emph{Z. Anal. Anwendungen}, 23(2):335--351, 2004]. 
We give a novel treatment of the diffusion-dispersion term, which
requires delicate use of the Aubin--Simon lemma to ensure the strong convergence of
the pressure gradient, owing to the troublesome lower-order terms introduced by the localisation 
procedure.
\end{abstract}

\keywords{flow in porous medium, elliptic--parabolic system, degenerate equations, existence,
measure data, vanishing diffusion}

\maketitle


\section{Introduction} \label{sec:intro}

\subsection{The miscible displacement problem} \label{ssec:model}
We study the single-phase, miscible displacement of one incompressible fluid by another 
through a porous medium, as occurs in enhanced oil recovery processes. 
Neglecting gravity, the model reads \cite{ew83,pe77}
\begin{subequations}\label{eq:model}
\begin{equation}\left.
	\begin{aligned}
		&\U(x,t) = -\frac{\K(x)}{\mu(c(x,t))}\nabla p(x,t)\\
		&\dive \U(x,t) = (q^I - q^P)(x,t)
	\end{aligned}
	\right\}, \quad(x,t)\in\O\times(0,T),\label{eq:darcy}
\end{equation} 
\begin{multline}
\Phi(x)\partial_{t}c(x, t) - \dive\big(\D{x}{\U(x,t)} \nabla c - c\mathbf{u}\big)(x, t) + (q^{P}c)(x, t) = (q^{I}\hat{c})(x, t),\\
\mbox{\quad $(x, t) \in\Omega \times (0, T)$,}\label{eq:conc}
\end{multline}
subject to the no-flow boundary conditions
\begin{align}
\U(x, t) \cdot \mathbf{n} &= 0, \mbox{\quad $(x, t) \in \partial \Omega \times (0, T)$, and} \label{eq:bc1}\\
\D{x}{\U(x,t)}\nabla c(x,t) \cdot \mathbf{n} &= 0, \mbox{\quad $(x, t) \in \partial\Omega \times (0, T)$,} \label{eq:bc2}
\end{align}
the initial condition
\begin{align}
c(x,0) = c_{0}(x), \mbox{ \quad $x \in \Omega$,} \label{eq:ic}
\end{align}
and a normalisation condition to eliminate arbitrary constants in the solution 
$p$ of the elliptic equation \eqref{eq:darcy}:
\begin{equation}
\int_{\Omega} p(x, t) \ud x = 0 \mbox{\quad for all $t \in (0, T)$.} \label{eq:normp}
\end{equation}

The unknowns of the system are the pressure $p$ and Darcy velocity $\U$ of the 
fluid mixture, and the concentration $c$ of one of the components in the fluid 
mixture. The reservoir is represented by $\O$, a bounded connected open 
subset of $\RR^d$, $d=2$ or $3$, and the recovery process occurs over the time interval $(0, T)$. 
The reservoir-dependent quantities of porosity and absolute 
permeability are $\Phi$ and $\K$, respectively. We denote by $q^I$ and $q^P$ the sums of injection 
well source terms and production well sink terms (henceforth collectively referred to
as source terms), respectively, and write $\hat{c}$ for the concentration of the injected fluid. 

The coefficient $\mathbf{D}$ in \eqref{eq:conc} is the diffusion-dispersion 
tensor, derived by Peaceman \cite{pe66} as 
\begin{equation}
\mathbf{D}(x, \mathbf{u}) = \Phi(x)\bigg(d_{m}\mathbf{I} + |\mathbf{u}|\Big(d_{l}E(\mathbf{u}) + d_{t}(\mathbf{I} - E(\mathbf{u}))\Big)\bigg), \label{eq:ddt}
\end{equation}
where
\begin{equation}
E(\mathbf{u}) = \left(\frac{\mathbf{u}_i \mathbf{u}_j}{|\mathbf{u}|^{2}} \right)_{1 \leq i,j \leq d} \label{eq:proj}
\end{equation}
is the projection in the direction of flow. The constants $d_{m}, d_{l}$ and 
$d_{t}$ are the mole\-cular diffusion coefficient and
the longitudinal and transverse mechanical dispersion coefficients, respectively. 
After Koval \cite{kov63} (see also \cite{ce99,rw83}), the concentration-dependent viscosity $\mu$ of the fluid 
mixture often assumes the form
\begin{equation}
\mu(c) = \mu(0)\left(1 + \big(M^{1/4} - 1\big)c\right)^{-4} \mbox{\quad for $c \in [0, 1]$,} \label{eq:visc}
\end{equation}
where the mobility ratio $M:=\frac{\mu(0)}{\mu(1)}>1$.
Finally, the boundary condition \eqref{eq:bc1} enforces a compatibility condition 
upon the source terms:
\begin{equation}
\int_{\Omega} q^{I}(x, t) \ud x = \int_{\Omega} q^{P}(x, t) \ud x 
\mbox{\quad for all $t \in (0, T)$.} \label{eq:compat} 
\end{equation}
\end{subequations}

\subsection{Principal contributions} \label{ssec:contribs}
Our main result, Theorem \ref{th:main}, is the existence of weak solutions to \eqref{eq:model}
when $\dm=0$ and $q^I$ and $q^P$ are modelled spatially as bounded, nonnegative Radon measures
on $\O$. Indeed, the novelty of this article is the presence of both these features
simultaneously; Amirat and Ziani \cite{az04} analyse the system as $\dm\to0$ with
$q^I$, $q^P\in\Leb{\infty}{2}$, and our previous work \cite{dt14} establishes
existence for $\dm>0$ and measure source terms. Fabrie and Gallou\"et \cite{fg00} 
assume that the diffusion-dispersion tensor is uniformly bounded to address
the latter scenario. The first existence result for \eqref{eq:model} as written above
is due to Feng \cite{fe95}, focussing mostly on the two-dimensional problem 
with sources in $\Leb{\infty}{2}$. The subsequent analysis of Chen and Ewing \cite{ce99} 
is valid for very general boundary conditions in three dimensions, but assumes
$d_m>0$ and regular source terms. Uniqueness is known for ``strong'' solutions \cite{fe95},
but appears to be open for weak solutions even with $\dm>0$ fixed \cite{az04,ce99,fe95}.

We prove Theorem \ref{th:main} by passing to the limit as $\dm\to0$ on a sequence of
problems with measure source terms defined in Section \ref{ssec:measapprox}. In further contrast to Amirat--Ziani
who take $\Phi\equiv1$ and $\K$ continuous, we only assume that the porosity is bounded,
and we allow for discontinuous permeabilities of the kind that one expects in practice \cite{dsws94}.
Working in such a low-regularity environment leads to the challenge of identifying
the limits of the nonlinear terms $-\frac{\K}{\mu(c)}\nabla p$ and $\D{\cdot}{\U}\nabla c$ as $\dm\to0$. 
For this task we use smooth cutoff functions --- first appearing in Section \ref{ssec:measest}
--- to excise both the supports of the
measures and the discontinuities in $\K$, thereby localising the problem to where
the data is smooth enough for us to employ a compensated compactness-type lemma
(Lemma \ref{lem:cc}). 

This localisation procedure nonetheless introduces problems of its own in the form of
lower-order terms that inhibit a straightforward proof of strong convergence of the 
pressure gradients, as is the case for $L^2$ sources. We handle these lower-order 
terms by exploiting the uniqueness of the solution to the elliptic problem in combination with
careful use of the Aubin--Simon compactness lemma to first prove strong 
convergence of the pressure itself in Section \ref{ssec:strongconvU}. 

Strong convergence of the pressure gradients (and then the Darcy velocities) is 
crucial for our treatment of the diffusion-dispersion term $\D{\cdot}{\U}\nabla c$
in Section \ref{ssec:passlimitconc}, which we believe is also novel. 
In particular, we fill a gap in the work of Amirat--Ziani by giving meaning to 
$\nabla c$ in the limit as $\dm\to0$. When the molecular diffusion
is neglected, the concentration gradient is only well-defined as a function in 
non-stagnant zones of the reservoir; that is, where $\U\neq\mathbf{0}$. 
We introduce a new notion in Section \ref{ssec:graddef}
that resolves this difficulty.

\subsection{Why vanishing molecular diffusion and singular wells?} 

The interest in studying \eqref{eq:model} with $\dm=0$ is twofold. In practice,
the mechanical dispersion coefficients will be at least an order of
magnitude larger than $\dm$, so the effects of molecular diffusion are negligible
compared to those of mechanical dispersion \cite{anderson84, pt02, young84}. 
Moreover, in practical simulations of \eqref{eq:model} the mesh size is such
that the effects of molecular diffusion are dominated by numerical diffusion, so
$\dm$ is often neglected from the simulation \cite{rw83,bs84}.

Scale differences motivate the decision to model $q^I$ and $q^P$ as measures. 
The diameter of typical reservoir ($\sim10^3$m) is several orders of magnitude larger than 
that of a typical wellbore ($\sim10^{-1}$m). At field scale the wells are thus 
effectively point (resp. line) sources in two (resp. three) dimensional models.



\section{Assumptions and main result} \label{sec:assump}

\subsection{Assumptions on the data} \label{ssec:assump}

We make the following assumptions on the data:
\begin{subequations}\label{assum:general}
\bhyp{hyp:domain}
T \in \RR_{+}^{\ast} \mbox{ and }\Omega \mbox{ is a bounded, connected, open subset of }\RR^{d}, d\leq3, \\
\mbox{ with a Lipschitz continuous boundary}.
\ehyp
Writing $\discK$ for the closure of the set of discontinuities of $\K$, we assume that $\discK$
has zero Lebesgue measure (in practice, $\discK$ is contained in a finite union of
hypersurfaces). Write $\mathcal{S}_d(\RR)$ for the 
set of $d\times d$ symmetric matrices. The permeability satisfies
\bhyp{hyp:K}
\mathbf{K}: \Omega \to \mathcal{S}_{d}(\RR) \mbox{ is locally Lipschitz continuous on $\O \setminus \discK$, and}\\
\exists k_{\ast}>0 \mbox{ such that, for a.e.\ }x \in \Omega \mbox{ and for all } \xi \in \RR^{d},\\
k_{\ast}|\xi|^{2} \leq \mathbf{K}(x)\xi \cdot \xi \leq k_{\ast}^{-1}|\xi|^{2}.
\ehyp
The porosity $\Phi$ is such that
\bhyp{hyp:porosity}
\Phi \in L^{\infty}(\Omega) \mbox{ and there exists } \phi_{\ast} > 0 \mbox{ such that for a.e.\ $x \in \Omega$, }\\
\phi_{\ast} \leq \Phi(x) \leq \phi_{\ast}^{-1}.
\ehyp
Particularly important to our analysis are the assumptions on the viscosity:
\bhyp{hyp:viscosity}
\mu \in C^{2}([0,1]; (0,\infty)) \mbox{ is such that $\mu''>0$ and $(1/\mu)''>0$.}\\
\mbox{We write $\mu_\ast$ and $\mu^\ast$ for the minimum and maximum of $\mu$, respectively.}
\ehyp
This implies the strict convexity of $\mu$ and $1/\mu$. 
Note that the form \eqref{eq:visc} satisfies \eqref{hyp:viscosity}.
By setting $d_m = 0$ in \eqref{eq:ddt}, we introduce the mechanical dispersion tensor
\begin{equation}
\D[\circ]{x}{\U} = \Phi(x)|\U|\Big(d_{l}E(\U) + d_{t}(\mathbf{I} - E(\U))\Big), \label{eq:mdt}
\end{equation}
and note that it satisfies
\bhyp{hyp:mdt}
\mathbf{D}_\circ: \O\times\RR^d \to \mathcal{S}_{d}(\RR) 
\mbox{ is a Carath\'eodory function such that for a.e.\ $x \in \O$}\\
\mbox{and for all $\zeta,\xi\in\RR^{d}$, }
\D[\circ]{x}{\zeta}\xi\cdot\xi \geq \phi_\ast \min(\dl,\dt)|\zeta||\xi|^{2} \mbox{ and }\\ 
|\D[\circ]{x}{\zeta}\xi|\leq\phi_{\ast}^{-1}\max(\dl,\dt)|\zeta||\xi|.
\ehyp
The injected and initial concentration are such that
\bhyp{hyp:injectedconc}
\hat c \in L^{\infty}(0,T; C(\ol{\Omega})) \mbox{ satisfies } 0 \leq \hat c(x, t) \leq 1 \mbox{ for a.e.\ }(x, t) \in \Omega \times (0, T),
\ehyp
\bhyp{hyp:initialconc}
c_{0} \in L^{\infty}(\Omega) \mbox{ satisfies } 0 \leq c_{0}(x) \leq 1 \mbox{ for a.e.\ } x \in \Omega.
\ehyp
The source terms are such that
\bhyp{hyp:source}
q^{I} = a\nu \mbox{ and }q^{P} = b\nu, \mbox{ where }\\
a, b \in L^\infty(0,T; C(\ol{\O}))\mbox{ are nonnegative on $\O\times(0,T)$,}\\
\nu\in \cM_+(\Omega)\cap (W^{1,\ell}(\O))'\mbox{ for all $\ell>2$,}\\
\mbox{and $\supp(\nu)$ has zero Lebesgue measure.}
\ehyp
Here $\cM_{+}(\Omega)$ is the set of bounded nonnegative Radon measures on $\Omega$.
The compatibility condition imposed by \eqref{eq:bc1} becomes
\begin{equation}
\int_{\Omega}a(x, t)\ud \nu(x) = \int_{\Omega}b(x, t)\ud \nu(x) \quad \forall t \in (0, T). \label{hyp:compatibility}
\end{equation}
\end{subequations}

\begin{remark} 
We impose the condition $\nu\in (W^{1,\ell}(\O))'$ for all $\ell>2$ in order to employ
a sharp uniqueness result for the elliptic equation with measure data. 
This uniqueness result --- which compensates for the absence of estimates on $\partial_{t}p$ ---
is instrumental to establishing the strong convergence of the pressure.
This $(W^{1,\ell}(\O))'$ regularity is satisfied by all measures in two dimensions,
and by all measures that can reasonably be used to model wells in three dimensions;
see \cite{fg00}.
\end{remark}

For a topological vector space $X(\Omega)$ of functions on $\Omega$, we write 
$(X(\Omega))'$ for its topological dual. When writing the duality pairing
$\langle \cdot , \cdot \rangle_{(X(\Omega))', X(\Omega)}$, we omit the spaces if
they are clear from the context.
When $z\in(1, \infty)$ is a Lebesgue exponent, we write $z'=\frac{z}{z-1}$ for its conjugate. 
We denote by $W^{1,z}_\star(\O)$ those elements of $W^{1,z}(\O)$ whose integral
over $\O$ vanishes.
For $k \in \RR$ and $g: \O \to \RR$, we denote by $\{ g = k \}$ the level set 
$\{ x \in \O \, | \, g(x) = k \}$;
similarly for sublevel sets $\{ g \leq k \}$, $\{ g < k \}$ and superlevel sets 
$\{ g \geq k \}$, $\{ g > k \}$.
When a constant appears in an estimate we track only its relevant dependencies.
In particular, we do not indicate dependencies with respect to $\phi_\ast$, $d_l$, 
$d_t$, $T$, $\O$, $k_\ast$, $\mu_\ast$, $\mu^\ast$ or $\hat{c}$, as these quantities 
remain constant throughout the paper.
When stating that a certain constant depends only on some quantity $X$,
it is implicitly understood that this dependency is nondecreasing.

Before detailing our results, we must first introduce a new concept that is key to
our notion of solution when $d_m = 0$.


\subsection{The concentration gradient in the absence of molecular diffusion}
\label{ssec:graddef}

Consider $\dm=\eps>0$. Write $(p_\eps,\U_\eps,c_\eps)$ for the corresponding
solution to \eqref{eq:model} (the existence of which we discuss shortly), and
$\D[\eps]{\cdot}{\U_\eps}$ the corresponding diffusion-dispersion tensor. A straightforward
computation using the definition \eqref{eq:ddt} shows that
\begin{equation*}
\int_0^T\int_\O\D[\eps]{\cdot}{\U_\eps}\nabla c_\eps\cdot\nabla c_\eps
\geq \eps\int_0^T\int_\O|\nabla c_\eps|^2
+ \min(\dl,\dt)\int_0^T\int_\O|\U_\eps||\nabla c_\eps|^2.
\end{equation*}
Thus, in order to obtain estimates on $\nabla c_\eps$ as $\eps\to0$, it seems necessary
to first restrict attention to regions where $|\U_\eps|>\eta>0$. This leads to the
following definition, which we use in the treatment of the diffusion-dispersion
term to give meaning to $\nabla c$ in the limit as $\dm\to0$.
\begin{definition} \label{def:ugrad}
Let $f, v \in \Leb{2}{2}$, with $v \geq 0$. We say that $f$ has a \emph{$\{ v > 0\}$-gradient} 
if
\begin{itemize}
\item there are sequences $(f_\eps)_{\eps>0}$ in $L^2(0,T;H^1(\O))$ and 
$(v_\eps)_{\eps>0}$ in $L^1(\O\times(0,T))$ such that as $\eps\to0$,  
	\begin{align*}
	f_\eps &\rightharpoonup f \quad \mbox{weakly in $\Leb{2}{2}$,}\\
	v_\eps &\to v \quad \mbox{a.e.\ on }\O \times (0,T);
	\end{align*}
\item there is a sequence $(\eta_i)_{i\in\NN}$ in $\RR$ with $\eta_i\to 0^{+}$ as $i\to\infty$ such that 
for every $i\in\NN$, $\meas(\{v=\eta_i\})=0$, and for some function $\chi_{\eta_i} \in \Leb[d]{2}{2}$,
\begin{align*}
\mathbf{1}_{\{ v_\eps > \eta_i \}}\nabla f_\eps \rightharpoonup \chi_{\eta_i } \quad 
\mbox{weakly in $\Leb[d]{2}{2}$ as }\eps\to0.
\end{align*}
\end{itemize}
We then denote $\gradf{v}{\eta_{i}} := \chi_{\eta_i}$ the 
\emph{$\{ v > \eta_i \}$-gradient of $f$} and define the \emph{$\{ v > 0 \}$-gradient of $f$} 
as the function $\gradf{v}{0}$ satisfying
\begin{equation*}
\gradf{v}{0} = 
	\begin{cases} 
	\gradf{v}{\eta_i} & \text{on }\{ v > \eta_i \} \quad\forall i\in\NN,\\
	0 & \text{on }\{ v = 0 \}.
	\end{cases}
\end{equation*}
\end{definition}
Appendix \ref{sec:appa} establishes some important properties that this construction satisfies.

\begin{remark}\label{rem:freg} If $f$ is a regular function then
$\gradf{v}{0}=\nabla f$ on $\{v>0\}$.
\end{remark}


\subsection{Main result} \label{ssec:results}

The principal contribution of this article is the following existence result.
\begin{theorem} \label{th:main}
Under Hypotheses \eqref{assum:general}, there exists a weak solution $(p,\U,c)$
to \eqref{eq:model} with $d_m=0$ in the following sense:
\begin{subequations}\label{def:sol.meas}
\begin{equation} \label{eq:measc}
	\begin{gathered}
	c\in L^\infty(\O\times(0,T))\,,\;0 \leq c(x,t) \leq 1 \mbox{\quad for a.e.\ $(x,t)\in\O\times(0,T)$,} \\
	c \in L^{\infty}(0,T; L^{1}(\Omega, \nu)), \\
	0 \leq c(x,t) \leq 1 \mbox{\quad for $\nu$-a.e.\ $x \in \Omega$, for a.e.\ $t \in (0,T)$,}
	\end{gathered} 
\end{equation}
\begin{equation} \label{eq:measdtc}
\Phi \partial_{t}c \in L^{2}(0,T; (W^{1,s}(\Omega))') \quad \forall s > 2d, 
\end{equation}
\begin{equation} \label{eq:measic}
\Phi c \in C([0, T];(W^{1,s}(\Omega))'), \quad \Phi c(\cdot, 0) = \Phi c_{0} 
\mbox{ in $(W^{1, s}(\Omega))'$} \quad \forall s > 2d,
\end{equation}
\begin{equation} \label{eq:measdgradc}
	\begin{gathered}
	c \mbox{ has a $\{ |\U|>0 \}$-gradient, and}\\
	\D[\circ]{\cdot}{\U}\gradc{|\U|}{0} \in L^{2}(0,T; L^{r}(\Omega)^{d}) \quad \forall r < \frac{2d}{2d-1}, 
	\end{gathered} 
\end{equation}
\begin{equation} \label{eq:measpu}
p \in L^{\infty}(0,T; W^{1, q}_\star(\Omega)), \quad \mathbf{u} \in L^{\infty}(0,T; L^{q}(\Omega)^d) 
\quad \forall q < \frac{d}{d-1}, 
\end{equation}
\begin{equation} \label{eq:measpara}
	\begin{gathered}
	\int_{0}^{T}\langle \Phi \partial_{t}c(\cdot, t), \varphi(\cdot, t)\rangle\ud t \\
	+ \int_{0}^{T}\int_{\Omega} \D[\circ]{x}{\U(x,t)}\gradc{|\U|}{0}(x,t)\cdot\nabla\varphi(x,t)\ud x\ud t\\
	- \int_{0}^{T}\int_{\Omega} c(x,t)\U(x,t)\cdot \nabla \varphi(x, t) \ud x \ud t  \\
	+ \int_{0}^{T}\int_{\Omega} c(x,t)\varphi(x,t)b(x,t)\ud \nu(x) \ud t\\
	=\int_{0}^{T}\int_{\Omega} \hat c(x,t)\varphi(x,t)a(x,t)\ud \nu(x) \ud t
	\quad \forall \varphi \in \bigcup_{s>2d}L^{2}(0,T; W^{1,s}(\Omega)),
	\end{gathered} 
\end{equation}
\begin{equation} \label{eq:measell}
	\begin{gathered}
	\mathbf{u}(x, t) =  -\frac{\mathbf{K}(x)}{\mu(c(x, t))}\nabla p(x,t),  \\
	- \int_{0}^{T}\int_{\Omega}\mathbf{u}(x, t) \cdot \nabla \psi(x, t) \ud x \ud t \\
	= \int_{0}^{T}\int_{\Omega}\left(a-b\right)(x,t)\psi(x,t)\ud \nu(x)\ud t
	\quad \forall \psi \in \bigcup_{q>d} L^{1}(0,T; W^{1,q}(\Omega)).
	\end{gathered} 
\end{equation}
\end{subequations}
\end{theorem}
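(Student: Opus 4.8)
The plan is to obtain the solution by compactness as $\dm=\eps\to0$ along a sequence $(p_\eps,\U_\eps,c_\eps)$ solving the regularised problem with measure source terms (the approximate problems constructed in Section~\ref{ssec:measapprox}, whose solvability follows from the $\dm>0$ measure-data theory of \cite{dt14}). The first block of work is a priori estimates uniform in $\eps$: the maximum principle gives $0\le c_\eps\le1$ a.e.\ and $\nu$-a.e., which immediately yields \eqref{eq:measc}; testing the elliptic equation by $p_\eps$ and using coercivity of $\K/\mu(c_\eps)$ together with $\nu\in(W^{1,\ell}(\O))'$ and Sobolev embedding produces the $L^\infty(0,T;W^{1,q}_\star)$ bound on $p_\eps$ and hence the $L^\infty(0,T;L^q)$ bound on $\U_\eps$ for $q<d/(d-1)$, giving \eqref{eq:measpu}; testing the concentration equation by $c_\eps$ controls $\int_0^T\int_\O\D[\eps]{\cdot}{\U_\eps}\nabla c_\eps\cdot\nabla c_\eps$, whence the energy inequality displayed before Definition~\ref{def:ugrad} bounds $\eps^{1/2}\nabla c_\eps$ in $L^2$ and $|\U_\eps|^{1/2}\nabla c_\eps$ in $\Leb{2}{2}$; a duality/Sobolev argument then bounds $\Phi\partial_t c_\eps$ in $L^2(0,T;(W^{1,s})')$ for $s>2d$, yielding \eqref{eq:measdtc}.

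The second block is passage to the limit. From the estimates extract (subsequences, not relabelled) weak-$\ast$ limits $p$, $\U$ of $p_\eps$, $\U_\eps$ and, via the Aubin--Simon lemma applied to $(\Phi c_\eps)$ with the $\Phi\partial_t c_\eps$ bound, a strongly convergent subsequence of $c_\eps$ in, say, $C([0,T];(W^{1,s})')$ and a.e.\ on $\O\times(0,T)$; this gives $\mu(c_\eps)\to\mu(c)$ a.e.\ and, with dominated convergence, $\K/\mu(c_\eps)\to\K/\mu(c)$ strongly in every $L^r$. The crucial and hardest step is identifying the nonlinear limits. For the elliptic equation one must show $\nabla p_\eps\to\nabla p$ \emph{strongly}: here the localisation via the smooth cutoff functions of Section~\ref{ssec:measest} excises $\supp(\nu)$ and $\discK$, reducing to a region where a compensated-compactness argument (Lemma~\ref{lem:cc}) applies; but the cutoffs generate troublesome lower-order commutator terms, and these are absorbed by first proving strong convergence of $p_\eps$ itself (not just $\nabla p_\eps$) through Aubin--Simon applied to the pressure, leveraging the sharp uniqueness of the limiting elliptic equation with measure data --- this is precisely the mechanism flagged in Section~\ref{ssec:strongconvU}, and it compensates for the lack of any bound on $\partial_t p_\eps$. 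Once $\nabla p_\eps\to\nabla p$ and $\U_\eps\to\U$ strongly, one passes to the limit in $\U_\eps=-(\K/\mu(c_\eps))\nabla p_\eps$ and in the weak elliptic formulation to obtain \eqref{eq:measell}.

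The final block treats the diffusion-dispersion term and the parabolic formulation. Fix a level $\eta_i$ with $\meas(\{|\U|=\eta_i\})=0$ (such $\eta_i\to0^+$ exist since $|\U|\in L^1$); on $\{|\U_\eps|>\eta_i\}$ the energy estimate bounds $\nabla c_\eps$ in $L^2$, so $\ch_{\{|\U_\eps|>\eta_i\}}\nabla c_\eps\rightharpoonup\chi_{\eta_i}$ weakly, and the a.e.\ convergence $|\U_\eps|\to|\U|$ together with $\meas(\{|\U|=\eta_i\})=0$ lets us verify the hypotheses of Definition~\ref{def:ugrad}, so $c$ has a $\{|\U|>0\}$-gradient with the required structure; the appendix (Appendix~\ref{sec:appa}) supplies the consistency of the $\{v>\eta_i\}$-gradients across $i$. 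Using the strong convergence of $\U_\eps$ one identifies $\D[\eps]{\cdot}{\U_\eps}\nabla c_\eps\to\D[\circ]{\cdot}{\U}\gradc{|\U|}{0}$ weakly in $L^2(0,T;L^r)$ for $r<2d/(2d-1)$ --- the vanishing of $\eps\nabla c_\eps$ kills the molecular part, and on each $\{|\U|>\eta_i\}$ the product of a strongly convergent tensor with a weakly convergent gradient passes to the limit, while the contribution of $\{|\U|\le\eta_i\}$ is controlled uniformly and sent to $0$ with $\eta_i$ --- giving \eqref{eq:measdgradc}. Passing to the limit termwise in the approximate parabolic formulation (using strong $c_\eps\to c$ a.e.\ and in $L^\infty$-weak-$\ast$, strong $\U_\eps\to\U$, the convergence of the $\nu$-integrals via $c_\eps\to c$ $\nu$-a.e.\ and the continuity of $a,b,\hat c$ up to $\ol\O$) yields \eqref{eq:measpara}; the time-continuity and initial condition \eqref{eq:measic} follow from \eqref{eq:measdtc} and the strong $C([0,T];(W^{1,s})')$ convergence. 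I expect the main obstacle to be exactly the strong convergence of $\nabla p_\eps$ in the presence of the localisation commutators: without an estimate on $\partial_t p_\eps$ one cannot directly compactify the pressure in time, so the argument must route through the uniqueness of the limit elliptic problem, which is delicate and is the reason for the hypothesis $\nu\in(W^{1,\ell}(\O))'$ for all $\ell>2$.
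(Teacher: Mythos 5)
There is a genuine gap at the heart of your second block: you claim that Aubin--Simon applied to $(\Phi c_\eps)_{\eps>0}$ yields a subsequence of $c_\eps$ converging \emph{a.e.\ on $\O\times(0,T)$}, whence $\mu(c_\eps)\to\mu(c)$ a.e.\ and $\K/\mu(c_\eps)\to\K/\mu(c)$ strongly in every $L^r$. This is false and cannot be repaired along that route. The Aubin--Simon/Simon argument only gives convergence of $\Phi c_\eps$ in $C([0,T];(W^{1,s}(\O))')$, a negative-order space; it carries no pointwise or $L^p$-strong information in the space variable. To upgrade it one would need a uniform spatial gradient bound on $c_\eps$, and this is precisely what vanishes as $d_m=\eps\to0$: the energy estimate only controls $\eps^{1/2}\nabla c_\eps$ and $|\U_\eps|^{1/2}\nabla c_\eps$, so all spatial compactness of the concentration degenerates where the Darcy velocity is small. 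Consequently the identification of $\mu(c)$ in the Darcy law cannot proceed by weak--strong convergence as you propose.

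The paper's actual mechanism — which your sketch omits entirely — is a compensated-compactness argument (Lemma \ref{lem:cc}) applied \emph{twice}, to $\mu(c_\eps)\darcy{\eps}$ and to $\Phi\darcy{\eps}/\mu(c_\eps)$, using $BV(0,T)$ bounds on $t\mapsto\int_\O\Phi\,\gamma(c_\eps)\psi$ for $\gamma=\mu$ and $\gamma=1/\mu$ (obtained by testing the truncated concentration equation with $\gamma'(c)\psi$), together with the local $W^{2,r}$ regularity of $\theta p_\eps$. One only knows $\mu(c_\eps)\weakto\ol\mu$ and $1/\mu(c_\eps)\weakto1/\underline\mu$ weak-$*$, and the identification $\ol\mu\,\U=\underline\mu\,\U$, hence $\U=-\K\nabla p/\mu(c)$, rests on the strict convexity of \emph{both} $\mu$ and $1/\mu$ in \eqref{hyp:viscosity} (the Amirat--Ziani argument). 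Only \emph{after} this identification does one recover a.e.\ convergence of $c_\eps$, and only on $\{|\U|\neq0\}$ (Lemma \ref{lem:ae.cN.meas}, again via strict convexity of $\mu$); that partial a.e.\ convergence is then what feeds the strong convergence of $\nabla p_\eps$ and $\darcy{\eps}$. Your ordering (a.e.\ convergence of $c_\eps$ first, then limit identification) inverts the logical dependence and skips the step that makes the whole proof work. The remainder of your outline — the localisation/uniqueness route to strong convergence of the pressure, and the $\eta_i$-level-set treatment of the diffusion-dispersion term — does match the paper's strategy.
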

To reiterate, the duality product in the first term of \eqref{eq:measpara} is between 
$W^{1,s}(\O)$ and its dual.

\begin{remark} Following Remark \ref{rem:freg}, if $c$ is regular then
$\gradc{v}{0}$ can be replaced with $\nabla c$ in \eqref{eq:measpara}.
\end{remark}


\section{Approximate problems and associated estimates} \label{ssec:measapprox}

We obtain the solution $(p,\U,c)$ to \eqref{def:sol.meas} by passing to the limit 
on approximate problems defined below. 
Let $\eps>0$. Replace the molecular diffusion coefficient $d_m$ in \eqref{eq:ddt} with $\eps$ to obtain 
a family of diffusion-dispersion tensors:
\begin{equation}
\D[\eps]{x}{\U} := \Phi(x)\bigg(\eps\mathbf{I}+|\U|\Big(\dl E(\U)+\dt(\mathbf{I}-E(\U))\Big)\bigg). \label{eq:epsddt}
\end{equation}
Then for almost every $x\in\O$, for all $\xi$, $\zeta\in\RR^d$,
\begin{align}
\D[\eps]{x}{\zeta}\xi\cdot\xi &\geq \phi_{\ast}(\varepsilon 
+ \mathrm{min}(\dl,\dt)|\zeta|)|\xi|^{2}, \label{eq:DNcoerc}\\
|\D[\eps]{x}{\zeta}| &\leq \phi_{\ast}^{-1}(\varepsilon + \mathrm{max}(\dl,\dt)|\zeta|).\label{eq:DNbound}
\end{align}
Moreover, writing $\DD_\eps^{1/2}$ for the square-root of $\DD_\eps$ (which is well-defined 
since $\DD_\eps$ is positive-definite), one can show that
\begin{equation}\label{eq:Dsq:bound}
|\Dsq[\eps]{\cdot}{\zeta}|\le \phi_\ast^{-1/2}(\eps+\max(\dl,\dt)|\zeta|)^{1/2}.
\end{equation}
In order to define our approximate problems, we need access to the solution when the
source terms are regular and the molecular diffusion is fixed. To this end, replace
$\DD$ by $\DD_\eps$ in \eqref{eq:model} and fix both
$\eps$ and $\nuapprox\in L^{2}(\O)$ (where $n\in\NN$ will vary in subsequent notions of solution). 
Then Feng \cite{fe95} and Chen and Ewing 
\cite{ce99} show that there exists a weak solution 
$(\prss[n]{\eps},\darcy[n]{\eps},\conc[n]{\eps})$ to \eqref{eq:model} satisfying
\begin{equation}\label{def:sol.L2.N}
\begin{gathered}
\conc[n]{\eps}\in L^2(0,T;H^1(\O)),\quad 0\leq\conc[n]{\eps}(x,t)\leq 1 \quad \mbox{for a.e.\ $(x,t)\in\O\times(0,T)$,}\\
\Phi \partial_{t}\conc[n]{\eps}\in L^{2}(0,T; (W^{1, 4}(\O))'), \\
\Phi\conc[n]{\eps} \in C([0,T];(W^{1,4}(\O))'),\quad\Phi\conc[n]{\eps}(\cdot,0)
=\Phi c_{0}\mbox{ in $(W^{1, 4}(\O))'$}, \\
\D[\eps]{\cdot}{\darcy[n]{\eps}}\nabla\conc[n]{\eps} \in \Leb[d]{2}{4/3},\\
\prss[n]{\eps} \in L^{\infty}(0,T;H^{1}_\star(\O)),\quad\darcy[n]{\eps}\in L^{\infty}(0,T;L^{2}(\O)^d),\\
\mbox{$(\prss[n]{\eps},\darcy[n]{\eps},\conc[n]{\eps})$ satisfies \eqref{eq:measpara} 
for all $\varphi\in L^{2}(0,T; W^{1,4}(\O))$}\\ 
\mbox{with $\DD_\circ$ and $\gradc{|\U|}{0}$ replaced by $\DD_\eps$ and $\nabla\conc[n]{\eps}$, resp.,}\\
\mbox{$(\prss[n]{\eps},\darcy[n]{\eps},\conc[n]{\eps})$ satisfies \eqref{eq:measell} 
for all $\psi\in L^{1}(0,T; H^{1}(\Omega))$.}
\end{gathered}
\end{equation}
Keeping $\DD_\eps$ (with $\eps$ fixed), consider now $\nu\in\cM_+(\O)$. 
Our previous work \cite{dt14} shows that for every $\eps>0$, there exists a solution 
$(\prss{\eps},\darcy{\eps},\conc{\eps})$ to \eqref{eq:model} in the following sense:
\begin{subequations}\label{def:sol.L2N.meas}
\begin{equation}
	\begin{gathered}
	\conc{\eps}\in L^{2}(0,T;H^{1}(\O)),\quad 0\leq\conc{\eps}(x,t)\leq 1 
	\mbox{\quad for a.e.\ $(x,t)\in\O\times(0,T)$,} \\
	\conc{\eps}\in L^{\infty}(0,T; L^{1}(\O,\nu)), \\
	0 \leq\conc{\eps}(x,t)\leq 1, \mbox{\quad for $\nu$-a.e.\ $x\in\O$, for a.e.\ $t\in (0,T)$,}
	\end{gathered} \label{eq:jdktc}
\end{equation}
\begin{equation}
\Phi\partial_{t}\conc{\eps}\in L^{2}(0,T;(W^{1,s}(\O))')\quad\forall s>2d, \label{eq:jdktdtc}
\end{equation}
\begin{equation}
\Phi\conc{\eps}\in C([0, T];(W^{1,s}(\O))'),\quad \Phi \conc{\eps}(\cdot,0)=\Phi c_{0} 
\mbox{ in $(W^{1,s}(\O))'$}\quad\forall s>2d,\label{eq:jdktic}
\end{equation}
\begin{equation} \label{eq:jdktdgradc}
\D[\eps]{\cdot}{\darcy{\eps}}\nabla\conc{\eps}\in L^{2}(0,T; L^{r}(\O)^{d})\quad\forall r<\frac{2d}{2d-1}, 
\end{equation}
\begin{equation}
\prss{\eps}\in L^{\infty}(0,T; W^{1,q}_\star(\O)),\quad\darcy{\eps}\in L^{\infty}(0,T;L^{q}(\O)^d) 
\quad \forall q < \frac{d}{d-1}, \label{eq:jdktpu}
\end{equation}
\begin{equation}\label{eq:jdktpara}
	\begin{gathered} 
	\mbox{$(\prss{\eps},\darcy{\eps},\conc{\eps})$ satisfies \eqref{eq:measpara}}\\
	\mbox{with $\DD_\circ$ and $\gradc{|\U|}{0}$ replaced by $\DD_\eps$ and $\nabla\conc{\eps}$, resp.,}
	\end{gathered}
\end{equation}	
\begin{equation} \label{eq:jdktell}
\mbox{$(\prss{\eps},\darcy{\eps},\conc{\eps})$ satisfies \eqref{eq:measell}.}
\end{equation}
\end{subequations}

\begin{remark}
Standard arguments show that the integral relation in \eqref{eq:jdktell} is equivalent to
\begin{multline}\label{eq:jdktell.pt}
- \int_{\O}\darcy{\eps}(x,t)\cdot \nabla \psi(x) \ud x =\int_{\O}(a-b)(x,t)\psi(x)\ud \nu(x),\\
\mbox{for a.e.\ $t\in (0,T)$, }\forall \psi \in \bigcup_{q>d} W^{1,q}(\Omega).
\end{multline}
\end{remark}

We are now ready to define precisely the approximate problems that we work with in
the subsequent analysis. 
The following two definitions provide the details.
\begin{definition}[Solution-by-truncation to \eqref{def:sol.L2.N}]\label{def:sol.by.truncation}
Assume \eqref{assum:general}. Fix $\nuapprox\in L^2(\O)$, $\eps>0$ and take $k\in\NN$. 
Define the truncated tensor, for $(x,\zeta)\in\Omega\times\RR^d$, by 
\begin{equation}\label{def:truncatedD}
\DD_\eps^k(x,\zeta)=\DD_\eps\left(x,\min(|\zeta|,k)
\frac{\zeta}{\left|\zeta\right|}\right).
\end{equation}
Then a \emph{solution-by-truncation to \eqref{def:sol.L2.N}} is 
a triple $(\prss[n]{\eps},\darcy[n]{\eps},\conc[n]{\eps})$ that satisfies \eqref{def:sol.L2.N}
and such that if $(\Tprss{\eps}{n}{k},\Tdarcy{\eps}{n}{k},\Tconc{\eps}{n}{k})$ is a solution 
to \eqref{def:sol.L2.N} with $\DD_\eps$ replaced by $\DD_\eps^k$
then, along a subsequence as $k\to\infty$,
\begin{equation}\label{conv.Nk}
	\begin{aligned}
	&\Tprss{\eps}{n}{k}\to \prss[n]{\eps}\mbox{ strongly in $L^2(0,T;H^1(\O))$},\\
	&\Tdarcy{\eps}{n}{k}\to \darcy[n]{\eps}\mbox{ strongly in $\Leb[d]{2}{2}$, and}\\
	&\Tconc{\eps}{n}{k}\to \conc[n]{\eps}\mbox{ a.e.\ on $\O\times (0,T)$ and weakly in $L^2(0,T;H^1(\O))$}.
	\end{aligned}
\end{equation}
\end{definition}
\begin{remark} \label{rem:truncD}
Our previous work \cite[Section 3.3]{dt14} establishes the existence of a
solu\-tion-by-truncation to \eqref{def:sol.L2.N}.
The interest in considering $\nuapprox\in L^2(\O)$ and a truncated (and therefore bounded)
diffusion-dispersion tensor is twofold.
It enables us to consider test functions $\varphi\in L^2(0,T;H^1(\O))$ for the 
concentration equation, so that $\varphi=\Tconc{\eps}{n}{k}$ is an admissible test function. 
The concentration equation then shows that $\Phi\partial_t\Tconc{\eps}{n}{k}\in L^2(0,T;(H^1(\O))')$.
\end{remark}
\begin{definition}[Solution-by-approximation to \eqref{def:sol.L2N.meas}]\label{def:sol.by.approx}
Assume \eqref{assum:general}.
A \emph{solution-by-approximation to \eqref{def:sol.L2N.meas}} is a triple $(\prss{\eps},\darcy{\eps},\conc{\eps})$
satisfying \eqref{def:sol.L2N.meas} and such that there exists 
$(\nuapprox)_{n\in\NN}\subset L^2(\O)$, $(\aapprox)_{n\in\NN}\subset 
L^\infty(0,T;C(\ol{\O}))$ and $(\prss[n]{\eps},\darcy[n]{\eps},\conc[n]{\eps})_{n\in\NN}$, with
\begin{itemize}
\item $\nuapprox\geq0$, $\nuapprox\to\nu$ in $(C(\ol{\O}))'\cap (W^{1,\ell}(\O))'$ 
weak-$*$ as $n\to\infty$ (for all $\ell>2$),
and for all $\eta>0$ there exists $N\in\NN$ such that if $n\geq N$,
$\supp(\nuapprox)\subset \supp(\nu)+B(0,\eta)$,
\item $\aapprox\geq 0$, $(\aapprox)_{n\in\NN}$ is bounded in $L^\infty(0,T;C(\ol{\O}))$ and
$\aapprox\to a$ a.e.\ on $\O\times (0,T)$ as $n\to\infty$,
\item $(\nuapprox,\aapprox,b)$ satisfy the compatibility condition \eqref{hyp:compatibility},
\item $(\prss[n]{\eps},\darcy[n]{\eps},\conc[n]{\eps})$ is a solution-by-truncation 
to \eqref{def:sol.L2.N} with $(\nu,a)$ replaced by $(\nuapprox,\aapprox)$,
\end{itemize}
and, along a sequence as $n\to\infty$,
\begin{equation}\label{conv.Neps}
\begin{aligned}
&\prss[n]{\eps}\to\prss{\eps}\mbox{ strongly in $L^2(0,T;W^{1,q}(\O))$ for all $q<\frac{d}{d-1}$},\\
&\darcy[n]{\eps}\to\darcy{\eps}\mbox{ strongly in $\Leb[d]{2}{q}$ for all $q<\frac{d}{d-1}$, and}\\
&\conc[n]{\eps}\to\conc{\eps}\mbox{ a.e.\ on $\O\times (0,T)$ and weakly in $L^2(0,T;H^1(\O))$}.
\end{aligned}
\end{equation}
\end{definition}
\begin{remark}
The existence of a solution-by-approximation to \eqref{def:sol.L2N.meas} is known \cite[Section 4.3]{dt14}. 
Fabrie--Gallou\"et \cite[Section 5]{fg00} establishes the existence of an approximation 
$(\nuapprox,\aapprox)$ of $(\nu,a)$ that satisfies the requirements of Definition
\ref{def:sol.by.approx}. 
\end{remark}
Table \ref{table:solutions} helps to visualise the relationship between these notions
of solution to \eqref{eq:model}. 
Access to the solution $(\Tprss{\eps}{n}{k},\Tdarcy{\eps}{n}{k},\Tconc{\eps}{n}{k})$
of the truncated problem is only required for Lemma \ref{lem:measdarcylimit} and 
the first step of Lemma \ref{lem:strconv.pN}. The rest of the analysis is largely conducted
on the solution-by-approximation $(\prss{\eps},\darcy{\eps},\conc{\eps})$.
\renewcommand{\arraystretch}{1.5}
\renewcommand{\tabcolsep}{0.2cm}
\begin{table}[h!] 
\caption{Notions of solution to \eqref{eq:model}}
\label{table:solutions}
\begin{tabular}[c]{c c}
\hline
Data & Solution \\
\hline
$\DD_\eps^k$, $\eps=\dm>0$, $\nuapprox\in L^2(\O)$ & 
$(\Tprss{\eps}{n}{k},\Tdarcy{\eps}{n}{k},\Tconc{\eps}{n}{k})$\\
\multicolumn{2}{c}{$\bigg\downarrow k\to\infty$}\\
$\eps=\dm>0$, $\nuapprox\in L^2(\O)$ & solution-by-truncation 
$(\prss[n]{\eps},\darcy[n]{\eps},\conc[n]{\eps})$\\
\multicolumn{2}{c}{$\bigg\downarrow n\to\infty$}\\
$\eps=\dm>0$, $\nu\in\cM_+(\O)$ & solution-by-approximation 
$(\prss{\eps},\darcy{\eps},\conc{\eps})$\\
\multicolumn{2}{c}{$\bigg\downarrow \eps\to0$}\\
$\dm=0$, $\nu\in\cM_+(\O)$ & solution $(p,\U,c)$ to \eqref{def:sol.meas}\\
\hline
\end{tabular}
\end{table}

We now recall the estimates necessary for our subsequent analysis. Taking $\Tconc{\eps}{n}{k}$
as a test function in its own equation \cite[Eq. (3.8)]{dt14} (see Remark \ref{rem:truncD})
gives a bound on 
$\DD_\eps^k(\cdot,\Tdarcy{\eps}{n}{k})\nabla\Tconc{\eps}{n}{k}\cdot\nabla\Tconc{\eps}{n}{k}$
in $L^1(\O\times(0,T))$ that is independent of $k$, $n$ and $\eps$. 
Passing to the limit as $k\to\infty$ and then as $n\to\infty$ (in that order) gives
\begin{equation}
	\norm{\Dsq[\eps]{\cdot}{\darcy{\eps}}\nabla\conc{\eps}}_{\Leb[d]{2}{2}}=
	\norm{\D[\eps]{\cdot}{\darcy{\eps}}\nabla\conc{\eps}\cdot\nabla\conc{\eps}}_{L^1(\O\times(0,T))}
	\leq \cter{dmeas}, \label{eq:measenergyest}
\end{equation}
where $\ctel{dmeas}$ does not depend on $\eps$. 
It is well-known \cite{bg89,bg92,fg00} that for all $q\in[1,\frac{d}{d-1})$ there exists a 
constant $\ctel{measellest}$ not depending on $\eps$ such that
\begin{equation} \label{eq:measdarcyest}
\norm{\prss{\eps}}_{L^{\infty}(0,T; W^{1,q}(\O))} \leq \cter{measellest}\mbox{ and }
\norm{\darcy{\eps}}_{\Leb[d]{\infty}{q}} \leq \cter{measellest}.
\end{equation}
Estimates \eqref{eq:Dsq:bound} and \eqref{eq:measdarcyest} give a bound on
$|\Dsq[\eps]{\cdot}{\darcy{\eps}}|$ in $L^\infty(0,T;L^s(\O))$ for all $s<2d/(d-1)$.
Combined with \eqref{eq:measenergyest}, the decomposition 
$\D[\eps]{\cdot}{\darcy{\eps}}\nabla\conc{\eps} = 
\Dsq[\eps]{\cdot}{\darcy{\eps}}\Dsq[\eps]{\cdot}{\darcy{\eps}}\nabla\conc{\eps}$
and H\"older's inequality,
this shows that for all $r<\frac{2d}{2d-1}$, 
there exists a constant $\ctel{measd.r}$ not depending on $\eps$ such that
\begin{equation} \label{eq:measdest}
\norm{\D[\eps]{\cdot}{\darcy{\eps}}\nabla\conc{\eps}}_{\Leb[d]{2}{r}} \leq \cter{measd.r}.
\end{equation}
Applying the coercivity \eqref{eq:DNcoerc} to \eqref{eq:measenergyest} gives
\begin{equation}	
\norm{|\darcy{\eps}|^{1/2}\nabla\conc{\eps}}_{\Leb[d]{2}{2}}
\leq \cter{dmeas}^{1/2}\phi_{\ast}^{-1/2}\min(\dl,\dt)^{-1/2}. \label{eq:measuhalfgradcest}
\end{equation} 
As for \eqref{eq:measdest}, from estimates \eqref{eq:measdarcyest} and \eqref{eq:measuhalfgradcest},
for every $r\in[1, \frac{2d}{2d-1})$ we obtain the existence of a constant 
$\ctel{measugradc}$ not depending on $\eps$ such that
\begin{equation} \label{eq:measugradcest}
\norm{|\darcy{\eps}|\nabla\conc{\eps}}_{\Leb[d]{2}{r}} \leq \cter{measugradc}.
\end{equation}
Finally, from \eqref{eq:jdktpara} and the previous estimates, for every $s>2d$ there is
a constant $\ctel{measdtc1}$ not depending $\eps$ such that
\begin{equation} \label{eq:measdtcest}
\norm{\Phi\partial_{t}\conc{\eps}}_{L^{2}(0,T;(W^{1,s}(\Omega))')}\leq\cter{measdtc1}.
\end{equation}
\begin{remark} In \cite[Corollary 3.2]{dt14}, it is stated that the constant $\cter{measd.r}$ depends
only on $\norm{\nu}_{L^1(\Omega)}$, when in fact it depends on $\norm{\nu}_{L^2(\O)}$. 
This is a typographical error and has no impact on the other results in that article.
\end{remark}
\begin{remark}\label{rem:W1l} 
Using the regularity result of Monier and Gallou\"et 
\cite{mg99} and the fact that $\nu\in (W^{1,\ell}(\O))'$ for all $\ell>2$, as in 
Fabrie--Gallou\"et \cite{fg00} we see that \eqref{eq:measdarcyest}
holds for any $q<2$. In order to demonstrate that this additional regularity is required in only a few places,
we retain \eqref{eq:measdarcyest} and all subsequent estimates with $q<d/(d-1)$.

By using the Stampacchia formulation of the solution to linear elliptic equations with measures
\cite{stam65}, we previously analysed \cite{dt14}
the model \eqref{eq:model} for $d_m>0$. This Stampacchia formulation
provides the uniqueness of the solution to linear elliptic equations with measure data,
without the additional $(W^{1,\ell}(\O))'$ regularity
assumption. However, it is unclear if our reasoning below could be adapted to
this formulation, rather than the (more natural) \eqref{sol:limit.ell}.
\end{remark}


\section{Proof of Theorem \ref{th:main}} \label{sec:measprob}


\subsection{Improving local elliptic regularity} \label{ssec:measest}   

Multiplying the elliptic equation \eqref{eq:darcy} by an appropriately chosen cutoff function $\theta$
excises the singularities caused by the measure sources and localises the problem
to regions where the absolute permeability $\K$ is regular. Our next lemma follows
an analogous procedure to Amirat--Ziani by rewriting the pressure equation in a form
that yields higher local regularity of the solution.
\begin{lemma} \label{lem:measgrisvardest}
Assume \eqref{assum:general}.
For $\eps>0$, let $(\prss{\eps},\darcy{\eps},\conc{\eps})$ be a solution-by-approxi\-mation 
to \eqref{def:sol.L2N.meas}. Let $\theta\in C^\infty_c(\O)$ be such that 
$\supp(\theta)\cap(\discK\cup\supp(\nu))=\emptyset$
and take $r<\frac{2d}{2d-1}$.
Then there exists $\ctel{measgris1}$, depending on $\theta$ and $r$ but not on $\eps$, such that
\begin{align}
\norm{\theta\prss{\eps}}_{L^{\infty}(0,T;H^1(\O))}&\leq \cter{measgris1}, \label{eq:measgrisest0}\\
\norm{\theta\prss{\eps}}_{L^{2}(0,T;W^{2,r}(\O))}&\leq \cter{measgris1}, \label{eq:measgrisest1}\\
\norm{\theta\darcy{\eps}}_{L^{2}(0,T;W^{1,r}(\O)^{d})}&\leq \cter{measgris1}. \label{eq:measgrisest2}
\end{align}
\end{lemma}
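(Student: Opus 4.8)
The plan is to test the elliptic equation \eqref{eq:jdktell.pt} against $\theta^2\psi$-type functions (or equivalently, to derive the equation satisfied by $\theta\prss{\eps}$) so as to land on a linear elliptic equation for $w_\eps:=\theta\prss{\eps}$ posed on all of $\RR^d$ (after extension by zero), with a right-hand side that no longer sees the measure $\nu$. Concretely, since $\supp(\theta)$ avoids $\supp(\nu)$, multiplying $\dive\darcy{\eps} = (a-b)\nu$ by $\theta$ kills the measure and produces, after expanding $\dive(\theta\darcy{\eps})=\theta\dive\darcy{\eps}+\darcy{\eps}\cdot\nabla\theta$, an equation of the form
\begin{equation*}
-\dive\!\Big(\tfrac{\K}{\mu(\conc{\eps})}\nabla w_\eps\Big) = F_\eps \quad\text{in }\RR^d,
\end{equation*}
where $F_\eps$ collects the lower-order terms $\dive\big(\tfrac{\K}{\mu(\conc{\eps})}\prss{\eps}\nabla\theta\big)$, $\darcy{\eps}\cdot\nabla\theta$, and a term involving $\nabla\theta\cdot\nabla\prss{\eps}$; crucially all of these involve only $\prss{\eps}$, $\nabla\prss{\eps}$ (equivalently $\darcy{\eps}$) on $\supp(\nabla\theta)$, which is controlled in $\Leb[d]{\infty}{q}$ for $q<d/(d-1)$ by \eqref{eq:measdarcyest}, hence in $\Leb[d]{\infty}{r}$ for the given $r<\frac{2d}{2d-1}\le\frac{d}{d-1}$. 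So $F_\eps$ is bounded in $L^\infty(0,T;W^{-1,r}(\RR^d))$ uniformly in $\eps$.

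**Key steps.**
First I would record that on $\supp(\theta)$ the coefficient $\K/\mu(\conc{\eps})$ is locally Lipschitz in $x$ (since $\supp(\theta)\cap\discK=\emptyset$ and $\K$ is locally Lipschitz off $\discK$) — but note $\mu(\conc{\eps})$ need not be smooth in $x$, so I should \emph{not} differentiate the coefficient; instead I keep it in divergence form and invoke an elliptic regularity result for $-\dive(A\nabla\cdot)=\dive G + h$ with $A$ merely bounded, measurable, uniformly elliptic (coefficients $k_\ast\le A\le k_\ast^{-1}$ thanks to \eqref{hyp:K} and $\mu_\ast\le\mu(\conc{\eps})\le\mu^\ast$). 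That gives, for each fixed $t$, the Meyers-type / Stampacchia estimate $\|w_\eps(\cdot,t)\|_{W^{1,r}(\RR^d)}\le C\|F_\eps(\cdot,t)\|_{W^{-1,r}}$ for $r$ close enough to $2$ — and since $r<\frac{2d}{2d-1}<2$ this is in the admissible range. Integrating (or taking essential sup) in $t$ yields \eqref{eq:measgrisest0}, hence \eqref{eq:measgrisest2} after subtracting the controlled $\prss{\eps}\nabla\theta$ piece. For the \emph{second-derivative} estimate \eqref{eq:measgrisest1} I would additionally use that, on $\supp(\theta)$, $\K$ is genuinely Lipschitz (and bounded below), so one can appeal to $W^{2,r}$ elliptic regularity à la Grisvard for the operator $-\dive(\K\nabla\cdot)$ after rewriting $\tfrac{\K}{\mu(\conc{\eps})}\nabla w_\eps = \tfrac{\darcy{\eps}\theta}{\text{(stuff)}}\ldots$; more cleanly, write the equation as $-\dive(\K\nabla w_\eps)=\mu(\conc{\eps})F_\eps + \K\nabla w_\eps\cdot\nabla(\text{something})$ — here one exploits $\mu(c_\eps)\darcy{\eps}=-\K\nabla p_\eps$ so that $\theta\darcy{\eps}$ is, up to the controlled corrector, $-\tfrac1{\mu(\conc{\eps})}\K\nabla w_\eps$, and $\mathrm{div}(\theta\darcy{\eps})$ is already known in $L^2(0,T;L^r)$ from the above — then the smoothness of $\K$ on $\supp(\theta)$ promotes $w_\eps$ to $L^2(0,T;W^{2,r})$ with an $\eps$-independent bound. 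All constants depend on $\theta$ (through $\|\nabla\theta\|_\infty$, $\mathrm{dist}(\supp\theta,\discK\cup\supp\nu)$), on $r$, and on the fixed data only.

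**Main obstacle.**
The delicate point is the interplay between the low integrability ($r<\frac{2d}{2d-1}$, so genuinely below $L^2$ in $d=3$) and the fact that the diffusion coefficient $\K/\mu(\conc{\eps})$ is only $L^\infty$ in $x$ (because of $\mu(\conc{\eps})$), which is exactly why one cannot simply quote classical $W^{2,r}$ Schauder/Calderón–Zygmund theory directly: one must first gain $W^{1,r}$-regularity for the rough-coefficient divergence-form equation (Meyers/Stampacchia, valid only for $r$ in a neighbourhood of $2$, which forces keeping $r<\frac{2d}{2d-1}$ rather than pushing higher), and only afterwards, using $\theta\darcy{\eps}=-\tfrac{1}{\mu(\conc{\eps})}\K\nabla w_\eps+(\text{corrector})$ to re-express things in terms of the \emph{smooth} tensor $\K$, upgrade to $W^{2,r}$. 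Threading these two regularity results together while keeping every constant independent of $\eps$ — and checking that the lower-order terms $F_\eps$ genuinely live in the dual space with uniform bounds, which rests on \eqref{eq:measdarcyest} — is where the real work lies; the rest is bookkeeping with cutoffs and Hölder's inequality.
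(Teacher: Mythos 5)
There is a genuine gap, concentrated in two places. The first and most important: your route to the $W^{2,r}$ estimate \eqref{eq:measgrisest1} never confronts the term that actually dictates both the exponent restriction $r<\frac{2d}{2d-1}$ and the drop to $L^2$ integrability in time. Keeping the rough coefficient $\K/\mu(\conc{\eps})$ in divergence form is fine for a first-order estimate, but to reach $W^{2,r}$ you must eventually rewrite the equation with the \emph{smooth} tensor alone on the left, and doing so necessarily differentiates the factor $1/\mu(\conc{\eps})$ and produces the term $\theta\mu'(\conc{\eps})\darcy{\eps}\cdot\nabla\conc{\eps}$ on the right-hand side --- this is exactly the paper's localised equation \eqref{eq:thetap3}. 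That term is controlled only through the energy estimate, i.e.\ by \eqref{eq:measugradcest}, which is an $L^2(0,T;L^r(\O))$ bound valid precisely for $r<\frac{2d}{2d-1}$; this is where the exponent and the $L^2$-in-time integrability of \eqref{eq:measgrisest1}--\eqref{eq:measgrisest2} come from. Your sketch replaces this by ``$\K\nabla w_\eps\cdot\nabla(\text{something})$'' and asserts that $F_\eps$ is bounded in $L^\infty(0,T;W^{-1,r})$, which silently omits the concentration gradient --- the heart of the lemma.

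The second gap is the appeal to Meyers/Stampacchia for $r<\frac{2d}{2d-1}$. Meyers' theorem gives $W^{1,r}$ bounds for rough uniformly elliptic coefficients only for $r$ in a neighbourhood $(2-\delta,2+\delta)$ of $2$, with $\delta$ determined by the ellipticity ratio; $r<6/5$ (in $d=3$) has no reason to lie in that range, and your statement that the restriction $r<\frac{2d}{2d-1}$ ``forces keeping $r$'' in the Meyers range misattributes the origin of the exponent. Moreover, even if such an estimate held it would only deliver $W^{1,r}$ with $r<2$, not the $H^1$ bound \eqref{eq:measgrisest0} you claim to deduce from it. The paper instead obtains \eqref{eq:measgrisest0} from local elliptic regularity away from the support of the measure (Droniou's local estimates applied to the $L^2$-source approximations $\prss[n]{\eps}$, then letting $n\to\infty$); with \eqref{eq:measgrisest0} in hand the manipulations leading to \eqref{eq:thetap3} are justified, the right-hand side is bounded in $L^2(0,T;L^r(\O))$ via \eqref{eq:measdarcyest} and \eqref{eq:measugradcest}, and Grisvard's $W^{2,r}$ theory for the globally Lipschitz tensor $\tK$ closes the argument, with \eqref{eq:measgrisest2} then following by differentiating $\theta\darcy{\eps}=-\frac{\theta}{\mu(\conc{\eps})}\tK\nabla\prss{\eps}$ term by term. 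Your overall plan --- cut off to kill the measure, replace $\K$ by a global Lipschitz extension, invoke $W^{2,r}$ regularity --- is the right skeleton, but the two missing ingredients above are where the actual work lies.
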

\begin{proof}
\emph{Step 1: proof of \eqref{eq:measgrisest0}.}

Consider \eqref{eq:jdktell.pt} with $a$ and $\nu$ replaced by $\aapprox$ and $\nuapprox$.
For almost every $t\in(0,T)$, the local elliptic estimates in Droniou \cite[Theorem 2]{D03} 
show that $\prss[n]{\eps}$ satisfies \eqref{eq:measgrisest0} with a bound not depending 
on $n$ or $\eps$. Passing to the limit as $n\to\infty$ shows that \eqref{eq:measgrisest0} holds.

\emph{Step 2: derivation of localised equation.}

Take $\theta$ satisfying the hypotheses of the lemma, and consider \eqref{eq:darcy} 
with $p$, $\U$ and $c$ replaced by $\prss{\eps}$, $\darcy{\eps}$ and $\conc{\eps}$, 
respectively. Multiplying the first equation by $\theta$ gives, in the sense of 
distributions,
\begin{equation*}
\theta\darcy{\eps} = -\theta\frac{\K}{\mu(\conc{\eps})}\nabla\prss{\eps} 
= -\frac{\K}{\mu(\conc{\eps})}\nabla (\theta\prss{\eps}) 
+ \prss{\eps}\frac{\K}{\mu(\conc{\eps})}\nabla\theta.
\end{equation*} 
Multiplying the second equation by $\theta$ yields
\begin{equation*}
\dive(\theta\darcy{\eps})-\darcy{\eps}\cdot\nabla\theta = \theta(a-b)\nu.
\end{equation*}
The property of $\supp(\nu)$ and the choice of $\theta$ show that $\theta\nu=0$, 
so the right-hand side of the previous equality vanishes.
Combining these expressions using standard computations that are justified (in the sense of distributions)
by the regularity \eqref{eq:measgrisest0}, then simplifying
where appropriate using the definition of $\darcy{\eps}$ leads to
\begin{equation} \label{eq:thetap3}
	-\dive{(\K\nabla(\theta\prss{\eps}))} = -\prss{\eps}\dive{(\K\nabla\theta)} 
	+ \theta\mu'(\conc{\eps})\darcy{\eps}\cdot\nabla\conc{\eps}
	+ 2\mu(\conc{\eps})\darcy{\eps}\cdot\nabla\theta.
\end{equation}
In order to apply Grisvard's estimates we require that the diffusion matrix belong to the 
class $C^{0,1}(\ol{\O}; \mathcal{S}_{d}(\RR))$. Note that each term in 
\eqref{eq:thetap3} contains $\theta$, so that both sides vanish outside the 
support of $\theta$. We may therefore replace $\K$ in \eqref{eq:thetap3} by a 
uniformly coercive Lipschitz tensor $\tK$ that agrees with $\K$ on $\supp(\theta)$ 
whilst retaining equality.
Take $\rho\in C^{\infty}_{c}(\O)$ with $0 \leq \rho \leq 1$ and such that
$\supp(\theta)\subset \omega \subset \supp(\rho) \subset \O\setminus \discK$,
where $\omega$ is an open set such that $\rho \equiv 1$ on $\omega$. Define
\begin{equation}\label{def:tK}
\tK := \rho\K + (1-\rho)\mathbf{I}.
\end{equation}
Then $\tK\in C^{0,1}(\ol{\O}; \mathcal{S}_{d}(\RR))$ and satisfies 
$\tK = \K$ on $\supp(\theta)$, $\tK = \mathbf{I}$ outside $\supp(\rho)$. 
Furthermore, for almost every $x\in\O$ and for all $\xi\in\RR^d$ we have
$\tK(x)\xi\cdot\xi \geq \mathrm{min}(1, k_{\ast})|\xi|^2$.
Replacing $\K$ with $\tK$ in the first two terms of \eqref{eq:thetap3}, we are lead
to the following localised pressure equation:
\begin{equation} \label{eq:measgris1} 
	-\dive{\left(\tK\nabla(\theta \prss{\eps})\right)} 
	=-\prss{\eps}\dive{\left(\tK\nabla\theta\right)}
	+ \theta\mu'(\conc{\eps})\darcy{\eps}\cdot\nabla \conc{\eps}
	+ 2\mu(\conc{\eps})\darcy{\eps}\cdot\nabla\theta.
\end{equation}

\emph{Step 3: proof of local estimates.}

The equation \eqref{eq:measgris1} is satisfied on $\O\times (0,T)$, 
but due to the compact support of $\theta$, it also holds on $B\times (0,T)$, 
where $B$ is ball containing $\O$ and all functions are extended by $0$ outside
$\O$. Estimates \eqref{eq:measdarcyest} and \eqref{eq:measugradcest} then show that 
for every $1\leq r<2d/(2d-1)$ the right-hand side of \eqref{eq:measgris1} 
is bounded in $\Leb{2}{r}$ uniformly in $\eps$. 
Then \eqref{eq:measgrisest1} follows from Grisvard \cite[Eq. (2.3.3.1)]{gr85} 
and from the fact that $\theta \prss{\eps}(\cdot,t)\in H^1_0(B)$ for almost every $t\in (0,T)$.

For \eqref{eq:measgrisest2}, write $\tK = (\tilde{k}_{ij})_{i,j=1,\ldots,d}$. 
Observing the summation convention, the $i$-th component of $\theta\darcy{\eps}$ is
\begin{equation*}
(\theta\darcy{\eps})_i = -\frac{\theta}{\mu(\conc{\eps})}\tilde{k}_{ij}\partial_{x_{j}}\prss{\eps}.
\end{equation*} 
By the regularity properties \eqref{eq:jdktc} and \eqref{eq:jdktpu} of $\prss{\eps}$ and $\conc{\eps}$,
we can write, in the sense of distributions,
\begin{align*}
\partial_{x_{l}}(\theta\darcy{\eps})_i ={}& 
(\partial_{x_{l}}\theta)\left( -\frac{\tilde{k}_{ij}}{\mu(\conc{\eps})}\partial_{x_{j}}\prss{\eps}\right) 
- \theta\frac{\mu'(\conc{\eps})}{\mu(\conc{\eps})}(\darcy{\eps})_i\partial_{x_{l}}\conc{\eps} \nonumber\\
&- \frac{\theta}{\mu(\conc{\eps})}(\partial_{x_{l}}\tilde{k}_{ij})
(\partial_{x_{j}}\prss{\eps})-\frac{\theta}{\mu(\conc{\eps})}\tilde{k}_{ij}
\partial_{x_{l}x_{j}}^{2}\prss{\eps}\nonumber\\
={}& T_1 + T_2 + T_3 + T_4. 
\end{align*}
Thanks to \eqref{eq:measdarcyest}, both $T_1$ and $T_3$ are bounded in $\Leb{\infty}{q}$ 
for every $q<\frac{d}{d-1}$. For every $r<\frac{2d}{2d-1}$, estimate term $T_2$ 
in $\Leb{2}{r}$ using \eqref{eq:measugradcest}. For $T_4$, use \eqref{eq:measgrisest1}.
\end{proof}


\subsection{Extraction of converging sequences} \label{ssec:measlimit}

From \eqref{eq:jdktc} the sequence $(\conc{\eps})_{\eps>0}$ is bounded in $L^\infty(\O\times(0,T))$, 
so that up to a subsequence
\begin{equation} \label{eq:meascntoc}
\conc{\eps}\weakto c \quad \mbox{in $L^\infty(\O\times(0,T))$ weak-$\ast$},
\quad 0 \leq c \leq 1 \mbox{ a.e.\ in }\O\times(0, T),
\end{equation}
which proves the first part of \eqref{eq:measc}; the second part follows at the end
of Section \ref{ssec:passlimitconc}. Estimate \eqref{eq:measdarcyest} implies the existence 
of extracted subsequences such that
\begin{align}
\prss{\eps} &\weakto p \quad \mbox{in $L^{\infty}(0,T; W^{1,q}_\star(\O))$ weak-$\ast$} 
\quad \forall 1\leq q < \frac{d}{d-1}, \mbox{ and} \label{eq:measpntop}\\
\darcy{\eps} &\weakto \U \quad \mbox{in $\Leb[d]{\infty}{q}$ weak-$\ast$}\quad \forall 1\leq q < \frac{d}{d-1},
\label{eq:measuntou} 
\end{align}
which proves \eqref{eq:measpu}. The porosity is independent of time, so for every $s>2d$,
\eqref{eq:measdtcest} provides an estimate in $L^{2}(0,T; (W^{1,s}(\O))')$ of the 
sequence $(\partial_{t}(\Phi\conc{\eps}))_{\eps>0}$, from which we conclude that
\begin{equation}\label{eq:lim.dt}
\Phi\partial_{t}\conc{\eps}\weakto\Phi\partial_{t}c\quad\mbox{ weakly in $L^{2}(0,T;(W^{1,s}(\O))')$
for all $s>2d$},
\end{equation}
thus proving \eqref{eq:measdtc}. Furthermore, $(\Phi\conc{\eps})_{\eps>0}$ is bounded 
in $L^\infty(\O\times(0,T))$, and $L^\infty(\O)$ is compactly embedded in $(W^{1,s}(\O))'$
(since $W^{1,s}(\O)$ is compactly and densely embedded in $L^1(\O)$).
A classical compactness lemma due to Simon \cite{sim-87-com} therefore
ensures that, up to a subsequence,
$\Phi\conc{\eps}\to\Phi c$ in $C([0,T]; (W^{1,s}(\O))')$ for all $s>2d$,
which proves \eqref{eq:measic}. 

\subsection{Passing to the limit in the pressure equation}

The proof that $(p,\U,c)$ satisfies the elliptic equation \eqref{eq:measell} will be complete
by passing to the limit in \eqref{eq:jdktell}, provided that we identify $\U$.
For this we follow the ideas of Amirat--Ziani \cite[Lemma 2.4]{az04}, who rely
on a variant of the compensated compactness phenomenon due to Kazhikhov \cite{kaz98}. 
Our proof necessarily departs from that of Amirat--Ziani due to our use of the 
cutoff functions $\theta$. We also correct an error in their estimate of the term
corresponding to our $\partial_{t}\mu(\conc{\eps})$. They claim this sequence is 
bounded $L^2$ in time, when in fact it is only $L^1$ (for both regular and measure
source terms). This necessitates our use of the $BV(0,T)$ spaces and a
compensated compactness result adapted to this regularity, Lemma \ref{lem:cc} in the appendix.

\begin{lemma} \label{lem:measdarcylimit}
Assume \eqref{assum:general} and for $\eps>0$,
let $(\prss{\eps},\darcy{\eps},\conc{\eps})$ be a solution-by-approxi\-ma\-tion to \eqref{def:sol.L2N.meas}.
Assume that \eqref{eq:meascntoc}--\eqref{eq:measuntou} hold. 
Then for almost every $(x,t)\in\O\times(0,T)$,
\begin{equation}
\U(x,t) = -\frac{\K(x)}{\mu(c(x,t))}\nabla p(x,t).\label{eq:measdarcylimit}
\end{equation}
\end{lemma}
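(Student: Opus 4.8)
\emph{Strategy.} The plan is to prove \eqref{eq:measdarcylimit} by a localised compensated-compactness argument following Amirat--Ziani \cite{az04} and Kazhikhov \cite{kaz98}, the cutoff functions $\theta$ serving to remove the singular set $\discK\cup\supp(\nu)$, followed by a convexity argument --- using \emph{both} $\mu''>0$ and $(1/\mu)''>0$ from \eqref{hyp:viscosity} --- to identify the weak limit of $\mu(\conc{\eps})$. Since $\discK\cup\supp(\nu)$ is Lebesgue-null, it suffices to prove \eqref{eq:measdarcylimit} a.e.\ on $\supp(\theta)\times(0,T)$ for every $\theta\in C^\infty_c(\O)$ with $\supp(\theta)\cap(\discK\cup\supp(\nu))=\emptyset$, and then to exhaust $\O\setminus(\discK\cup\supp(\nu))$ by such supports. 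Fix such a $\theta$ and extract a further subsequence along which $\mu(\conc{\eps})\weakto\overline{\mu}$ and $1/\mu(\conc{\eps})\weakto\overline{a}$ weak-$*$ in $L^\infty(\O\times(0,T))$; both limits lie between positive constants.

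\emph{Local compactness and the main obstacle.} Because $\theta\nu=0$, we have $\dive(\theta\darcy{\eps})=\darcy{\eps}\cdot\nabla\theta$, which by \eqref{eq:measdarcyest} is bounded in $\Leb{\infty}{q}$ for $q<\frac{d}{d-1}$; Lemma \ref{lem:measgrisvardest} bounds $\theta\prss{\eps}$ in $L^2(0,T;W^{2,r}(\O))$, so $\theta\nabla\prss{\eps}$ is bounded in $L^2(0,T;W^{1,r}(\O)^d)$ and its curl, being a bounded combination of $\nabla\theta$ and $\nabla\prss{\eps}$, is bounded in $\Leb{\infty}{q}$. Using the parabolic equation for $\conc{\eps}$ --- whose measure terms vanish upon multiplication by $\theta$ --- together with the chain rule and estimates \eqref{eq:measenergyest}, \eqref{eq:measdest}, \eqref{eq:measugradcest}, one obtains a bound, uniform in $\eps$, for $\partial_t(\theta\mu(\conc{\eps}))$ and $\partial_t(\theta/\mu(\conc{\eps}))$ in $L^1(0,T;(W^{1,m}(\O))')$ with $m$ large. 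The chain rule generates the term $\mu''(\conc{\eps})\,\D[\eps]{\cdot}{\darcy{\eps}}\nabla\conc{\eps}\cdot\nabla\conc{\eps}$, which by \eqref{eq:measenergyest} is controlled only in $L^1(\O\times(0,T))$; this is why the time-regularity is merely $L^1$ --- not $L^2$, as incorrectly asserted in \cite{az04} --- and why the $BV(0,T)$-type compensated compactness Lemma \ref{lem:cc} is required rather than a classical div--curl or Aubin--Simon argument. Carrying out this chain-rule estimate carefully in the present low-regularity setting, uniformly in $\eps$, is the main technical obstacle.

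\emph{Passing to the limit in Darcy's law.} Lemma \ref{lem:cc} now applies to the pairs $(\theta\mu(\conc{\eps}),\theta\darcy{\eps})$ and $(\theta/\mu(\conc{\eps}),\theta\nabla\prss{\eps})$ --- in each case the first factor regular in time, the second regular in space (controlled divergence, resp.\ controlled curl) --- and yields, in $\distt$,
\[
\theta^2\mu(\conc{\eps})\darcy{\eps}\weakto\theta^2\overline{\mu}\,\U,
\qquad
\theta^2\frac{1}{\mu(\conc{\eps})}\nabla\prss{\eps}\weakto\theta^2\overline{a}\,\nabla p.
\]
On the other hand, $\K$ being a.e.\ symmetric and coercive, hence invertible, the pointwise Darcy law $\mu(\conc{\eps})\darcy{\eps}=-\K\nabla\prss{\eps}$ is equivalent to $\frac{1}{\mu(\conc{\eps})}\nabla\prss{\eps}=-\K^{-1}\darcy{\eps}$, so the left-hand sides above also converge (weak-$*$, hence in $\distt$) to $-\theta^2\K\nabla p$ and $-\theta^2\K^{-1}\U$. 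Uniqueness of distributional limits gives, a.e.\ on $\supp(\theta)\times(0,T)$,
\[
\overline{\mu}\,\U=-\K\nabla p,
\qquad
\overline{a}\,\nabla p=-\K^{-1}\U,
\]
and therefore $\U=-\overline{\mu}^{-1}\K\nabla p=-\overline{a}\,\K\nabla p$, so that $(\overline{\mu}\,\overline{a}-1)\,\K\nabla p=0$.

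\emph{Identifying the limit.} On $\{\K\nabla p=0\}$ both sides of \eqref{eq:measdarcylimit} vanish, so there is nothing to prove. On $\{\K\nabla p\neq0\}$ we have $\overline{\mu}\,\overline{a}=1$; since $\mu$ and $1/\mu$ are convex by \eqref{hyp:viscosity} and $\conc{\eps}\weakto c$, weak lower semicontinuity (Jensen) gives $\overline{\mu}\geq\mu(c)$ and $\overline{a}\geq1/\mu(c)$, whence $1=\overline{\mu}\,\overline{a}\geq\mu(c)\,\mu(c)^{-1}=1$ and equality holds in both inequalities. Strict convexity --- $\mu''>0$ and $(1/\mu)''>0$ on the compact interval $[0,1]$, hence uniformly --- then forces $\overline{\mu}=\mu(c)$ (indeed $\conc{\eps}\to c$ a.e.\ there). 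Thus $\U=-\frac{\K}{\mu(c)}\nabla p$ a.e.\ on $\supp(\theta)\times(0,T)$, and the exhaustion argument completes the proof.
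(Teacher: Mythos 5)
Your overall route is the paper's: localise with cutoffs $\theta$ supported away from $\discK\cup\supp(\nu)$, use the local $W^{2,r}$/$W^{1,r}$ bounds of Lemma \ref{lem:measgrisvardest} for the spatial regularity, invoke the $BV(0,T)$ compensated-compactness Lemma \ref{lem:cc} on the two pairs (Darcy velocity against $\mu(\conc{\eps})$, localised pressure gradient against $1/\mu(\conc{\eps})$), and finish by convexity of both $\mu$ and $1/\mu$. Your endgame --- writing out the Jensen/strict-convexity identification that the paper outsources to Amirat--Ziani --- is correct, as is the observation that both sides of \eqref{eq:measdarcylimit} vanish on $\{\K\nabla p=0\}$.

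The genuine gap sits exactly where you flag ``the main technical obstacle'': you never establish the uniform $BV(0,T)$ bound, and the direct route you sketch does not go through. The natural test function $\gamma'(\conc{\eps})\theta$ (with $\gamma=\mu$ or $1/\mu$) lies only in $L^2(0,T;H^1(\O))$, whereas admissible test functions for the concentration equation of a solution-by-approximation must belong to $L^2(0,T;W^{1,s}(\O))$ for some $s>2d$; equivalently, $\Phi\partial_t\conc{\eps}$ is only known to live in $L^2(0,T;(W^{1,s}(\O))')$, so neither the duality pairing $\langle\Phi\partial_t\conc{\eps},\gamma'(\conc{\eps})\theta\rangle$ nor the chain rule $\partial_t(\Phi\gamma(\conc{\eps}))=\Phi\gamma'(\conc{\eps})\partial_t\conc{\eps}$ is justified at the level of $(\prss{\eps},\darcy{\eps},\conc{\eps})$. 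This is precisely why the paper descends two levels, to the truncated problem $(\Tprss{\eps}{n}{k},\Tdarcy{\eps}{n}{k},\Tconc{\eps}{n}{k})$ with bounded tensor $\DD_\eps^k$ and $L^2$ source $\nuapprox$, where $\Phi\partial_t\Tconc{\eps}{n}{k}\in L^2(0,T;(H^1(\O))')$ and the test function $\gamma'(\Tconc{\eps}{n}{k})\psi$ is legitimate; the uniform-in-$(k,n,\eps)$ $L^1(0,T)$ bound on the time derivative is derived there from \eqref{eq:measenergyest}, \eqref{eq:measdarcyest} and \eqref{eq:measdest}, and then transported to $\conc{\eps}$ via the a.e.\ convergences \eqref{conv.Nk} and \eqref{conv.Neps} and lower semicontinuity of the $BV$ seminorm. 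Without this descent (or an equivalent regularisation) the hypothesis of Lemma \ref{lem:cc} that you need is unproven. Two smaller corrections: the time-derivative bound obtainable from the equation carries the porosity, so the correct choice of $\beta_\eps$ in Lemma \ref{lem:cc} is $\Phi\mu(\conc{\eps})$ (resp.\ $\Phi/\mu(\conc{\eps})$), with $\Phi$ divided out at the end using its uniform positivity and time-independence; and Lemma \ref{lem:cc} requires $\alpha_\eps$ bounded in $L^p(0,T;W^{1,a}(\O))$ --- the full spatial regularity that Lemma \ref{lem:measgrisvardest} supplies for $\theta\darcy{\eps}$ and $\tK\nabla(\theta\prss{\eps})$ --- not merely divergence or curl control, so the div--curl framing is a red herring.
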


\begin{proof}
By the assumptions \eqref{hyp:viscosity} on $\mu$, there exists $\ol{\mu}$, 
$\underline{\mu}\in L^\infty(\O\times(0,T))$ such that $\mu_\ast\le \underline{\mu},\ol{\mu}
\le \mu^\ast$ and, up to a subsequence, 
\begin{equation}\label{conv.cN.meas}
\mu(\conc{\eps}) \weakto \ol{\mu} \quad\mbox{ and }\quad\frac{1}{\mu(\conc{\eps})} 
\weakto \frac{1}{\underline{\mu}} \quad \mbox{ in $L^\infty(\O\times(0,T))$ weak-$\ast$}.
\end{equation}
\emph{Step 1: $BV$ estimates.}

Take $\psi\in C^\infty_c(\O)$. To apply Lemma \ref{lem:cc}, we must estimate the sequences
$\left(\int_\O \Phi\mu(\conc{\eps}(x,\cdot))\psi(x)\ud x\right)_{\eps>0}$ and
$\left(\int_\O \frac{\Phi}{\mu(\conc{\eps}(x,\cdot))}\psi(x)\ud x\right)_{\eps>0}$ 
in the space $BV(0,T)$. We first obtain these estimates on the solution
to the truncated problem from Definition \ref{def:sol.by.truncation}, and then
deduce the corresponding estimates on $(\prss{\eps},\darcy{\eps},\conc{\eps})$.

Replace $\nu$ and $a$ by $\nuapprox$ and $\aapprox$ from Definition \ref{def:sol.by.approx}.
Let $(\Tprss{\eps}{n}{k},\Tdarcy{\eps}{n}{k},\Tconc{\eps}{n}{k})$ be the solution to the corresponding truncated
problem, that is with $\DD_\eps$ replaced by $\DD_\eps^k$, defined by \eqref{def:truncatedD}.

Take $\gamma\in C^{2}([0,1])$, $\psi\in C^{\infty}(\ol{\O})$ 
and choose $\varphi=\gamma'(\Tconc{\eps}{n}{k}))\psi$
as a test function in \cite[Eq. (3.8)]{dt14}. Then for almost every $t\in(0,T)$ we have
\begin{equation}\label{eq:dtmuc}
\begin{gathered}
\langle (\Phi\partial_{t}\Tconc{\eps}{n}{k})(\cdot,t),\gamma'(\Tconc{\eps}{n}{k})(\cdot,t)\psi\rangle_{(H^{1})',H^{1}}\\
+ \int_{\O}\DD_\eps^k(x,\Tdarcy{\eps}{n}{k}(x,t))\nabla\Tconc{\eps}{n}{k}(x,t)
\cdot\nabla[\gamma'(\Tconc{\eps}{n}{k}(x,t))\psi(x)]\ud x \\
- \int_{\O}\Tconc{\eps}{n}{k}(x,t)\Tdarcy{\eps}{n}{k}(x,t)\cdot\nabla[\gamma'(\Tconc{\eps}{n}{k}(x,t))\psi(x)]\ud x\\
+ \int_{\O}\Tconc{\eps}{n}{k}(x,t)\gamma'(\Tconc{\eps}{n}{k}(x,t))\psi(x)b(x,t)\nuapprox(x)\ud x \\
= \int_{\O}\hat{c}(x,t)\gamma'(\Tconc{\eps}{n}{k}(x,t))\psi(x)\aapprox(x,t)\nuapprox(x)\ud x. 
\end{gathered}
\end{equation}
Since $\Phi\partial_{t}\Tconc{\eps}{n}{k}\in L^{2}(0,T;(H^{1}(\O))')$ and 
$\gamma'(\Tconc{\eps}{n}{k})\in L^{2}(0,T;H^{1}(\O))$,
the product $\Phi\partial_{t}\Tconc{\eps}{n}{k}\gamma'(\Tconc{\eps}{n}{k})$ is well-defined as an element of 
$L^{1}(0,T;(C^{\infty}(\ol{\O}))')$. Reasoning by density of smooth functions, we also
see that 
\begin{equation}\label{eq:dtmuc.dt}
\partial_{t}(\Phi\gamma(\Tconc{\eps}{n}{k}))
=\Phi\gamma'(\Tconc{\eps}{n}{k})\partial_{t}\Tconc{\eps}{n}{k}
\quad\mbox{ in $L^{1}(0,T;(C^{\infty}(\ol{\O}))')$}.
\end{equation}
Introducing $\zeta(s)=\int_0^s q\gamma''(q)\ud q$, write
\begin{align*}
\Tconc{\eps}{n}{k}\Tdarcy{\eps}{n}{k}\cdot\nabla\left[\gamma'(\Tconc{\eps}{n}{k})\psi\right]={}&
\left[\Tdarcy{\eps}{n}{k}\cdot\nabla\Tconc{\eps}{n}{k}\right]
\Tconc{\eps}{n}{k}\gamma''(\Tconc{\eps}{n}{k})\psi
+\left[\Tdarcy{\eps}{n}{k}\cdot\nabla \psi\right]\Tconc{\eps}{n}{k}\gamma'(\Tconc{\eps}{n}{k})\\
={}& \Tdarcy{\eps}{n}{k}\cdot\nabla(\zeta(\Tconc{\eps}{n}{k})\psi)
+\left[\Tdarcy{\eps}{n}{k}\cdot\nabla\psi\right]\left(\Tconc{\eps}{n}{k}\gamma'(\Tconc{\eps}{n}{k}) 
-\zeta(\Tconc{\eps}{n}{k})\right).
\end{align*}
The equation \eqref{eq:jdktell.pt} on $\Tdarcy{\eps}{n}{k}$ then shows that
\begin{multline*}
-\int_{\O}\Tconc{\eps}{n}{k}(x,t)\Tdarcy{\eps}{n}{k}(x,t)\cdot
\nabla[\gamma'(\Tconc{\eps}{n}{k}(x,t))\psi(x)]\ud x\\
=\int_{\O}\zeta(\Tconc{\eps}{n}{k}(x,t))(\aapprox-b)(x,t)\psi(x)\nuapprox(x)\ud x \\
-\int_{\O}\left[\Tdarcy{\eps}{n}{k}(x,t)\cdot\nabla\psi(x)\right]
\left(\Tconc{\eps}{n}{k}(x,t)\gamma'(\Tconc{\eps}{n}{k}(x,t))-\zeta(\Tconc{\eps}{n}{k}(x,t))\right)\ud x.
\end{multline*}
Substituted alongside \eqref{eq:dtmuc.dt} in \eqref{eq:dtmuc}, this gives
\begin{equation} \label{eq:dtmuc2}
	\begin{gathered}
	\langle \partial_{t}(\Phi\gamma(\Tconc{\eps}{n}{k})(\cdot, t)), 
	\psi\rangle_{(C^{\infty}(\ol{\O}))',C^{\infty}(\ol{\O})} \\
	+ \int_{\O}\gamma'(\Tconc{\eps}{n}{k}(x,t))\DD_\eps^k(x,\Tdarcy{\eps}{n}{k}(x,t))
	\nabla\Tconc{\eps}{n}{k}(x,t)\cdot\nabla\psi(x)\ud x \\
	+ \int_{\O}\left[\DD_\eps^k(x,\Tdarcy{\eps}{n}{k}(x,t))\nabla\Tconc{\eps}{n}{k}(x,t)
	\cdot\nabla\Tconc{\eps}{n}{k}(x,t)\right]\gamma''(\Tconc{\eps}{n}{k}(x,t))\psi(x)\ud x \\
	+\int_{\O}\zeta(\Tconc{\eps}{n}{k}(x,t))(\aapprox-b)(x,t)\psi(x)\nuapprox(x)\ud x\\
	-\int_{\O}\left[\Tdarcy{\eps}{n}{k}(x,t)\cdot\nabla\psi(x)\right]
	\left(\Tconc{\eps}{n}{k}(x,t)\gamma'(\Tconc{\eps}{n}{k}(x,t))
	-\zeta(\Tconc{\eps}{n}{k}(x,t))\right)\ud x\\
	+ \int_{\O}\Tconc{\eps}{n}{k}(x,t)\gamma'(\Tconc{\eps}{n}{k}(x,t))\psi(x)b(x,t)\nuapprox(x)\ud x\\
	= \int_{\O}\hat{c}(x,t)\gamma'(\Tconc{\eps}{n}{k}(x,t))\psi(x)\aapprox(x,t)\nuapprox(x)\ud x. 
	\end{gathered}
\end{equation}
All the integral terms can be bounded in the $L^1(0,T)$ norm by
using $0\leq\Tconc{\eps}{n}{k}\leq1$ and the estimates \eqref{eq:measenergyest}, 
\eqref{eq:measdarcyest} and \eqref{eq:measdest} for 
$(\Tprss{\eps}{n}{k},\Tdarcy{\eps}{n}{k},\Tconc{\eps}{n}{k})$ (and $\DD_\eps$
replaced by $\DD_\eps^k$), with constants that do not depend
on $k$, $n$ or $\eps$. This gives
\[
\seminorm{\int_\O (\Phi\gamma(\Tconc{\eps}{n}{k}))(x,\cdot)\psi(x)\ud x}_{BV(0,T)}
=\norm{\partial_{t}\int_\O (\Phi\gamma(\Tconc{\eps}{n}{k}))(x,\cdot)\psi(x)\ud x}_{L^1(0,T)}\\
\leq\cter{dtmuc1},
\]
where $\ctel{dtmuc1}$ may depend on $\psi$ and $\gamma$, but not on $k$, $n$ or $\eps$. 
Letting $k\to\infty$, $\Tconc{\eps}{n}{k}\to\conc[n]{\eps}$ almost-everywhere and so
$\seminorm{\int_\O \Phi\gamma(\conc[n]{\eps})(x,\cdot)\psi(x)\ud x}_{BV(0,T)}\leq\cter{dtmuc1}$.
By the convergence \eqref{conv.Neps} of $\conc[n]{\eps}$ to $\conc{\eps}$, we infer a
uniform-in-$\eps$ estimate in $BV(0,T)$ of $\int_\O \Phi \gamma(\conc{\eps})(x,\cdot)\psi(x)\ud x$.
Finally, set $\gamma=\mu$ or $\frac{1}{\mu}$ to see that
\begin{equation}\label{bound.BV}
\begin{gathered}
\left(\int_\O \Phi(x)\mu(\conc{\eps}(x,\cdot))\psi(x)\ud x\right)_{\eps>0}\mbox{ and }
\left(\int_\O \frac{\Phi(x)}{\mu(\conc{\eps}(x,\cdot))}\psi(x)\ud x\right)_{\eps>0}\\
\mbox{are bounded in $BV(0,T)$.}
\end{gathered}
\end{equation}

\emph{Step 2: passing to the limit on $\mu(\conc{\eps})\darcy{\eps}$.}

For $q\in[1,\frac{d}{d-1})$, the sequence $(\mu(\conc{\eps})\darcy{\eps})_{\eps>0}$ is 
bounded in $\Leb[d]{\infty}{q}$, so there exists $\ol{\mu\U} \in \Leb[d]{\infty}{q}$ 
such that, up to a subsequence,
\begin{equation}
\mu(\conc{\eps})\darcy{\eps}\weakto \ol{\mu\U} \quad\mbox{in $\Leb[d]{\infty}{q}$ 
weak-$\ast$ for all $q<\frac{d}{d-1}$}.
\label{eq:darcylim1.meas}
\end{equation}
The estimates \eqref{eq:measgrisest2} and \eqref{bound.BV} and the weak convergences 
\eqref{eq:measuntou} and \eqref{conv.cN.meas} enable us to apply Lemma \ref{lem:cc} 
with $p=2$, $a=r$ (for a fixed $r<2d/(2d-1)$), $\alpha_\eps =$ components of $\theta\darcy{\eps}$ 
and $\beta_\eps = \Phi\mu(\conc{\eps})$, to see that
\[
\theta\Phi\mu(\conc{\eps})\darcy{\eps} \weakto \theta\Phi\ol{\mu}\U \quad\mbox{in }\distt.
\]
Combined with \eqref{eq:darcylim1.meas} multiplied by $\theta\Phi$, this shows that
$\theta\Phi\ol{\mu\U}=\theta\Phi\ol{\mu}\U$ almost-everywhere.
This holds for any $\theta\in C^{\infty}_{c}(\O)$ with 
$\supp(\theta)\cap \discK = \emptyset$. By the freedom of $\theta$ and since $\Phi$ is uniformly
positive, so we conclude that $\ol{\mu\U}=\ol{\mu}\U$ almost-everywhere and hence
\begin{equation*}
\mu(\conc{\eps})\darcy{\eps} \weakto \ol{\mu}\U \quad\mbox{in $\Leb[d]{\infty}{q}$ weak-$\ast$ 
for all $q<\frac{d}{d-1}$}.  
\end{equation*}
Note that by \eqref{eq:measpntop}, $\mu(\conc{\eps})\darcy{\eps}=-\K\nabla\prss{\eps}\weakto -\K\nabla p$ in 
$\Leb[d]{\infty}{q}$ weak-$\ast$ for all $q<d/(d-1)$.
Thus for almost every $(x,t)\in\O\times(0,T)$,
\begin{equation}
\U (x,t)= -\frac{\K(x)}{\ol{\mu}(x,t)}\nabla p(x,t). \label{eq:uid1.meas}
\end{equation}
\emph{Step 3: identifying the limit of $\darcy{\eps}$.}

We seek to identify the limit of
\begin{equation} \label{eq:ccsplit}
\Phi\theta\darcy{\eps} = -\Phi\theta\frac{\tK}{\mu(\conc{\eps})}\nabla\prss{\eps}.
\end{equation}
Apply Lemma \ref{lem:cc} to the right-hand side, with 
$p=2$, $a=r$ (for a fixed $r<2d/(2d-1)$), $\alpha_\eps=$ components of $-\theta\tK\nabla\prss{\eps}$
and $\beta_\eps=\frac{\Phi}{\mu(\conc{\eps})}$. The estimates \eqref{eq:measgrisest1} and
\eqref{bound.BV} and the convergences \eqref{eq:measpntop} and \eqref{conv.cN.meas}
once again show that the assumptions of Lemma \ref{lem:cc} are satisfied.
We then pass to the limit on both sides of \eqref{eq:ccsplit} to obtain 
\[
\Phi\theta\U 
= -\Phi\theta\frac{\tK}{\underline{\mu}}\nabla p=-\Phi\theta\frac{\K}{\underline{\mu}}\nabla p.
\]
Again using the freedom of $\theta$ and the strict positivity of $\Phi$, for
almost every $(x,t)\in\O\times(0,T)$,
\begin{equation}
\U(x,t) = -\frac{\K(x)}{\underline{\mu}(x,t)}\nabla p(x,t). \label{eq:uid2.meas}
\end{equation}
Comparing \eqref{eq:uid1.meas} and \eqref{eq:uid2.meas}, for almost every $(x,t)\in\O\times(0,T)$,
\begin{equation*}
(\ol{\mu}\U)(x,t) = (\underline{\mu}\U)(x,t).
\end{equation*}
To conclude the proof of \eqref{eq:measdarcylimit}, argue exactly as in Amirat--Ziani \cite[Lemma 2.4]{az04}.
\end{proof}

\subsection{Strong convergence of the Darcy velocity} \label{ssec:strongconvU}
The strong convergence of the Darcy velocity is necessary to handle the convergence
of the diffusion-dispersion term, detailed in Section \ref{ssec:passlimitconc}. Strong
convergence of $(\darcy{\eps})_{\eps>0}$ begins with strong convergence of 
$(\nabla\prss{\eps})_{\eps>0}$.
When the source terms belong to $\Leb{\infty}{2}$, the key to proving the latter
is to use $\prss{\eps}-p$ as a test function in its own equation (see \cite[Lemma 2.5]{az04}). 
In the non-variational setting of measure source terms, this is no longer possible 
as $\prss{\eps}-p$ does not have the required regularity. We first need to excise the 
support of the measure using localisation functions $\theta$. While doing so, we 
create lower order terms in the right-hand side whose convergence needs to be assessed. 
This is the purpose of the following lemma, which establishes the strong convergence 
of $(\prss{\eps})_{\eps>0}$. Due to the lack of estimates on the time derivative of 
$(\prss{\eps})_{\eps>0}$, this result is not straightforward and requires careful use of 
the Aubin--Simon compactness lemma, alongside a uniqueness result for elliptic 
equations with source terms in $\cM_{+}(\O)\cap (W^{1,\ell}(\O))'$ for all $\ell>2$.

\begin{lemma}\label{lem:strconv.pN}
Assume \eqref{assum:general}. For $\eps>0$,
let $(\prss{\eps},\darcy{\eps},\conc{\eps})$ be a solution-by-approxi\-ma\-tion to \eqref{def:sol.L2N.meas}.
Assume that \eqref{eq:meascntoc}--\eqref{eq:measuntou} hold along
a subsequence. Then along the same subsequence,
\begin{equation}\label{eq:str.conv.pN.Lq}
\prss{\eps}\to p\mbox{ strongly in $\Leb{a}{q}$ for all $a<\infty$ and all $q<\frac{d}{d-1}$,}
\end{equation}
and for any $\theta\in C^\infty_c(\O)$ such that $\supp(\theta)\cap (\discK\cup\supp(\nu))=\emptyset$,
\begin{equation}\label{eq:str.conv.pN.L2}
\theta\prss{\eps}\to\theta p\mbox{ strongly in $\Leb{a}{2}$ for all $a<\infty$.}
\end{equation}
\end{lemma}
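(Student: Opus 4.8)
\emph{Strategy and the difficulty.} In the $L^{2}$-source setting of Amirat--Ziani \cite{az04} one proves $\prss{\eps}\to p$ strongly by testing the pressure equation with $\prss{\eps}-p$; here this is impossible, both because $p$ is too rough to be a test function and, more fundamentally, because we have \emph{no control on $\partial_{t}\prss{\eps}$}: the pressure equation is elliptic, parametrised in time only through the coefficient $\mu(\conc{\eps})^{-1}$, and $\conc{\eps}$ does not converge strongly as $\dm\to0$. I would therefore split the work into a \emph{spatial} compactness step, bought by the localisation of Lemma \ref{lem:measgrisvardest}, and a \emph{temporal} compactness step, recovered from the Aubin--Simon lemma with the help of auxiliary $BV(0,T)$ estimates; the limit is then pinned down using the compensated-compactness identifications already at hand together with the uniqueness result for the elliptic equation with $\cM_{+}(\O)\cap(W^{1,\ell}(\O))'$ data --- which is precisely why the regularity of $\nu$ in \eqref{hyp:source} is assumed, since it stands in for the missing $\partial_{t}p$ estimate.

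\emph{Localisation and the limiting equation.} Fix $\theta\in C^{\infty}_{c}(\O)$ with $\supp(\theta)\cap(\discK\cup\supp(\nu))=\emptyset$. By Lemma \ref{lem:measgrisvardest}, $(\theta\prss{\eps})_{\eps>0}$ is bounded in $L^{\infty}(0,T;H^{1}_{0}(B))\cap L^{2}(0,T;W^{2,r}(B))$, where $B\supset\ol{\O}$ is a ball and functions are extended by $0$. The key structural point is that the localised equation \eqref{eq:measgris1} has the \emph{time-independent} principal operator $-\dive(\tK\nabla\cdot)$, the coefficient $\mu(\conc{\eps})^{-1}$ having been moved into the lower-order right-hand side. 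Using $\mu(\conc{\eps})\darcy{\eps}=-\K\nabla\prss{\eps}$ and, on $\supp(\theta)$ where $\theta\nu=0$ (hence $\dive\darcy{\eps}=0$), $\theta\mu'(\conc{\eps})\darcy{\eps}\cdot\nabla\conc{\eps}=\theta\dive(\mu(\conc{\eps})\darcy{\eps})$, I would identify the weak limit of each term on the right-hand side of \eqref{eq:measgris1} via the compensated-compactness Lemma \ref{lem:cc}, exactly as in Lemma \ref{lem:measdarcylimit} (using $\mu(\conc{\eps})\darcy{\eps}\weakto-\K\nabla p$ from that proof, together with \eqref{eq:measgrisest1}--\eqref{eq:measgrisest2} and the $BV(0,T)$ bound below). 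This shows $\theta\prss{\eps}\weakto\theta p$ solves the \emph{fixed} localised elliptic problem obtained in the limit, so that any subsequential limit of $(\theta\prss{\eps})$ --- in particular the strong one produced next --- must coincide with $\theta p$; here one invokes uniqueness, applied to the difference of $p$ (which already satisfies the limiting pressure equation \eqref{eq:measell} by Lemma \ref{lem:measdarcylimit} and the passage to the limit in \eqref{eq:jdktell}) with that limit, to exclude spurious limits in the absence of a $\partial_{t}p$ bound.

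\emph{Auxiliary $BV(0,T)$ estimates, Aubin--Simon, and conclusion.} On the truncated solutions $(\Tprss{\eps}{n}{k},\Tdarcy{\eps}{n}{k},\Tconc{\eps}{n}{k})$ of Definition \ref{def:sol.by.truncation} --- for which $\Phi\partial_{t}\Tconc{\eps}{n}{k}\in L^{2}(0,T;(H^{1}(\O))')$, making $H^{1}(\O)$ test functions built from $\theta$, $\theta\Tprss{\eps}{n}{k}$ and from $\mu$, $1/\mu$ admissible in \cite[Eq. (3.8)]{dt14} --- I would argue as in Step 1 of Lemma \ref{lem:measdarcylimit} to get, for each fixed smooth $\psi$, a uniform-in-$(k,n,\eps)$ bound in $BV(0,T)$ of the scalar maps $t\mapsto\int_{\O}(\cdots)(x,t)\psi(x)\ud x$ (in particular \eqref{bound.BV}), then pass $k\to\infty$ and $n\to\infty$ as in Definitions \ref{def:sol.by.truncation}--\ref{def:sol.by.approx} to transfer them to $(\prss{\eps},\darcy{\eps},\conc{\eps})$. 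Transported through the fixed bounded inverse of $-\dive(\tK\nabla\cdot)$, these furnish the time-modulus, in a weak norm, needed to apply the Aubin--Simon lemma with $H^{1}_{0}(B)\hookrightarrow\hookrightarrow L^{2}(B)$; this yields relative compactness of $(\theta\prss{\eps})$ in $L^{2}(0,T;L^{2}(B))$, whose limit is $\theta p$ by the previous step, and the $L^{\infty}(0,T;H^{1}_{0}(B))$ bound plus a standard interpolation-of-compactness argument upgrades this to $\theta\prss{\eps}\to\theta p$ in $\Leb{a}{2}$ for all $a<\infty$, which is \eqref{eq:str.conv.pN.L2}. For \eqref{eq:str.conv.pN.Lq}, since $\discK\cup\supp(\nu)$ is Lebesgue-negligible, cover it by an open $V\subset\O$ of arbitrarily small measure, take $\theta\equiv1$ on $\O\setminus V$, apply \eqref{eq:str.conv.pN.L2} there, and on $V$ control $\norm{\prss{\eps}-p}_{\Leb{a}{q}(V)}$ by H\"older's inequality using $|V|$ small and the uniform bound \eqref{eq:measdarcyest} in $L^{\infty}(0,T;W^{1,q}(\O))\hookrightarrow L^{\infty}(0,T;L^{q^{\ast}}(\O))$ with $q^{\ast}>q$; letting $|V|\to0$ gives $\prss{\eps}\to p$ in $\Leb{a}{q}$ for all $a<\infty$, $q<\frac{d}{d-1}$.

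\emph{Main obstacle.} The delicate point is making Aubin--Simon deliver time-compactness of $(\prss{\eps})$ despite the complete absence of a bound on $\partial_{t}\prss{\eps}$: this is exactly what forces the localisation (which trades the badly-behaved time-dependent coefficient for a fixed operator plus lower-order terms whose temporal oscillations, tested against smooth functions, are controllable in $BV(0,T)$) and the sharp measure-data uniqueness result, the latter replacing the $\partial_{t}p$ estimate one would normally use to identify the limit.
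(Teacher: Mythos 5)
There is a genuine gap at the heart of your argument: the Aubin--Simon step applied to $(\theta\prss{\eps})_{\eps>0}$. You claim that the $BV(0,T)$ bounds \eqref{bound.BV}, ``transported through the fixed bounded inverse of $-\dive(\tK\nabla\cdot)$'', furnish the time-modulus needed for Aubin--Simon. But those $BV$ bounds control only scalar functionals of the \emph{concentration}, namely $t\mapsto\int_\O\Phi\gamma(\conc{\eps})\psi$ for smooth $\gamma,\psi$; they say nothing about the time variations of the right-hand side of the localised equation \eqref{eq:measgris1}, which consists of $-\prss{\eps}\dive(\tK\nabla\theta)$, $\theta\mu'(\conc{\eps})\darcy{\eps}\cdot\nabla\conc{\eps}$ and $2\mu(\conc{\eps})\darcy{\eps}\cdot\nabla\theta$. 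Every one of these terms contains $\prss{\eps}$ or $\nabla\prss{\eps}$ (your own rewriting $\theta\mu'(\conc{\eps})\darcy{\eps}\cdot\nabla\conc{\eps}=\theta\dive(\mu(\conc{\eps})\darcy{\eps})=-\theta\dive(\K\nabla\prss{\eps})$ makes this explicit), so controlling their temporal oscillations is exactly as hard as controlling $\partial_t\prss{\eps}$ --- the very thing you correctly identify as unavailable. The inversion of the fixed operator therefore buys nothing: you would be deducing time-compactness of the pressure from time-compactness of a quantity built from the pressure. The identification of the limit (your compensated-compactness paragraph) is not where the difficulty lies --- any strong subsequential limit of $\theta\prss{\eps}$ is forced to be $\theta p$ already by \eqref{eq:measpntop} --- the difficulty is producing strong precompactness in the first place, and your argument does not do so.

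The paper's route is structurally different precisely to avoid this. Aubin--Simon is applied not to the pressure but to the \emph{coefficient} $\Phi/\mu(\conc{\eps})$, for which a genuine bound on $\partial_t(\Phi/\mu(\conc{\eps}))$ in $L^1(0,T;(W^{1,s}(\O))')$ does follow from the concentration equation (take $\gamma=1/\mu$ in \eqref{eq:dtmuc2}). This yields, for almost every fixed $t$, strong convergence of $1/\mu(\conc{\eps}(\cdot,t))$ in $(W^{1,\delta}(\O))'$. Then, for each such $t$, one extracts a weakly convergent subsequence of $(\prss{\eps}(\cdot,t))$, passes to the limit in the pointwise elliptic equation \eqref{eq:jdktell.pt} (splitting with the cutoffs $\theta_j$ and using the $W^{2,r}$ bounds of Lemma \ref{lem:measgrisvardest} to pair against the weakly-$(W^{1,\delta})'$-convergent coefficient), and invokes the Fabrie--Gallou\"et uniqueness result for \eqref{sol:limit.ell} to conclude that the limit $P(t)$ is independent of the subsequence; dominated convergence in $t$ then gives \eqref{eq:str.conv.pN.Lq} directly, and \eqref{eq:str.conv.pN.L2} follows by interpolating between $\Leb{A}{1}$ and the $L^\infty(0,T;H^1)$ bound \eqref{eq:measgrisest0}. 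In short: time-compactness of the coefficient plus per-$t$ elliptic uniqueness replaces the missing $\partial_t\prss{\eps}$ estimate; there is no Aubin--Simon on the pressure itself. (Your final covering argument deducing \eqref{eq:str.conv.pN.Lq} from \eqref{eq:str.conv.pN.L2} would be fine if the $L^2$ convergence were established, but it is not needed in the paper's order of proof.)
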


\begin{proof}
\emph{Step 1: almost-everywhere convergence of $1/\mu(\conc{\eps})$.}

Our aim is to apply an Aubin--Simon lemma to $1/\mu(\conc{\eps})$. We can only estimate
the time derivative of this function when multiplied by the porosity $\Phi$. 
To eliminate this factor, we use a similar trick as in our previous work \cite[Section 3.3]{dt14}.
Let $\delta\in (1,\infty)$ and set 
$\frac{1}{\Phi}W^{1,\delta}(\O)=\{v\in L^\delta(\O)\,:\,\Phi v\in W^{1,\delta}(\O)\}$,
with norm $\norm{v}_{\frac{1}{\Phi}W^{1,\delta}(\O)}=\norm{\Phi v}_{W^{1,\delta}(\O)}$.
By the Rellich theorem, $\frac{1}{\Phi}W^{1,\delta}(\O)$ is compactly and densely embedded in
$L^\delta(\O)$. It follows that $L^{\delta'}(\O)$ is compactly embedded in 
$(\frac{1}{\Phi}W^{1,\delta}(\O))'$.

Take $(\Tprss{\eps}{n}{k},\Tdarcy{\eps}{n}{k},\Tconc{\eps}{n}{k})$ as in the proof 
of Lemma \ref{lem:measdarcylimit} and fix $s>2d$. 
The family $(\Tprss{\eps}{n}{k},\Tdarcy{\eps}{n}{k},\Tconc{\eps}{n}{k}))_{\eps>0}^{n,k\in\NN}$ 
satisfies estimates \eqref{eq:measenergyest}--\eqref{eq:measdest},
with constants not depending on $n$, $k$ or $\eps$. Used in \eqref{eq:dtmuc2} 
applied to $\gamma=1/\mu$, these estimates give
a uniform bound on $\partial_t (\Phi/\mu(\Tconc{\eps}{n}{k}))$ in $L^1(0,T;(W^{1,s}(\O))')$
and therefore in $L^1(0,T;(W^{1,s}(\O))'+(\frac{1}{\Phi}W^{1,\delta}(\O))')$.

Now $(\Phi/\mu(\Tconc{\eps}{n}{k}))_{\eps>0}^{n,k\in\NN}$ is bounded 
in $L^\infty(\O\times(0,T))$ and therefore in $L^1(0,T;L^{\delta'}(\O))$, with $L^{\delta'}(\O)$ compactly
embedded in $(\frac{1}{\Phi}W^{1,\delta}(\O))'$.
Classical Aubin--Simon lemmas show that $A=\{\Phi/\mu(\Tconc{\eps}{n}{k})\,:\,\eps>0;\,n,k\in\NN\}$
is relatively compact in the space $L^1(0,T;(\frac{1}{\Phi}W^{1,\delta}(\O))')$. Write $\ol{A}$ 
for the (compact) closure of $A$ in this space.

By compactness of $\ol{A}$, the limit in $\distt$ of any sequence in $A$ also belongs to $\ol{A}$.
As $k\to\infty$ and $n\to\infty$ (in that order), we know that $\Tconc{\eps}{n}{k}\to\conc{\eps}$ 
almost-everywhere on $\O\times (0,T)$.
Hence $\Phi/\mu(\Tconc{\eps}{n}{k})\to \Phi/\mu(\conc{\eps})$ almost-everywhere on 
$\O\times (0,T)$, and thus in $\distt$ since these
functions are uniformly bounded in $L^\infty(\O\times (0,T))$. As a consequence, 
$(\Phi/\mu(\conc{\eps}))_{\eps>0}$ is
a sequence in $\ol{A}$ and thus, up to a subsequence, converges strongly in 
$L^1(0,T;(\frac{1}{\Phi}W^{1,\delta}(\O))')$.
By \eqref{conv.cN.meas}, the limit of this sequence must be $\Phi/\underline{\mu}$.
Extracting another subsequence, we can therefore assert that, as $\eps\to0$,
for almost every $t\in (0,T)$,
\[
\frac{\Phi}{\mu(\conc{\eps}(\cdot,t))}\to\frac{\Phi}{\underline{\mu}(\cdot,t)}\mbox{ strongly
in $\left(\frac{1}{\Phi}W^{1,\delta}(\O)\right)'$}.
\]
The definition of $\frac{1}{\Phi}W^{1,\delta}(\O)$ shows that, for all $Z\in L^\infty(\O)$,
$\norm{\Phi Z}_{(\frac{1}{\Phi}W^{1,\delta}(\O))'}
=\norm{Z}_{(W^{1,\delta}(\O))'}$. Then, along a 
subsequence as $\eps\to0$, for almost every $t\in (0,T)$
\begin{equation}\label{ae.conv.mucN}
\frac{1}{\mu(\conc{\eps}(\cdot,t))}\to \frac{1}{\underline{\mu}(\cdot,t)}\mbox{ strongly
in $(W^{1,\delta}(\O))'$, for all $\delta\in (1, \infty)$}.
\end{equation}

\emph{Step 2: proof of \eqref{eq:str.conv.pN.Lq}.}

From here on, we work with the subsequence along which \eqref{ae.conv.mucN} holds and explicitly denote
any other extraction of a subsequence.
Let $A_1$ be the set of $t\in (0,T)$ such that \eqref{ae.conv.mucN} holds, and
$A_2$ be the set of $t\in (0,T)$ such that, for all $q<2$,
$(\prss{\eps}(\cdot,t))_{\epsilon>0}$ is bounded in $W^{1,q}_\star(\O)$ (see Remark \ref{rem:W1l}).
Take functions $(\theta_j)_{j\ge 3}$ in $C^\infty_c(\O)$
such that $\supp(\theta_j)\cap (\discK\cup\supp(\nu))=\emptyset$, $0\le \theta_j\le 1$ and
$\theta_j\to 1$ almost-everywhere on $\O$ as $j\to\infty$.
Apply Lemma \ref{pt.bound} to $(\theta_j\prss{\eps})_{\eps>0}$ and
$E=W^{2,r}(\O)$ (see \eqref{eq:measgrisest1}), and let $A_j$ be the set
of $t\in (0,T)$ that satisfy the conclusion of the lemma.
The complement of $A=\cap_{j\in\NN}A_j$ has a zero measure.

Fix $t\in A$.
Owing to \eqref{hyp:source}, as in \cite[Step 3, proof of Theorem 2.1]{fg00} we see 
that $(a(\cdot,t)-b(\cdot,t))\nu\in (W^{1,\ell}(\O))'$ for all $\ell>2$. 
Hence by \cite[Proposition 3.2]{fg00}, there is a unique solution to
$-\dive(\frac{\K}{\underline{\mu}}\nabla P(t))=(a(\cdot,t)-b(\cdot,t))\nu$ with zero average and
homogeneous Neumann conditions in the sense
\begin{equation}\label{sol:limit.ell}
\begin{aligned}
&P(\cdot,t)\in \bigcap_{q<2}W^{1,q}_\star(\O)\mbox{ and }\forall \psi\in C^\infty(\ol{\O}),\\
&\int_\O \frac{\K(x)}{\underline{\mu}(x,t)}\nabla P(x,t)\cdot\nabla\psi(x)\ud x
=\int_\O (a-b)(x,t)\psi(x)\ud\nu(x).
\end{aligned}
\end{equation}
Note that the formulation in Fabrie--Gallou\"et \cite{fg00} is written for 
$\psi\in \bigcup_{z>d} W^{1,z}(\O)$ which, by density, is equivalent to the 
formulation above.

We first prove that, up to a subsequence (depending on $t$), $\prss{\eps}(t)\to P(t)$ 
strongly in $L^q(\O)$ for all $q<2$.
By choice of $t\in A$, there exists
a subsequence $(\prss{\eps'}(t))_{\eps'>0}$ that converges weakly 
$W^{1,q}_\star(\O)$ for all $q<2$ --- and strongly in the corresponding $L^q(\O)$ spaces --- 
toward some function $\mathcal P$. Recalling the conclusion of Lemma \ref{pt.bound}, we can
also assume that this subsequence satisfies
\[
(\theta_j\prss{\eps'}(\cdot,t))_{\eps'>0}
\mbox{ is bounded in $W^{2,r}(\O)$ for all $r<\frac{2d}{2d-1}$ and all $j\in\NN$},
\]
which shows that, for every $j\in\NN$, 
$\theta_j \prss{\eps'}(\cdot,t)\weakto \theta_j\mathcal P$ in
$W^{2,r}(\O)$ for all $r<\frac{2d}{2d-1}$. Substitute $\psi\in C^\infty(\ol{\O})$ into \eqref{eq:jdktell.pt}.
Defining $\tK_j$ by \eqref{def:tK}, with $\rho=\rho_j$ associated with $\theta_j$,
this gives (dropping the explicit mention of the $x$ variable)
\begin{align}
\int_\O (a-b)&(t)\psi\ud \nu=
\int_\O \frac{\K}{\mu(\conc{\eps'}(t))}\nabla\prss{\eps'}(t)\cdot\nabla\psi\ud x\nonumber\\
={}&\int_\O \frac{1}{\mu(\conc{\eps'}(t))}\theta_j \tK_j\nabla\prss{\eps'}(t)\cdot\nabla\psi\ud x
+\int_\O (1-\theta_j)\frac{\K}{\mu(\conc{\eps'}(t))}\nabla\prss{\eps'}(t)\cdot\nabla\psi\ud x\nonumber\\
={}&I_{\eps',j,1}+I_{\eps',j,2}.
\label{pt.conv.pN.key}
\end{align}
The tensor $\tK_j$ is Lipschitz continuous and, as $\eps'\to0$,
$\theta_j\nabla\prss{\eps'}(t)\weakto \theta_j \nabla\mathcal P$ in $W^{1,r}(\O)$ for all $r<\frac{2d}{2d-1}$.
Hence, the convergence \eqref{ae.conv.mucN} (which holds since $t\in A$) gives
\begin{equation}\label{conv.I1}
\lim_{\eps'\to0} I_{\eps',j,1}=\int_\O \frac{1}{\underline{\mu}(t)}\theta_j \tK_j\nabla\mathcal P\cdot\nabla\psi\ud x
=\int_\O \frac{\K}{\underline{\mu}(t)}\nabla \mathcal P\cdot\nabla\psi \ud x-J_j,
\end{equation}
where 
\[
J_j=\int_\O \frac{\K}{\underline{\mu}(t)}(1-\theta_j)\nabla \mathcal P\cdot\nabla\psi.
\]
Fix $q_0\in \left(1,\frac{d}{d-1}\right)$. Since $(\prss{\eps'}(t))_{\eps'>0}$ is 
bounded in $W^{1,q_0}(\O)$ and $\mathcal P\in W^{1,q_0}(\O)$, we find
$\ctel{not.dep.k}$ not depending on $j$ or $\eps$ such that
\[
|I_{\eps',j,2}|+|J_j|\le \cter{not.dep.k}\norm{1-\theta_j}_{L^{q_0'}(\O)}.
\]
Plugged into \eqref{pt.conv.pN.key}, this gives
\begin{multline}\label{end.ptconv}
\left|\int_\O (a(t)-b(t))\psi\ud \nu-\int_\O \frac{\K}{\underline{\mu}(t)}
\nabla \mathcal P\cdot\nabla\psi \ud x\right|\\
\le \left|I_{\eps',j,1}-\left(\int_\O \frac{\K}{\underline{\mu}(t)}
\nabla \mathcal P\cdot\nabla\psi \ud x-J_j\right)\right|
+ \cter{not.dep.k}\norm{1-\theta_j}_{L^{q_0'}(\O)}.
\end{multline}
Since $q_0'<\infty$, the properties of $\theta_j$ show that $\norm{1-\theta_j}_{L^{q_0'}(\O)}\to 0$ as $j\to\infty$.
Then thanks to \eqref{conv.I1}, taking the superior limit as $\eps'\to0$ and then the limit as $j\to\infty$
of \eqref{end.ptconv} shows that $\mathcal P$ satisfies \eqref{sol:limit.ell}.

We infer that $\mathcal P=P(t)$ and thus that the limit of $(\prss{\eps'}(t))_{\eps'>0}$ 
does not depend on the chosen subsequence.
In other words, the whole sequence $(\prss{\eps}(t))_{\eps>0}$ converges in $L^q(\O)$ to $P(t)$, for 
almost every $t\in (0,T)$.
By the bound in $L^\infty(0,T;L^q(\O))$ on $(\prss{\eps})_{\eps>0}$ given by \eqref{eq:measdarcyest},
the dominated convergence theorem shows that $\prss{\eps}\to P$ strongly in $\Leb{a}{q}$ for all $a<\infty$.
The convergence \eqref{eq:measpntop} imposes $P=p$ and the proof of \eqref{eq:str.conv.pN.Lq} is complete.

\emph{Step 3: proof of \eqref{eq:str.conv.pN.L2}.}

This follows from the previous convergence by a simple interpolation technique. Let $\tau\in (0,1)$ be such that
$\frac{1}{2}=\frac{\tau}{1}+\frac{1-\tau}{2^*}$, where $2^*>2$ is a Sobolev exponent (that is, such that $H^1(\O)
\hookrightarrow L^{2^*}(\O)$).
Fix $a<\infty$ and take $A\in (a,\infty)$ such that
$\frac{1}{a}=\frac{\tau}{A}+\frac{1-\tau}{\infty}$. Then
\begin{align*}
\norm{\theta\prss{\eps}-\theta p}_{\Leb{a}{2}}\le{}&
\norm{\theta\prss{\eps}-\theta p}_{\Leb{A}{1}}^\tau
\norm{\theta\prss{\eps}-\theta p}_{\Leb{\infty}{2^*}}^{1-\tau}\\
\le{}&
\norm{\theta}_{\infty}\norm{\prss{\eps}-p}_{\Leb{A}{1}}^\tau
\norm{\theta\prss{\eps}-\theta p}_{\Leb{\infty}{2^*}}^{1-\tau}.
\end{align*}
The second term in the right-hand side converges to $0$ by \eqref{eq:str.conv.pN.Lq}, and the third
term is bounded by \eqref{eq:measgrisest0}, which, combined with \eqref{eq:measpntop}, proves
in particular that $\theta p\in L^\infty(0,T;H^1(\O))$.
\end{proof}

The next lemma highlights an almost-everywhere convergence property of $(\conc{\eps})_{\eps>0}$
that is critical for obtaining strong convergence of $(\nabla\prss{\eps})_{\eps>0}$.

\begin{lemma}\label{lem:ae.cN.meas}
Assume \eqref{assum:general} and for $\eps>0$, let $(\prss{\eps},\darcy{\eps},\conc{\eps})$ be 
a solution-by-approxi\-ma\-tion to \eqref{def:sol.L2N.meas}.
Assume that \eqref{eq:meascntoc}--\eqref{eq:measuntou} hold along
a subsequence. Then, up to another subsequence,
\begin{equation}\label{eq:aeconv.c.meas}
\conc{\eps}\to c\quad\mbox{a.e.\ on $\{(x,t)\in\O\times(0,T)\,:\,|\U(x,t)|\not=0\}$}.
\end{equation}
\end{lemma}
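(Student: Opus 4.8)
The plan is to derive \eqref{eq:aeconv.c.meas} from the two weak-$\ast$ convergences \eqref{conv.cN.meas} and the limit identifications already obtained in Lemma \ref{lem:measdarcylimit}, \emph{without} invoking the energy estimate on $|\darcy{\eps}|^{1/2}\nabla\conc{\eps}$. Set $B=\{(x,t)\in\O\times(0,T)\,:\,|\U(x,t)|\not=0\}$. Recall from \eqref{conv.cN.meas} that $\mu(\conc{\eps})\weakto\ol{\mu}$ and $1/\mu(\conc{\eps})\weakto1/\underline{\mu}$ weak-$\ast$ in $L^\infty(\O\times(0,T))$, with $\mu_\ast\le\underline{\mu},\ol{\mu}\le\mu^\ast$. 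The proof of Lemma \ref{lem:measdarcylimit} furnishes $\ol{\mu}\U=\underline{\mu}\U$ a.e., hence $\ol{\mu}=\underline{\mu}$ a.e.\ on $B$; combining \eqref{eq:measdarcylimit} with \eqref{eq:uid1.meas} and the invertibility of $\K$ then identifies the common value as $\mu(c)$, i.e.\ $\ol{\mu}=\underline{\mu}=\mu(c)$ a.e.\ on $B$.

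First I would restrict the weak-$\ast$ convergences to $B$ (testing against $L^1(\O\times(0,T))$ functions that vanish outside $B$), obtaining $\mu(\conc{\eps})\weakto\mu(c)$ and $1/\mu(\conc{\eps})\weakto1/\mu(c)$ weak-$\ast$ in $L^\infty(B)$. Next comes a Minty-type step: from the algebraic identity
\[
\int_B\big(\mu(\conc{\eps})-\mu(c)\big)\Big(\tfrac{1}{\mu(\conc{\eps})}-\tfrac{1}{\mu(c)}\Big)\ud x\ud t
= -\int_B\frac{\big(\mu(\conc{\eps})-\mu(c)\big)^2}{\mu(\conc{\eps})\,\mu(c)}\ud x\ud t,
\]
the left-hand side equals $2|B|-\int_B\mu(\conc{\eps})/\mu(c)\ud x\ud t-\int_B\mu(c)/\mu(\conc{\eps})\ud x\ud t$, which tends to $0$ by the two weak-$\ast$ convergences (using $\mu(c),1/\mu(c)\in L^\infty(B)\subset L^1(B)$), while the right-hand side is bounded above by $-(\mu^\ast)^{-2}\int_B(\mu(\conc{\eps})-\mu(c))^2\ud x\ud t$. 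Hence $\mu(\conc{\eps})\to\mu(c)$ strongly in $L^2(B)$ and, up to a further subsequence, a.e.\ on $B$.

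It then remains to pass from the a.e.\ convergence of $\mu(\conc{\eps})$ to that of $\conc{\eps}$, which is where the full strength of \eqref{hyp:viscosity} enters. The joint strict convexity of $\mu$ and $1/\mu$ forces $\mu$ to be strictly monotone on $[0,1]$: were $\mu$ not injective it would, being strictly convex, attain an interior minimum, so $1/\mu$ would attain an interior maximum, contradicting the strict convexity of $1/\mu$. Thus $\mu$ is a homeomorphism from $[0,1]$ onto $[\mu_\ast,\mu^\ast]$, and $\conc{\eps}=\mu^{-1}(\mu(\conc{\eps}))\to\mu^{-1}(\mu(c))=c$ a.e.\ on $B$, which is exactly \eqref{eq:aeconv.c.meas}.

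The main obstacle is precisely this last transfer: injectivity of $\mu$ is not assumed outright but must be extracted from the two convexity hypotheses, and it is indispensable --- without it, the a.e.\ limit of $\mu(\conc{\eps})$ would only confine the cluster values of $\conc{\eps}$ to a level set of $\mu$ rather than pin them down. All the remaining ingredients are routine manipulations of weak-$\ast$ convergence under the two-sided bound $\mu_\ast\le\mu\le\mu^\ast$, together with the limit identifications already secured in Lemma \ref{lem:measdarcylimit}.
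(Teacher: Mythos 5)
Your argument is correct, but it follows a genuinely different route from the paper's. The paper exploits the strict convexity of $\mu$ \emph{quantitatively}: with $m$ a positive lower bound for $\mu''$ on $[0,1]$, the Taylor--Lagrange inequality $\mu(\conc{\eps})-\mu(c)-(\conc{\eps}-c)\mu'(c)\ge m(\conc{\eps}-c)^2$ is multiplied by $|\U|$ and integrated; passing to the limit with \eqref{eq:meascntoc} and \eqref{conv.cN.meas} and using the identity $\ol{\mu}\U=\mu(c)\U$ (from \eqref{eq:measdarcylimit} and \eqref{eq:uid1.meas}) shows directly that $(\conc{\eps}-c)^2|\U|\to0$ in $L^1(\O\times(0,T))$, whence the a.e.\ convergence on $\{|\U|\neq0\}$. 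You instead first identify $\ol{\mu}=\underline{\mu}=\mu(c)$ on $B=\{|\U|\neq0\}$ (legitimate, and essentially contained in the proof of Lemma \ref{lem:measdarcylimit}), then run a Minty-type duality between the weak-$\ast$ limits of $\mu(\conc{\eps})$ and $1/\mu(\conc{\eps})$ to upgrade to strong $L^2(B)$ convergence of $\mu(\conc{\eps})$, and finally invert $\mu$. The two approaches trade off which consequence of \eqref{hyp:viscosity} does the work: the paper needs the uniform positivity of $\mu''$ but never needs $\mu$ to be injective, whereas you need no quantitative convexity in this step but must extract injectivity of $\mu$ from the joint strict convexity of $\mu$ and $1/\mu$ (your interior-extremum argument for this is valid; alternatively, $(1/\mu)''>0$ forces $2(\mu')^2>\mu\mu''>0$, so $\mu'$ never vanishes). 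Your route is arguably more robust in that it only uses the pairing of the two weak-$\ast$ limits against each other, a structure already set up for Lemma \ref{lem:measdarcylimit}; the paper's is shorter and avoids the inversion step. Both correctly deliver the convergence only up to a further subsequence, as the statement permits.
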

 
\begin{proof}
By strict convexity of $\mu$ on $[0,1]$, $m:=2\min_{[0,1]}\mu''$ is strictly positive.
Lagrange's Remainder Theorem therefore gives
$\mu(\conc{\eps})-\mu(c)-(\conc{\eps}-c)\mu'(c) \geq m(\conc{\eps}-c)^{2}$.
Multiplying by $|\U|\geq 0$ and integrating yields
\begin{multline*}
\int_0^T\int_\O\left(\mu(\conc{\eps}(x,t))|\U(x,t)|- \mu(c(x,t))|\U(x,t)|\right)\ud x\ud t\\
- \int_0^T\int_\O(\conc{\eps}- c)(x,t)\mu'(c(x,t))|\U(x,t)|\ud x\ud t \\
\quad \ge m\int_0^T\int_\O (\conc{\eps}(x,t)-c(x,t))^2|\U(x,t)|\ud x\ud t.
\end{multline*}
By \eqref{eq:meascntoc} and \eqref{conv.cN.meas}, since $|\U|\in L^1(\O\times(0,T))$,
we pass to the limit in the left-hand side to obtain
\begin{multline*}
\int_0^T\int_\O \left(\ol{\mu}(x,t)|\U(x,t)|- \mu(c(x,t))|\U(x,t)|\right)\ud x\ud t\\
\ge m \limsup_{\eps\to0}\int_0^T\int_\O (\conc{\eps}(x,t)-c(x,t))^2|\U(x,t)|\ud x\ud t.
\end{multline*}
Thanks to \eqref{eq:measdarcylimit} and \eqref{eq:uid1.meas} we have $\ol{\mu}\U=\mu(c)\U$. Taking
the norms, we see that left-hand side vanishes. This shows that
$(\conc{\eps}-c)^2|\U|\to 0$ in $L^1(\O\times(0,T))$, and therefore almost-everywhere on $\O\times (0,T)$
up to a subsequence.
\end{proof}

\begin{remark}
The main purpose of this almost-everywhere convergence of $(\conc{\eps})_{\eps>0}$ is
to prove the convergence of $(\darcy{\eps})_{\eps>0}$.

Lemma \ref{lem:ae.cN.meas} is no longer valid if $\mu$ is constant.
However, in that case, the system is decoupled: the pressure does not depend on
the concentration (and then does not even depend on $\eps$),
and there are no difficulties with the convergence
of $\darcy{\eps}$ as it does not depend on $\eps$.
\end{remark}

\begin{lemma}\label{lem:gradstrong.meas}
Assume \eqref{assum:general}. For $\eps>0$, let $(\prss{\eps},\darcy{\eps},\conc{\eps})$ be 
a solution-by-approxi\-ma\-tion to \eqref{def:sol.L2N.meas}.
Assume that \eqref{eq:meascntoc}--\eqref{eq:measuntou} hold along
a subsequence. Then along the same subsequence,
\begin{equation}\label{eq:gradpstrong.meas}
\nabla\prss{\eps}\to\nabla p\quad\mbox{ strongly in $\Leb[d]{a}{q}$ for all $a<\infty$ and all $q<\frac{d}{d-1}$.}
\end{equation}
\end{lemma}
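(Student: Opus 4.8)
The plan is to adapt the classical ``test function $\prss{\eps}-p$'' argument to the measure setting by localising with the cutoff functions $\theta_j$, then carefully handle the lower-order terms this localisation produces using the strong convergences already established in Lemma~\ref{lem:strconv.pN}. First I would localise: using the equation \eqref{eq:thetap3} (equivalently \eqref{eq:measgris1}) satisfied by $\theta\prss{\eps}$, and the fact that $P=p$ satisfies the limit equation \eqref{sol:limit.ell}, I would subtract a suitably localised version of the limit equation from the localised $\eps$-equation and test with $\theta(\prss{\eps}-p)$ (which now \emph{does} have $H^1$ regularity, since $\theta\prss{\eps}$ and $\theta p$ are bounded in $L^\infty(0,T;H^1(\O))$ by \eqref{eq:measgrisest0} and \eqref{eq:str.conv.pN.L2}). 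The coercivity of $\K$ (or $\tK$) then gives
\[
k_\ast\int_0^T\int_\O \theta^2|\nabla(\prss{\eps}-p)|^2
\le \mbox{(cross terms)} + \mbox{(lower-order terms)},
\]
after absorbing the usual $\nabla\theta$ commutator contributions.

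The next step is to show the right-hand side vanishes. The cross terms of the form $\int\int \frac{\K}{\mu(\conc{\eps})}\nabla(\theta\prss{\eps})\cdot\nabla(\theta p) - \int\int \frac{\K}{\underline\mu}\nabla(\theta p)\cdot\nabla(\theta p)$ vanish because $\nabla(\theta\prss{\eps})\weakto\nabla(\theta p)$ weakly in $\Leb[d]{2}{r}$ (from \eqref{eq:measgrisest1}) while $1/\mu(\conc{\eps})\nabla(\theta p)\to 1/\underline\mu\,\nabla(\theta p)$ strongly — here one uses the a.e.\ convergence \eqref{ae.conv.mucN} of $1/\mu(\conc{\eps})$ together with dominated convergence, and the fact that $\nabla(\theta p)\in L^2$. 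The lower-order terms — those carrying a factor $\prss{\eps}\dive(\tK\nabla\theta)$, $\theta\mu'(\conc{\eps})\darcy{\eps}\cdot\nabla\conc{\eps}$, or $\mu(\conc{\eps})\darcy{\eps}\cdot\nabla\theta$, paired against $\theta(\prss{\eps}-p)$ — are controlled using the strong convergence $\prss{\eps}\to p$ in $\Leb{a}{2}$ on the support of $\theta$ from \eqref{eq:str.conv.pN.L2}, combined with the uniform bounds \eqref{eq:measdarcyest} and \eqref{eq:measugradcest}: the difference $\prss{\eps}-p$ tends to zero strongly in every $\Leb{a}{2}$ while its multiplier remains bounded in a dual space, so each product integrates to zero in the limit. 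This shows $\theta\nabla\prss{\eps}\to\theta\nabla p$ strongly in $\Leb[d]{2}{2}$ for each fixed admissible $\theta$.

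Finally I would globalise and upgrade the exponents. Choosing $\theta=\theta_j\to 1$ a.e.\ and using the uniform bound on $\prss{\eps}$ in $L^\infty(0,T;W^{1,q}(\O))$ for all $q<2$ (Remark~\ref{rem:W1l}), the part of $\nabla\prss{\eps}$ living near $\discK\cup\supp(\nu)$ is small in $\Leb[d]{a}{q}$ uniformly in $\eps$ because $|\O\setminus\{\theta_j=1\}|\to 0$ and one has equiintegrability from the $L^\infty L^{q'}$-bound with $q'>q$ slightly larger; this yields $\nabla\prss{\eps}\to\nabla p$ strongly in $\Leb[d]{1}{q}$, hence a.e.\ along a subsequence, and then interpolating the a.e.\ convergence against the $L^\infty(0,T;L^{q_1}(\O))$ bound for $q<q_1<2$ (or $q_1<d/(d-1)$, whichever regularity one invokes) gives strong convergence in $\Leb[d]{a}{q}$ for all $a<\infty$ and all $q<\frac{d}{d-1}$ by dominated/Vitali convergence. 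The main obstacle I anticipate is the treatment of the lower-order terms: unlike the $L^2$-source case of Amirat--Ziani, one cannot simply test globally, and one must verify that each localisation-induced term, when paired with $\theta(\prss{\eps}-p)$, genuinely vanishes — which hinges delicately on having \emph{strong} (not merely weak) convergence of $\prss{\eps}$ itself, precisely the content that Lemma~\ref{lem:strconv.pN} was set up to provide.
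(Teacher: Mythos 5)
Your overall strategy (localise, test against $\prss{\eps}-p$, kill the cross term by weak--strong convergence, then globalise via a.e.\ convergence and Vitali) is the right one, and your final globalisation step matches the paper's Step~2. But the way you generate the energy identity has a genuine gap. You propose to test the \emph{expanded} second-order equation \eqref{eq:measgris1} against $\theta(\prss{\eps}-p)$, which puts the term $\theta\mu'(\conc{\eps})\darcy{\eps}\cdot\nabla\conc{\eps}$ on the right-hand side. By \eqref{eq:measugradcest} this term is bounded only in $\Leb{2}{r}$ for $r<\frac{2d}{2d-1}$, so its ``dual space'' in the spatial variable is $L^{r'}(\O)$ with $r'>2d$. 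Your justification --- that $\prss{\eps}-p\to0$ in every $\Leb{a}{2}$ while the multiplier ``remains bounded in a dual space'' --- therefore fails: you would need $\theta(\prss{\eps}-p)\to0$ in $L^2(0,T;L^{r'}(\O))$ with $r'>2d$, and for $d=3$ the available bound $\theta\prss{\eps}\in L^\infty(0,T;H^1(\O))\hookrightarrow L^\infty(0,T;L^6(\O))$ does not even give \emph{boundedness} in $L^{r'}(\O)$ for $r'>6=2d$. (In $d=2$, or in $d=3$ after upgrading \eqref{eq:measugradcest} to $r<4/3$ via Remark~\ref{rem:W1l} and interpolating $L^2_x$ against $L^6_x$, the step can be rescued, but none of this is in your write-up.) A second, smaller slip: \eqref{ae.conv.mucN} is strong convergence of $1/\mu(\conc{\eps}(\cdot,t))$ in $(W^{1,\delta}(\O))'$, not a.e.\ convergence in $x$, so it cannot be fed into dominated convergence to get $\frac{1}{\mu(\conc{\eps})}\nabla(\theta p)\to\frac{1}{\underline\mu}\nabla(\theta p)$ strongly in $L^2$; the pointwise convergence you need is \eqref{eq:aeconv.c.meas} from Lemma~\ref{lem:ae.cN.meas}, which holds precisely on $\{|\U|\neq0\}=\{|\nabla p|\neq0\}$, the only set where the integrand is nonzero.

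The paper avoids the problematic dispersive term altogether by never expanding the divergence. It tests the first-order relation $\int_0^T\int_\O\darcy{\eps}\cdot\nabla(\rho\psi)=0$ (valid because $\rho\nu=0$) with $\psi$ approximating $\prss{\eps}-p$ in $L^2(0,T;H^1(U))$ on a neighbourhood $U$ of $\supp(\rho)$; the only commutator produced is $\int_0^T\int_\O(\prss{\eps}-p)\,\darcy{\eps}\cdot\nabla\rho$, in which $\theta\darcy{\eps}$ is bounded in $\Leb[d]{2}{2}$ by \eqref{eq:measgrisest0}, so the strong $\Leb{2}{2}$ convergence \eqref{eq:str.conv.pN.L2} of $\theta(\prss{\eps}-p)$ suffices. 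The coercivity is then applied to $\rho\frac{\K}{\mu(\conc{\eps})}\nabla(\prss{\eps}-p)\cdot\nabla(\prss{\eps}-p)$ directly, and no term involving $\nabla\conc{\eps}$ ever appears. If you want to keep your route, you must either restrict to $d=2$ or explicitly invoke the improved regularity of Remark~\ref{rem:W1l} plus an interpolation argument to handle the $\darcy{\eps}\cdot\nabla\conc{\eps}$ term; otherwise, switch to the divergence-form test as in the paper.
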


\begin{proof}
\emph{Step 1: strong convergence of localised functions.}

Let $\rho\in C^\infty_c(\O)$ such that $\supp(\rho)\cap(\discK\cup\supp(\nu))=\emptyset$ and $\rho\ge 0$.
We want to prove that as $\eps\to0$,
\begin{equation}\label{eq:gradpstrong.rho}
\sqrt{\rho}\nabla\prss{\eps}\to\sqrt{\rho} \nabla p\mbox{ strongly in $\Leb[d]{2}{2}$.}
\end{equation}
Let $\psi\in L^1(0,T;W^{1,q}(\O))$ for some $q>d$, and take $\rho\psi$ as a test
function in the equation \eqref{eq:jdktell} satisfied by $\darcy{\eps}$. Since $\supp(\rho)\cap\supp(\nu)=0$, the
source terms disappear and we find that
\begin{equation}\label{eq:uNu.1}
\int_0^T\int_\O\darcy{\eps}(x,t)\cdot\nabla(\rho\psi)(x,t)\ud x\ud t=0.
\end{equation}
Let $U$ be an open set in $\O$ such that $\supp(\rho)\subset U$ and $\ol{U}\cap
(\discK\cup\supp(\nu))=\emptyset$. Let $\theta\in C^\infty_c(\O)$ be such that $\theta=1$ on
$\ol{U}$ and $\supp(\theta)\cap (\discK\cup\supp(\nu))=\emptyset$.
Applying \eqref{eq:measgrisest0}, we see that $\darcy{\eps}\in L^2(0,T;L^2(U))$
and $\prss{\eps}-p\in L^2(0,T;H^1(U))$. Taking a sequence 
$(\psi_j)_{j\in\NN}\subset L^1(0,T; W^{1,q}(\O))$ for some $q>d$ and such that
$\psi_j\to\prss{\eps}-p$ in $L^2(0,T; H^1(U))$, we pass to the limit in \eqref{eq:uNu.1}
to see that this relation still holds with $\prss{\eps}-p$ instead of $\psi$. Expanding, we obtain
\begin{multline*}
-\int_0^T\int_\O \rho(x)\darcy{\eps}(x,t)\cdot\nabla(\prss{\eps}-p)(x,t)\ud x\ud t\\
=
\int_0^T\int_\O (\prss{\eps}(x,t)-p(x,t))\darcy{\eps}(x,t)\cdot\nabla\rho(x)\ud x\ud t.
\end{multline*}
By the choice of $\theta$ above, this can be written as
\begin{align} \label{eq:uNu.2}
-\int_0^T\int_\O \rho(x) &\darcy{\eps}(x,t)\cdot \nabla(\prss{\eps}-p)(x,t)\ud x\ud t\nonumber\\
={}&\int_0^T\int_\O (\theta(x)\prss{\eps}(x,t)-\theta(x)p(x,t))
\theta(x)\darcy{\eps}(x,t)\cdot \nabla \rho(x)\ud x\ud t\nonumber\\
\leq{}& \cter{cst:uN} \norm{\theta\prss{\eps}-\theta p}_{\Leb{2}{2}},
\end{align}
where the existence of $\ctel{cst:uN}$ (not depending on $\eps$) is ensured by 
\eqref{eq:measgrisest0}, which shows that $(\theta\darcy{\eps})_{\eps>0}$ is bounded in $\Leb[d]{2}{2}$.
Now use the definition of $\darcy{\eps}=-\frac{\K}{\mu(\conc{\eps})}\nabla\prss{\eps}$,
estimate \eqref{eq:uNu.2} and the properties of $\theta$ to write
\begin{align}
\frac{k_\ast}{\mu^\ast}&\norm{\sqrt{\rho}\nabla(\prss{\eps}-p)}_{\Leb[d]{2}{2}}^2\nonumber\\
\le{}&\int_0^T\int_\O \rho(x)\frac{\K(x)}{\mu(\conc{\eps}(x,t))}\nabla(\prss{\eps}-p)(x,t)
\cdot\nabla(\prss{\eps}-p)(x,t)\ud x\ud t\nonumber\\
={}&-\int_0^T\int_\O \rho(x)\darcy{\eps}(x,t)\cdot\nabla(\prss{\eps}-p)(x,t)\ud x\ud t\nonumber\\
&-\int_0^T\int_\O \rho(x)\frac{\K(x)}{\mu(\conc{\eps}(x,t))}\nabla p(x,t)
\cdot\nabla(\prss{\eps}-p)(x,t)\ud x\ud t\nonumber\\
\le{}&\cter{cst:uN} \norm{\theta\prss{\eps}-\theta p}_{\Leb{2}{2}}\nonumber\\
&-\int_0^T\int_\O \rho(x)\frac{\K(x)}{\mu(\conc{\eps}(x,t))}\theta(x)\nabla p(x,t)
\cdot\theta(x)\nabla(\prss{\eps}-p)(x,t)\ud x\ud t.
\label{cv.DpN.2}
\end{align}
By \eqref{eq:aeconv.c.meas}, $\mu(\conc{\eps})\to \mu(c)$ almost-everywhere on 
$\{|\U|\not=0\}=\{|\nabla p|\not=0\}$.
Hence, by the dominated convergence theorem and \eqref{eq:measgrisest0},
$(\frac{\K}{\mu(\conc{\eps})}\theta\nabla p)_{\eps>0}$ converges strongly in $\Leb[d]{2}{2}$.
Using \eqref{eq:measgrisest0} and \eqref{eq:measpntop} we also have
$\theta\nabla\prss{\eps}\to\theta\nabla p$ weakly in $\Leb[d]{2}{2}$.
Hence, the last term in \eqref{cv.DpN.2} tends to $0$ as $\eps\to0$.
Taking the superior limit of this estimate and using \eqref{eq:str.conv.pN.L2} shows that
\eqref{eq:gradpstrong.rho} holds.

\emph{Step 2: conclusion.}

Since \eqref{eq:gradpstrong.rho} is satisfied for all nonnegative $\rho\in C^\infty_c(\O)$ whose support
does not intersect the closed set $\discK\cup\supp(\nu)$, and since this set has a zero Lebesgue measure,
up to a subsequence we can assume that $\nabla\prss{\eps}\to\nabla p$ almost-everywhere on $\O\times (0,T)$.
The convergence \eqref{eq:gradpstrong.meas} then follows from the Vitali theorem and the
bound \eqref{eq:measdarcyest} on $(\nabla\prss{\eps})_{\eps>0}$ in $\Leb[d]{\infty}{q}$ for all $q<d/(d-1)$.
\end{proof}

The strong convergence of the Darcy velocity and of $(\conc{\eps}\darcy{\eps})_{\eps>0}$ is then straightforward.
\begin{lemma}\label{lem:strongdarcy.meas}
Assume \eqref{assum:general}. For $\eps>0$, let $(\prss{\eps},\darcy{\eps},\conc{\eps})$ be a 
solution-by-approxi\-ma\-tion of \eqref{def:sol.L2N.meas}.
Assume that \eqref{eq:meascntoc}--\eqref{eq:measuntou} hold along
a subsequence. Then along the same subsequence,
\begin{equation}\label{eq:strongdarcy.meas}
\darcy{\eps}\to\U \quad\mbox{strongly in $\Leb[d]{a}{q}$ for all $a<\infty$ and all $q<\frac{d}{d-1}$.}
\end{equation}
\end{lemma}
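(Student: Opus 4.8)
The plan is to write $\darcy{\eps} = -\tfrac{\K}{\mu(\conc{\eps})}\nabla\prss{\eps}$ and exploit the three ingredients already in hand: the strong convergence of the pressure gradients \eqref{eq:gradpstrong.meas}, the almost-everywhere convergence of $\conc{\eps}$ on the non-stagnant set $\{|\U|\not=0\}$ from Lemma \ref{lem:ae.cN.meas}, and the uniform bounds on $\darcy{\eps}$ in $\Leb[d]{\infty}{q}$ from \eqref{eq:measdarcyest}. First I would argue almost-everywhere convergence $\darcy{\eps}\to\U$ on $\O\times(0,T)$. Away from $\discK$ (a null set), $\K$ is defined pointwise; by Lemma \ref{lem:gradstrong.meas} and extraction of a further subsequence, $\nabla\prss{\eps}\to\nabla p$ a.e., and on $\{|\U|\not=0\}=\{|\nabla p|\not=0\}$ one has $\mu(\conc{\eps})\to\mu(c)$ a.e.\ by \eqref{eq:aeconv.c.meas}, hence $\darcy{\eps}\to-\tfrac{\K}{\mu(c)}\nabla p=\U$ a.e.\ there by Lemma \ref{lem:measdarcylimit}. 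On the stagnant set $\{|\U|=0\}=\{\nabla p=0\}$, the identity $\darcy{\eps}=-\tfrac{\K}{\mu(\conc{\eps})}\nabla\prss{\eps}$ together with $\nabla\prss{\eps}\to0$ a.e.\ and the uniform bound $\mu_\ast\le\mu(\conc{\eps})\le\mu^\ast$ forces $|\darcy{\eps}|\le k_\ast^{-1}\mu_\ast^{-1}|\nabla\prss{\eps}|\to0=|\U|$ a.e.\ there. Combining the two sets gives $\darcy{\eps}\to\U$ a.e.\ on $\O\times(0,T)$.

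Having secured the a.e.\ convergence, I would upgrade it to strong $\Leb[d]{a}{q}$ convergence for $a<\infty$, $q<\tfrac{d}{d-1}$ exactly as at the end of the proof of Lemma \ref{lem:gradstrong.meas}: the bound \eqref{eq:measdarcyest} on $(\darcy{\eps})_{\eps>0}$ in $\Leb[d]{\infty}{q'}$ for some $q'\in(q,\tfrac{d}{d-1})$ provides equi-integrability of $|\darcy{\eps}|^q$ over $\O\times(0,T)$, so Vitali's theorem promotes a.e.\ convergence to strong convergence in $\Leb[d]{a}{q}$ (use H\"older in time against the finite measure of $(0,T)$ to pass from $L^\infty$ in time to $L^a$ in time). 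The uniqueness of the limit (it must be $\U$ by \eqref{eq:measuntou}) means the whole original subsequence converges, not just a sub-subsequence.

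The only mild subtlety — and the step I would flag as the main obstacle — is the stagnant set $\{|\U|=0\}$, where Lemma \ref{lem:ae.cN.meas} gives no control on $\conc{\eps}$ and hence none on $\mu(\conc{\eps})$; it is essential there to observe that the factor $\tfrac{1}{\mu(\conc{\eps})}$ is harmless because it is uniformly bounded, so the vanishing of $\nabla\prss{\eps}$ alone kills $\darcy{\eps}$. Everything else is routine: once a.e.\ convergence holds on all of $\O\times(0,T)$ and the $\Leb[d]{\infty}{q'}$ bound is available, the Vitali argument is immediate. I would also remark in passing that the same reasoning, applied to $\conc{\eps}\darcy{\eps}$ using $0\le\conc{\eps}\le1$, yields strong convergence of $(\conc{\eps}\darcy{\eps})_{\eps>0}$ to $c\U$ in the same spaces, which is what is needed for the convective term in Section \ref{ssec:passlimitconc}.
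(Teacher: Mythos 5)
Your proof is correct, but it takes a different route from the paper's. The paper never establishes almost-everywhere convergence of $\darcy{\eps}$ and never discusses the stagnant set: it multiplies by the viscosity and uses the identity $\mu(\conc{\eps})\darcy{\eps}=-\K\nabla\prss{\eps}$, so that the troublesome factor $1/\mu(\conc{\eps})$ disappears entirely. It then writes $\mu_\ast\norm{\darcy{\eps}-\U}_{\Leb[d]{a}{q}}\le\norm{-\K\nabla\prss{\eps}-\mu(\conc{\eps})\U}_{\Leb[d]{a}{q}}$ and sends $\eps\to0$ using \eqref{eq:gradpstrong.meas} for the first term and dominated convergence for $\mu(\conc{\eps})\U\to\mu(c)\U$ (here $|\U|$ itself is the dominating function, so the set $\{\U=\mathbf{0}\}$ — where Lemma \ref{lem:ae.cN.meas} gives nothing — is automatically harmless, since both sides vanish there); the limit is $\norm{-\K\nabla p-\mu(c)\U}_{\Leb[d]{a}{q}}=0$ by \eqref{eq:measdarcylimit}. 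Your version instead proves pointwise convergence of $\darcy{\eps}$ by splitting $\O\times(0,T)$ into $\{|\nabla p|\ne0\}$ and $\{\nabla p=0\}$, using the uniform lower bound $\mu\ge\mu_\ast$ on the stagnant set, and then upgrades via Vitali and the $\Leb[d]{\infty}{q'}$ bound \eqref{eq:measdarcyest}. Both arguments use exactly the same ingredients; the paper's is marginally cleaner in that it requires no further subsequence extraction and no equi-integrability discussion, while yours has the small side benefit of yielding a.e.\ convergence of $\darcy{\eps}$ itself, which the paper only obtains afterwards (in the proof of Lemma \ref{lem:c.has.grad.meas}) by the partial converse to dominated convergence. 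Your closing remark on $\conc{\eps}\darcy{\eps}$ is precisely Corollary \ref{cor:custrong}.
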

\begin{proof}
The almost-everywhere convergence \eqref{eq:aeconv.c.meas} of $\conc{\eps}$ gives
$\mu(\conc{\eps})\U\to\mu(c)\U$ almost-everywhere on $\O\times(0,T)$. Since $\U\in\Leb[d]{a}{q}$
for all $a<\infty$ and $q<d/(d-1)$, this convergence also holds in $\Leb[d]{a}{q}$ by
dominated convergence. Thanks to \eqref{eq:measdarcylimit} and \eqref{eq:gradpstrong.meas}, 
letting $\eps\to0$ gives
\begin{align*}
\mu_{\ast}\norm{\darcy{\eps}-\U}_{\Leb[d]{a}{q}}
\leq{}&\norm{\mu(\conc{\eps})\darcy{\eps}-\mu(\conc{\eps})\U}_{\Leb[d]{a}{q}} \\
		={}& \norm{-\K\nabla\prss{\eps}-\mu(\conc{\eps})\U}_{\Leb[d]{a}{q}}\\
		&\to \norm{-\K\nabla p - \mu(c)\U}_{\Leb[d]{a}{q}}=0. \qedhere
\end{align*}
\end{proof}
\begin{corollary} \label{cor:custrong}
Assume \eqref{assum:general}. For $\eps>0$, let $(\prss{\eps},\darcy{\eps},\conc{\eps})$ be a 
solution-by-approxi\-ma\-tion of \eqref{def:sol.L2N.meas}.
Assume that \eqref{eq:meascntoc}--\eqref{eq:measuntou} hold along
a subsequence. Then along the same subsequence,
\begin{equation*}
\conc{\eps}\darcy{\eps}\to c\U \quad\mbox{strongly in $\Leb[d]{a}{q}$ for all $a<\infty$ and all $q<\frac{d}{d-1}$.}
\end{equation*}
\end{corollary}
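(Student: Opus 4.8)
The plan is to start from the elementary identity
\[
\conc{\eps}\darcy{\eps}-c\U = \conc{\eps}(\darcy{\eps}-\U)+(\conc{\eps}-c)\U
\]
and to estimate the two terms separately, drawing on the strong convergence $\darcy{\eps}\to\U$ from Lemma \ref{lem:strongdarcy.meas} for the first, and on the almost-everywhere convergence of $(\conc{\eps})_{\eps>0}$ on non-stagnant zones from Lemma \ref{lem:ae.cN.meas} for the second. Throughout, I fix $a<\infty$ and $q<\frac{d}{d-1}$.

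For the first term, the bound $0\le\conc{\eps}\le1$ a.e.\ (recorded in \eqref{eq:jdktc}) gives the pointwise inequality $|\conc{\eps}(\darcy{\eps}-\U)|\le|\darcy{\eps}-\U|$, hence $\norm{\conc{\eps}(\darcy{\eps}-\U)}_{\Leb[d]{a}{q}}\le\norm{\darcy{\eps}-\U}_{\Leb[d]{a}{q}}$, which tends to $0$ by Lemma \ref{lem:strongdarcy.meas}. For the second term, the point to check is that $(\conc{\eps}-c)\U\to0$ almost-everywhere on $\O\times(0,T)$: on $\{|\U|\not=0\}$ this is Lemma \ref{lem:ae.cN.meas} (after passing to a further subsequence if needed), and on the stagnant set $\{|\U|=0\}$ the vector $\U$ vanishes identically, so there is nothing to prove. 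Since $0\le\conc{\eps},c\le1$ we also have the domination $|(\conc{\eps}-c)\U|\le2|\U|$, and $|\U|\in\Leb{a}{q}$ for every $a<\infty$ because $\U\in L^\infty(0,T;L^q(\O)^d)$ by \eqref{eq:measuntou} and $(0,T)$ has finite measure. The dominated convergence theorem in $\Leb[d]{a}{q}$ --- applied first in the space variable for a.e.\ fixed $t$, then in time --- therefore yields $(\conc{\eps}-c)\U\to0$ strongly in $\Leb[d]{a}{q}$. Adding the two contributions gives the result.

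I do not expect a genuine obstacle here: the corollary is a direct consequence of Lemmas \ref{lem:ae.cN.meas} and \ref{lem:strongdarcy.meas}. The one subtlety worth flagging is the stagnant region $\{|\U|=0\}$, where Lemma \ref{lem:ae.cN.meas} provides no control on $\conc{\eps}$ and only weak-$*$ $L^\infty$ convergence is available; this causes no harm because $\U$, and therefore both $c\U$ and the limiting behaviour of $\conc{\eps}\darcy{\eps}$, are negligible there once the strong convergence $\darcy{\eps}\to\U$ is used.
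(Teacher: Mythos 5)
Your proof is correct and follows essentially the same route as the paper: the identity $\conc{\eps}\darcy{\eps}-c\U = \conc{\eps}(\darcy{\eps}-\U)+(\conc{\eps}-c)\U$, with the first term controlled by $0\le\conc{\eps}\le1$ and Lemma \ref{lem:strongdarcy.meas}, and the second by the almost-everywhere convergence \eqref{eq:aeconv.c.meas} together with $\U\in\Leb[d]{a}{q}$ and dominated convergence. Your explicit remark about the stagnant set $\{|\U|=0\}$ simply spells out what the paper leaves implicit.
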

\begin{proof}
Write $\conc{\eps}\darcy{\eps}-c\U = \conc{\eps}(\darcy{\eps}-\U) + (\conc{\eps}-c)\U$. 
Owing to \eqref{eq:strongdarcy.meas} and $0\leq\conc{\eps}\leq1$, the first term tends to 
$0$ in $\Leb[d]{a}{q}$ as $\eps\to0$. For the second term, use \eqref{eq:aeconv.c.meas} 
and the fact that $\U\in \Leb[d]{a}{q}$. 
\end{proof}

\subsection{Passing to the limit in the concentration equation} \label{ssec:passlimitconc}

The proof that $(p,\U,c)$ satisfies \eqref{eq:measdgradc} and \eqref{eq:measpara} follows
from the next two lemmas, which address the regularity and convergence of the 
diffusion-dispersion term.

\begin{lemma}\label{lem:c.has.grad.meas}
Assume \eqref{assum:general} and for $\eps>0$, let $(\prss{\eps},\darcy{\eps},\conc{\eps})$ be a 
solution-by-approxi\-ma\-tion to \eqref{def:sol.L2N.meas}.
Assume that \eqref{eq:meascntoc}--\eqref{eq:measuntou} hold along a subsequence.
Then the function $c$ defined by \eqref{eq:meascntoc} has a $\{ |\U|>0\}$-gradient and
\begin{equation} \label{eq:regdlimit.meas}
\D[\circ]{\cdot}{\U}\gradc{|\U|}{0} \in \Leb[d]{2}{r}\mbox{ for all $r<\frac{2d}{2d-1}$}.
\end{equation}
\end{lemma}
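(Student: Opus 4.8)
The plan is to verify the two requirements in Definition \ref{def:ugrad} for $f=c$ and $v=|\U|$, using the uniform estimates already collected and the strong convergences established in Lemmas \ref{lem:strongdarcy.meas}--\ref{lem:gradstrong.meas}. The sequences will be $c_\eps=\conc{\eps}$ and $v_\eps=|\darcy{\eps}|$: the weak convergence $\conc{\eps}\weakto c$ in $\Leb{2}{2}$ is immediate from \eqref{eq:meascntoc}, and the almost-everywhere convergence $|\darcy{\eps}|\to|\U|$ on $\O\times(0,T)$ follows (up to a further subsequence) from Lemma \ref{lem:gradstrong.meas}, which even gives $\nabla\prss{\eps}\to\nabla p$ a.e.\ and hence $\darcy{\eps}\to\U$ a.e. The first bullet of Definition \ref{def:ugrad} is therefore in hand.

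First I would fix a sequence $\eta_i\to0^+$ such that $\meas(\{|\U|=\eta_i\})=0$ for each $i$; this is possible since the level sets $\{|\U|=\eta\}$ are pairwise disjoint for distinct $\eta$, so only countably many can have positive measure, and we may choose $\eta_i$ outside that countable bad set. Next, for each fixed $i$, I would show $\mathbf{1}_{\{|\darcy{\eps}|>\eta_i\}}\nabla\conc{\eps}$ is bounded in $\Leb[d]{2}{2}$ uniformly in $\eps$: on the set $\{|\darcy{\eps}|>\eta_i\}$ we have $|\darcy{\eps}|>\eta_i$, so $|\nabla\conc{\eps}|^2\le\eta_i^{-1}|\darcy{\eps}||\nabla\conc{\eps}|^2$, and integrating and invoking \eqref{eq:measuhalfgradcest} gives a bound $\le \eta_i^{-1/2}\cter{dmeas}^{1/2}\phi_\ast^{-1/2}\min(\dl,\dt)^{-1/2}$. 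Hence, along a subsequence (diagonalising over $i$), $\mathbf{1}_{\{|\darcy{\eps}|>\eta_i\}}\nabla\conc{\eps}\weakto\chi_{\eta_i}$ weakly in $\Leb[d]{2}{2}$ for some $\chi_{\eta_i}$, which is exactly the second bullet. This furnishes the $\{|\U|>\eta_i\}$-gradient $\gradc{|\U|}{\eta_i}=\chi_{\eta_i}$ and, via the recipe in Definition \ref{def:ugrad}, the $\{|\U|>0\}$-gradient $\gradc{|\U|}{0}$; one should check the consistency that on $\{|\U|>\eta_i\}\cap\{|\U|>\eta_j\}$ the two candidate values agree, which I expect follows from an argument in Appendix \ref{sec:appa} (or can be seen directly by testing against functions supported where $|\U|>\max(\eta_i,\eta_j)$ together with the a.e.\ convergence $|\darcy{\eps}|\to|\U|$).

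For \eqref{eq:regdlimit.meas}, the strategy is to pass to the limit in $\D[\eps]{\cdot}{\darcy{\eps}}\nabla\conc{\eps}$ localised to $\{|\darcy{\eps}|>\eta_i\}$ and identify the limit as $\D[\circ]{\cdot}{\U}\gradc{|\U|}{\eta_i}$ on $\{|\U|>\eta_i\}$. On that set, $\D[\eps]{\cdot}{\darcy{\eps}}=\D[\circ]{\cdot}{\darcy{\eps}}+\eps\Phi\mathbf{I}$; the strong convergence $\darcy{\eps}\to\U$ a.e.\ (Lemma \ref{lem:gradstrong.meas} plus the a.e.\ extraction above) together with the continuity of $\DD_\circ$ in its second argument gives $\mathbf{1}_{\{|\darcy{\eps}|>\eta_i\}}\D[\circ]{\cdot}{\darcy{\eps}}\to\mathbf{1}_{\{|\U|>\eta_i\}}\D[\circ]{\cdot}{\U}$ a.e.\ on $\{|\U|\ne\eta_i\}$ (whose complement is negligible), and the bound $|\D[\circ]{\cdot}{\darcy{\eps}}|\le\phi_\ast^{-1}\max(\dl,\dt)|\darcy{\eps}|$ with $|\darcy{\eps}|$ bounded in $\Leb{\infty}{q}$ for $q<d/(d-1)$ lets one upgrade this to strong convergence in $\Leb{\infty}{q-}$ by Vitali. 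Combining a strongly convergent matrix factor with the weakly convergent $\mathbf{1}_{\{|\darcy{\eps}|>\eta_i\}}\nabla\conc{\eps}\weakto\gradc{|\U|}{\eta_i}$ in $\Leb[d]{2}{2}$ identifies the limit of the product in $\distt$ as $\D[\circ]{\cdot}{\U}\gradc{|\U|}{\eta_i}$ on $\{|\U|>\eta_i\}$; the uniform bound \eqref{eq:measdest} on $\D[\eps]{\cdot}{\darcy{\eps}}\nabla\conc{\eps}$ in $\Leb[d]{2}{r}$ then passes to the limit (by weak lower semicontinuity on each $\{|\U|>\eta_i\}$ and monotone convergence as $i\to\infty$, noting $\{|\U|>0\}=\bigcup_i\{|\U|>\eta_i\}$ up to the negligible level sets and that $\gradc{|\U|}{0}=0$ on $\{|\U|=0\}$) to give \eqref{eq:regdlimit.meas}. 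The main obstacle I anticipate is the bookkeeping around the level sets $\{|\U|=\eta_i\}$ and the fact that the indicator $\mathbf{1}_{\{|\darcy{\eps}|>\eta_i\}}$ does not converge to $\mathbf{1}_{\{|\U|>\eta_i\}}$ at points where $|\U|=\eta_i$; the choice $\meas(\{|\U|=\eta_i\})=0$ is precisely what neutralises this, but it must be threaded carefully through each convergence, and the compatibility of the various $\eta_i$-gradients (relegated to Appendix \ref{sec:appa}) is where the real content lies.
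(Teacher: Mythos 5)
The first half of your argument --- existence of the $\{|\U|>0\}$-gradient --- is essentially the paper's proof: a.e.\ convergence of $|\darcy{\eps}|$ from the strong convergence of the Darcy velocities, level sets of measure zero via (what amounts to) Lemma \ref{lem:levelset}, the uniform $\Leb[d]{2}{2}$ bound on $\mathbf{1}_{\{|\darcy{\eps}|>\eta_i\}}\nabla\conc{\eps}$ from the energy estimate divided by $\eta_i$, and a diagonal extraction. That part is fine.

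The second half has a genuine gap at the identification of the limit of $\mathbf{1}_{\{|\darcy{\eps}|>\eta_i\}}\D[\eps]{\cdot}{\darcy{\eps}}\nabla\conc{\eps}$. You propose to multiply a ``strongly convergent matrix factor'' $\mathbf{1}_{\{|\darcy{\eps}|>\eta_i\}}\D[\circ]{\cdot}{\darcy{\eps}}$ by the weakly-$\Leb[d]{2}{2}$ convergent $\mathbf{1}_{\{|\darcy{\eps}|>\eta_i\}}\nabla\conc{\eps}$. But $|\D[\circ]{\cdot}{\darcy{\eps}}|\le \phi_\ast^{-1}\max(\dl,\dt)|\darcy{\eps}|$ grows \emph{linearly} in $\darcy{\eps}$, which is only bounded in $\Leb{\infty}{q}$ for $q<\frac{d}{d-1}\le 2$ (and $q<2$ even with Remark \ref{rem:W1l}); Vitali therefore gives strong convergence of the matrix factor only in $L^s(\O\times(0,T))$ with $s<2$ strictly. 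A strong-$L^s$ ($s<2$) factor against a weak-$L^2$ factor does not satisfy the H\"older compatibility $1/s+1/2\le 1$ required by any weak--strong product lemma (in particular Lemma \ref{lem:ws}), and the standard splitting $\int(A_\eps-A)g_\eps\phi+\int A(g_\eps-g)\phi$ fails in both terms because $A\notin L^2$ in space. The uniform bound \eqref{eq:measdest} on the product does not by itself rescue the identification. This is exactly why the paper factorises $\D[\eps]{}{}=\Dsq[\eps]{}{}\Dsq[\eps]{}{}$ (equation \eqref{decom.DNcN}): the square root grows like $|\darcy{\eps}|^{1/2}$, hence is bounded in $L^\infty(0,T;L^s(\O))$ for $s<\frac{2d}{d-1}>2$ and converges strongly in $\Leb[d\times d]{2}{2}$ by Lemma \ref{lem:h}, while $\Dsq[\eps]{\cdot}{\darcy{\eps}}\nabla\conc{\eps}$ is bounded in $\Leb[d]{2}{2}$ by the energy estimate \eqref{eq:measenergyest}; two applications of Lemma \ref{lem:ws} then close the exponents and yield \eqref{cv.DcN.step1}, after which your weak lower semicontinuity plus Fatou argument for \eqref{eq:regdlimit.meas} goes through as you describe. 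Without the square-root decomposition (or an equivalent equi-integrability/Egorov argument that you do not supply), the step ``identifies the limit of the product in $\distt$'' is unjustified.
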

\begin{proof}
From \eqref{eq:strongdarcy.meas} and the partial converse to the dominated convergence theorem, 
up to a subsequence $\darcy{\eps}\to\U$ almost everywhere on $\O\times(0,T)$. 
Let $(\eta_{i})_{i\in\NN}$ be a sequence in $\RR$ with $\eta_i \to 0^{+}$ as $i \to \infty$ 
and such that for every $i\in\NN$, $\meas(\{|\U|=\eta_i \})=0$ 
(existence of such a sequence is guaranteed by Lemma \ref{lem:levelset}). 
On the set $\{ |\darcy{\eps}|>\eta_i \}$ we have 
\begin{equation*}
\D[\eps]{x}{\darcy{\eps}}\nabla\conc{\eps}\cdot\nabla\conc{\eps}
\geq\mathrm{min}(\dl,\dt)\phi_{\ast}\eta_i|\nabla\conc{\eps}|^{2}.
\end{equation*}
Since $(\D[\eps]{\cdot}{\darcy{\eps}}\nabla\conc{\eps}\cdot\nabla\conc{\eps})_{\eps>0}$ 
is bounded in $\Leb{1}{1}$ (see \eqref{eq:measenergyest}), it follows that
\begin{equation*}
(\mathbf{1}_{\{ |\darcy{\eps}|>\eta_i \}}\nabla\conc{\eps})_{\eps>0} \quad 
\mbox{is bounded in $\Leb[d]{2}{2}$ for all $i\in\NN$}.
\end{equation*}
After performing a diagonal extraction upon the index $i$, we infer the existence
of $\chi_{\eta_i} \in \Leb[d]{2}{2}$ such that, up to a subsequence 
not depending on $i$, 
\begin{equation}\label{eq:gradcweak.meas}
\ch_{\{|\darcy{\eps}|>\eta_i\}}\nabla\conc{\eps}\weakto \chi_{\eta_i}\quad\mbox{ weakly in
$\Leb[d]{2}{2}$.}
\end{equation}
The hypotheses of Definition \ref{def:ugrad} are therefore satisfied and so 
$c$ has a $\{ |\U|>0 \}$-gradient.

To prove \eqref{eq:regdlimit.meas}, we begin by using the same splitting trick
as in our previous work \cite[Section 4.3]{dt14} by writing
\begin{equation}\label{decom.DNcN}
\ch_{\{|\darcy{\eps}|>\eta_i\}}\D[\eps]{\cdot}{\darcy{\eps}}\nabla\conc{\eps}=\Dsq[\eps]{\cdot}{\darcy{\eps}}
\left(\ch_{\{|\darcy{\eps}|>\eta_i\}}\Dsq[\eps]{\cdot}{\darcy{\eps}}\nabla\conc{\eps}\right)
\end{equation}
and applying Lemma \ref{lem:ws} once to each term in the right-hand side product.
By \eqref{eq:strongdarcy.meas} with $a=q=1$ and the estimate \eqref{eq:Dsq:bound} on $\DD_\eps^{1/2}$,
Lemma \ref{lem:h} yields
\begin{equation}\label{cv.Dsq.1}
\Dsq[\eps]{\cdot}{\darcy{\eps}}\to \Dsq[\circ]{\cdot}{\U}\quad\mbox{ strongly in $\Leb[d\times d]{2}{2}$}.
\end{equation}
Since $(\Dsq[\eps]{\cdot}{\darcy{\eps}}\nabla\conc{\eps})_{\eps>0}$ is bounded in $\Leb[d]{2}{2}$
(see \eqref{eq:measenergyest}), the weak convergence \eqref{eq:gradcweak.meas} enables
us to apply Lemma \ref{lem:ws} with $r_1=r_2=s_2=s_2=2$ and $a=b=2$,
to $w_\eps=$ components of $\Dsq[\eps]{\cdot}{\darcy{\eps}}$ and $v_\eps=$ components of 
$\ch_{\{|\darcy{\eps}|>\eta_i\}}\nabla\conc{\eps}$. This gives
\[
\ch_{\{|\darcy{\eps}|>\eta_i\}}\Dsq[\eps]{\cdot}{\darcy{\eps}}\nabla\conc{\eps}\weakto 
\Dsq[\circ]{\cdot}{\U}\chi_{\eta_i}=\Dsq[\circ]{\cdot}{\U}\gradc{|\U|}{\eta_i}
\]
weakly in $\Leb[d]{2}{2}$.
This weak convergence and \eqref{cv.Dsq.1} enable us to re-use Lemma \ref{lem:ws}
with $w_\eps=$ components of $\Dsq[\eps]{\cdot}{\darcy{\eps}}$ and $v_\eps=$
components of $\ch_{\{|\darcy{\eps}|>\eta_i\}}\Dsq[\eps]{\cdot}{\darcy{\eps}}\nabla\conc{\eps}$. Owing to the decomposition
\eqref{decom.DNcN}, the bound \eqref{eq:measdest} then shows that, for any $r<\frac{2d}{2d-1}$,
\begin{equation}\label{cv.DcN.step1}
\ch_{\{|\darcy{\eps}|>\eta_i\}}\D[\eps]{\cdot}{\darcy{\eps}}\nabla\conc{\eps}\weakto\D[\circ]{\cdot}{\U}\gradc{|\U|}{\eta_i}
\mbox{ weakly in $\Leb[d]{2}{r}$}.
\end{equation}
In particular, this shows that 
\begin{multline*}
\norm{\D[\circ]{\cdot}{\U}\gradc{|\U|}{\eta_{i}}}_{\Leb[d]{2}{r}}\\
= \norm{\ch_{\{|\U|>\eta_{i}\}}\D[\circ]{\cdot}{\U}\gradc{|\U|}{0}}_{\Leb[d]{2}{r}}
\leq \cter{measd.r}.
\end{multline*}
Now $\ch_{\{ |\U| > \eta_i \}} \to \ch_{\{ |\U| > 0 \}}$ almost-everywhere as $i \to \infty$, 
so by the Fatou lemma (applied twice),
\begin{align*}
\int_{0}^{T} &\norm{\D[\circ]{\cdot}{\U(\cdot,t)}\gradc{|\U|}{0}(\cdot,t)}_{L^{r}(\O)^{d}}^{2}\ud t \\
&\leq \int_{0}^{T} \liminf_{i \to \infty} \norm{\ch_{\{ |\U| > \eta_i \}}
\D[\circ]{\cdot}{\U(\cdot,t)}\gradc{|\U|}{0}(\cdot,t)}_{L^{r}(\O)^{d}}^{2}\ud t\\
&\leq \liminf_{i \to \infty} \int_{0}^{T} \norm{\ch_{\{ |\U| > \eta_i \}}
\D[\circ]{\cdot}{\U(\cdot,t)}\gradc{|\U|}{0}(\cdot,t)}_{L^{r}(\O)^{d}}^{2}\ud t\leq \cter{measd.r}.
\qedhere
\end{align*}
\end{proof}

\begin{lemma} \label{lem:ddtlimit.meas}
Assume \eqref{assum:general} and for $\eps>0$ let $(\prss{\eps},\darcy{\eps},\conc{\eps})$ be a 
solution-by-approxi\-ma\-tion to \eqref{def:sol.L2N.meas}.
Assume that \eqref{eq:meascntoc}--\eqref{eq:measuntou} hold along a subsequence.
Then along the same subsequence,
\begin{equation} \label{eq:ddtconv_meas}
	\begin{gathered} 
	\D[\eps]{\cdot}{\darcy{\eps}}\nabla\conc{\eps}\weakto\D[\circ]{\cdot}{\U}\gradc{|\U|}{0} 
	\quad \mbox{weakly in $\Leb[d]{2}{r}$}\\ 
	\mbox{for all $r<\frac{2d}{2d-1}$.} 
	\end{gathered}
\end{equation}	
\end{lemma}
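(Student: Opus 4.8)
The plan is to pass to the limit in the full tensor $\D[\eps]{\cdot}{\darcy{\eps}}\nabla\conc{\eps}$ by combining the localised weak convergence already obtained on each set $\{|\darcy{\eps}|>\eta_i\}$ in Lemma~\ref{lem:c.has.grad.meas} with a control of the ``stagnant'' part where $|\darcy{\eps}|$ is small. First I would recall from \eqref{cv.DcN.step1} that for each fixed $i\in\NN$,
\[
\ch_{\{|\darcy{\eps}|>\eta_i\}}\D[\eps]{\cdot}{\darcy{\eps}}\nabla\conc{\eps}\weakto
\D[\circ]{\cdot}{\U}\gradc{|\U|}{\eta_i}
\quad\mbox{weakly in }\Leb[d]{2}{r},
\]
and that $\D[\circ]{\cdot}{\U}\gradc{|\U|}{\eta_i}=\ch_{\{|\U|>\eta_i\}}\D[\circ]{\cdot}{\U}\gradc{|\U|}{0}\to \D[\circ]{\cdot}{\U}\gradc{|\U|}{0}$ in $\Leb[d]{2}{r}$ as $i\to\infty$, by dominated convergence using the bound \eqref{eq:regdlimit.meas}. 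So it suffices to control, uniformly in $\eps$, the complementary piece $\ch_{\{|\darcy{\eps}|\le\eta_i\}}\D[\eps]{\cdot}{\darcy{\eps}}\nabla\conc{\eps}$ and show its $\Leb[d]{2}{r}$ norm tends to $0$ as $i\to\infty$, uniformly in $\eps$.

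For that complementary piece I would split $\D[\eps]{\cdot}{\darcy{\eps}}\nabla\conc{\eps}=\Dsq[\eps]{\cdot}{\darcy{\eps}}\big(\Dsq[\eps]{\cdot}{\darcy{\eps}}\nabla\conc{\eps}\big)$ exactly as in \eqref{decom.DNcN}. The second factor is bounded in $\Leb[d]{2}{2}$ by \eqref{eq:measenergyest}. For the first factor, on $\{|\darcy{\eps}|\le\eta_i\}$ the bound \eqref{eq:Dsq:bound} gives $|\Dsq[\eps]{\cdot}{\darcy{\eps}}|\le\phi_\ast^{-1/2}(\eps+\max(\dl,\dt)\eta_i)^{1/2}$. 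Combined with the $L^\infty(0,T;L^s(\O))$ bound on $|\Dsq[\eps]{\cdot}{\darcy{\eps}}|$ for all $s<2d/(d-1)$ (from \eqref{eq:Dsq:bound} and \eqref{eq:measdarcyest}), interpolation shows $\ch_{\{|\darcy{\eps}|\le\eta_i\}}\Dsq[\eps]{\cdot}{\darcy{\eps}}$ is small in $L^\infty(0,T;L^s(\O))$ for a suitable $s$ once $\eps\le\eta_i$ (say), with the smallness governed by $\eta_i^{1/2}$; for the finitely many $\eps>\eta_i$ in any tail one argues directly. Then H\"older's inequality (with $\frac1r=\frac1s+\frac12$) yields
\[
\norm{\ch_{\{|\darcy{\eps}|\le\eta_i\}}\D[\eps]{\cdot}{\darcy{\eps}}\nabla\conc{\eps}}_{\Leb[d]{2}{r}}\le C\,\eta_i^{1/2}+o_\eps(1),
\]
with $C$ independent of $\eps$ and $i$.

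Putting these together: write $\D[\eps]{\cdot}{\darcy{\eps}}\nabla\conc{\eps}=\ch_{\{|\darcy{\eps}|>\eta_i\}}\D[\eps]{\cdot}{\darcy{\eps}}\nabla\conc{\eps}+\ch_{\{|\darcy{\eps}|\le\eta_i\}}\D[\eps]{\cdot}{\darcy{\eps}}\nabla\conc{\eps}$. Test against an arbitrary $\varphi\in\Leb[d]{r'}{r'}$; the first term converges (for fixed $i$) to $\int\D[\circ]{\cdot}{\U}\gradc{|\U|}{\eta_i}\cdot\varphi$, which is within $C\norm{\varphi}\,\|\ch_{\{|\U|\le\eta_i\}}\D[\circ]{\cdot}{\U}\gradc{|\U|}{0}\|_{\Leb[d]{2}{r}}$ of the desired limit $\int\D[\circ]{\cdot}{\U}\gradc{|\U|}{0}\cdot\varphi$; the second term is bounded by $C\norm{\varphi}(\eta_i^{1/2}+o_\eps(1))$. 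Taking $\limsup_{\eps\to0}$ and then $i\to\infty$ gives the claimed weak convergence. The main obstacle is the uniform-in-$\eps$ control of the stagnant-zone contribution: one must be careful that the smallness from $|\darcy{\eps}|\le\eta_i$ is not spoiled by the $\eps\mathbf I$ term in $\DD_\eps$, which is why the argument is split according to whether $\eps$ is below or above $\eta_i$, and why only the $\liminf$/$\limsup$ as $\eps\to0$ (rather than a uniform bound) is needed at the end.
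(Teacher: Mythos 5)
Your proposal is correct and follows essentially the same route as the paper: split according to $\{|\darcy{\eps}|>\eta_i\}$ versus its complement, pass to the limit on the first piece via \eqref{cv.DcN.step1}, control the stagnant piece through the factorisation $\DD_\eps=\DD_\eps^{1/2}\DD_\eps^{1/2}$ together with \eqref{eq:measenergyest} and \eqref{eq:Dsq:bound}, handle the residual set $\{0<|\U|\le\eta_i\}$ using \eqref{eq:regdlimit.meas}, and conclude by taking $\limsup_{\eps\to0}$ and then $i\to\infty$. The only remark is that your interpolation step and the case distinction $\eps\le\eta_i$ versus $\eps>\eta_i$ are unnecessary: on $\{|\darcy{\eps}|\le\eta_i\}$ the bound \eqref{eq:Dsq:bound} already gives the pointwise $L^\infty$ estimate $\phi_\ast^{-1/2}(\eps+\max(\dl,\dt)\eta_i)^{1/2}$, which yields a $\Leb[d]{2}{2}$ (hence $\Leb[d]{2}{r}$) bound of order $(\eps+\eta_i)^{1/2}$ directly, and the iterated limit then disposes of the $\eps$ term.
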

\begin{proof}
Let $\bpsi \in \Leb[d]{2}{r'}$ and $i\in\NN$. Write
\begin{align}
\int_{0}^{T}\int_{\O}&\D[\eps]{x}{\darcy{\eps}(x,t)}\nabla\conc{\eps}(x,t)\cdot\bpsi(x,t) \ud x \ud t \nonumber\\
={}& \int_{0}^{T}\int_{\O}\ch_{\{ |\darcy{\eps}| > \eta_{i}\}}
\D[\eps]{x}{\darcy{\eps}(x,t)}\nabla\conc{\eps}(x,t)\cdot\bpsi(x,t) \ud x \ud t \nonumber\\
&+ \int_{0}^{T}\int_{\O}\ch_{\{ |\darcy{\eps}| \leq \eta_{i}\}}
\D[\eps]{x}{\darcy{\eps}(x,t)}\nabla\conc{\eps}(x,t) \cdot \bpsi(x,t) \ud x \ud t \nonumber\\
={}& T_{1}(i,\eps) + T_{2}(i,\eps),
 \label{last.cv.decomp.0}
\end{align}
and
\begin{align}
\int_{0}^{T}\int_{\O}&\D[\circ]{x}{\U(x,t)}\gradc{|\U|}{0}(x,t)\cdot\bpsi(x,t) \ud x \ud t \nonumber\\
={}& \int_{0}^{T}\int_{\O}\ch_{\{|\U|>\eta_{i}\}}\D[\circ]{x}{\U(x,t)}\gradc{|\U|}{0}(x,t)\cdot\bpsi(x,t)\ud x\ud t \nonumber\\
&+ \int_{0}^{T}\int_{\O}\ch_{\{0<|\U|\leq\eta_{i}\}}\D[\circ]{x}{\U(x,t)}\gradc{|\U|}{0}(x,t)\cdot\bpsi(x,t)\ud x \ud t\nonumber\\
={}& L_{1}(i) + L_{2}(i).
 \label{last.cv.decomp.1}
\end{align}
Using \eqref{cv.DcN.step1} we obtain $\lim_{\eps\to0} T_1(i,\eps)=L_1(i)$.
For $T_2(i,\eps)$, use the estimate \eqref{eq:measenergyest} on $\Dsq[\eps]{\cdot}{\darcy{\eps}}\nabla\conc{\eps}$
and the estimate \eqref{eq:Dsq:bound} on $\mathbf{D}^{1/2}_\eps$ to obtain
\begin{align*}
|T_{2}&(i,\eps)| \\
\leq{}& \int_{0}^{T}\int_{\O}\ch_{\{ |\darcy{\eps}|\leq\eta_{i}\}}|
\Dsq[\eps]{x}{\darcy{\eps}(x,t)}\nabla\conc{\eps}(x,t)\cdot\Dsq[\eps]{x}{\darcy{\eps}(x,t)}\bpsi(x,t)|\ud x \ud t \\
\leq{}& \norm{\Dsq[\eps]{\cdot}{\darcy{\eps}}\nabla\conc{\eps}}_{\Leb[d]{2}{2}}
\norm{\ch_{\{ |\darcy{\eps}|\leq \eta_i \}}\Dsq[\eps]{\cdot}{\darcy{\eps}}\bpsi}_{\Leb[d]{2}{2}} \\
\leq{}& \cter{dmeas} \phi_\ast^{-1/2}\left(\eps+\max(\dl,\dt)\eta_i\right)^{1/2}\norm{\bpsi}_{\Leb[d]{2}{2}}.
\end{align*}
This shows that
\begin{equation*}
\lim_{i\to\infty}\limsup_{\eps\to0} T_2(i,\eps) = 0.
\end{equation*}
For $L_2(i)$, use \eqref{eq:regdlimit.meas} and write
\begin{align*}
|L_{2}(i)| 
&\leq \int_{0}^{T}\int_{\O}\ch_{\{ 0 < |\U| \leq \eta_i \}}|\D[\circ]{x}{\U}\gradc{|\U|}{0} \cdot \bpsi | \ud x \ud t \\
&\leq \norm{\D[\circ]{\cdot}{\U}\gradc{|\U|}{0}}_{\Leb[d]{2}{r}}
\norm{\ch_{\{ 0 < |\U| \leq \eta_i \}}\bpsi}_{\Leb[d]{2}{r'}} \\
&\leq \cter{measd.r}\norm{\ch_{\{ 0 < |\U| \leq \eta_i \}}\bpsi}_{\Leb[d]{2}{r'}}
\to 0 \quad \mbox{ as }i \to \infty.
\end{align*}
Then
\begin{align*}
\bigg|\int_{0}^{T}\int_{\O}\D[\eps]{x}{\darcy{\eps}(x,t)}&\nabla\conc{\eps}(x,t)\cdot\bpsi(x,t)\ud x \ud t \\
&- \int_{0}^{T}\int_{\O}\D[\circ]{x}{\U(x,t)}\gradc{|\U|}{0}(x,t)\cdot\bpsi(x,t)\ud x\ud t \bigg| \\
={}& \left|T_{1}(i,\eps) + T_{2}(i,\eps) - (L_{1}(i) + L_{2}(i))\right| \\
\leq{}& |T_{1}(i,\eps) - L_{1}(i)| + |T_{2}(i,\eps)| + |L_{2}(i)|.
\end{align*}
Then taking (in this order) the limit superior as $\eps\to0$ and the limit 
as $i\to\infty$, we conclude that as $\eps\to0$
\begin{multline*}
\int_{0}^{T}\int_{\O}\D[\eps]{x}{\darcy{\eps}(x,t)}\nabla\conc{\eps}(x,t)\cdot\bpsi(x,t)\ud x\ud t  \\
\to \int_{0}^{T}\int_{\O}\D[\circ]{x}{\U(x,t)}\gradc{|\U|}{0}(x,t)\cdot\bpsi(x,t)\ud x\ud t.\qedhere
\end{multline*}
\end{proof}

The proof of Theorem \ref{th:main} is now easy to complete. 
Equation \eqref{eq:lim.dt}, Corollary \ref{cor:custrong} and Lemma \ref{lem:ddtlimit.meas}
enable us to take the limit of \eqref{eq:jdktpara}, thus proving \eqref{eq:measpara}.
To prove the last two parts of \eqref{eq:measc}, that is,
$c\in L^{\infty}(0,T; L^{1}(\O,\nu))$ and $0\leq c(x,t)\leq 1$ for $\nu$-almost-every
$x\in\O$ and for almost-every $t\in(0,T)$, follow exactly the same argument as that employed
by Fabrie--Gallou\"et \cite[Lemma 5.1]{fg00}.

\begin{remark}
Note that we can use exactly the same method as in Lemmas \ref{lem:c.has.grad.meas} 
and \ref{lem:ddtlimit.meas} to show that
\begin{gather*}
\Dsq[\eps]{\cdot}{\darcy{\eps}}\nabla\conc{\eps}\weakto\Dsq[\circ]{\cdot}{\U}\gradc{|\U|}{0} \mbox{ weakly in $\Leb[d]{2}{2}$, and}\\
\darcy{\eps}\cdot\nabla\conc{\eps}\rightharpoonup \U\cdot\gradc{|\U|}{0} \mbox{ weakly in $\Leb{2}{r}$ for
all $r<\frac{2d}{2d-1}$}.
\end{gather*}
The latter is particularly relevant in the nonconservative formulation of \eqref{eq:conc}, 
in which a term of that form appears. 
\end{remark}


\appendix
\section{Properties of the concentration gradient} \label{sec:appa}

The results in this appendix attest to the consistency of Definition \ref{def:ugrad}. 
Lemmas \ref{lem:levelset} and \ref{lem:char} give the necessary background for 
Proposition \ref{prop:welldefined}, which makes precise the dependence of the
$\{ v > 0 \}$-gradient (Definition \ref{def:ugrad}) on the sequences necessary
to define it. 

\begin{lemma} \label{lem:levelset}
Let $(\O, \Sigma, \mu)$ be a $\sigma$-finite measure space and $f: \Omega \to \RR$ 
be measurable. For almost every $k \in \RR$, 
\begin{equation}
\mu(\{ f = k	 \}) = 0. \label{eq:level}
\end{equation}
\end{lemma}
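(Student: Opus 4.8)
The plan is to show that the exceptional set $B := \{k \in \RR : \mu(\{f = k\}) > 0\}$ is at most countable, which immediately gives \eqref{eq:level} for Lebesgue-almost every $k \in \RR$, since countable sets are Lebesgue-null.

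First I would treat the case of a finite measure space, $\mu(\O) < \infty$. The key observation is that the level sets $\{f = k\}$, $k \in \RR$, are pairwise disjoint and measurable (as $f$ is measurable). Hence, for each fixed $n \in \NN$, the set $B_n := \{k \in \RR : \mu(\{f = k\}) \geq 1/n\}$ must be finite: if it contained $N$ distinct values $k_1, \dots, k_N$, then by disjointness $\mu(\O) \geq \sum_{j=1}^N \mu(\{f = k_j\}) \geq N/n$, which forces $N \leq n\,\mu(\O)$. Therefore $B = \bigcup_{n \in \NN} B_n$ is a countable union of finite sets, hence at most countable.

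Next I would pass to the $\sigma$-finite case by writing $\O = \bigcup_{m \in \NN} \O_m$ with $\O_m \in \Sigma$ and $\mu(\O_m) < \infty$. Applying the finite-measure argument to each restricted space $(\O_m, \Sigma \cap \O_m, \mu|_{\O_m})$ and to the function $f|_{\O_m}$, the set $B^{(m)} := \{k \in \RR : \mu(\{f = k\} \cap \O_m) > 0\}$ is at most countable. Since $\{f = k\} = \bigcup_m (\{f = k\} \cap \O_m)$, we have $\mu(\{f = k\}) > 0$ only if $\mu(\{f = k\} \cap \O_m) > 0$ for some $m$; thus $B \subseteq \bigcup_{m \in \NN} B^{(m)}$, which is again at most countable. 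Consequently $B$ has zero Lebesgue measure, i.e.\ \eqref{eq:level} holds for a.e.\ $k \in \RR$, and the proof is complete.

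There is no real obstacle here; the only point requiring a moment's care is the reduction from $\sigma$-finite to finite, which is handled by the countable decomposition above.
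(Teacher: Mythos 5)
Your proof is correct, but it follows a genuinely different route from the paper's. You argue via atoms of the pushforward measure: in the finite-measure case the sets $B_n=\{k:\mu(\{f=k\})\ge 1/n\}$ are finite by disjointness of level sets, so the exceptional set $B=\bigcup_n B_n$ is countable, and the $\sigma$-finite case reduces to this by decomposing $\O$ into pieces of finite measure. The paper instead applies the Fubini--Tonelli theorem to the graph $G=\{(x,f(x)):x\in\O\}$ in the product space $\O\times\RR$ equipped with $\mu\otimes\lambda$: each vertical slice $G^x=\{f(x)\}$ is a singleton and hence Lebesgue-null, so the product measure of $G$ is zero, and therefore almost every horizontal slice $G_k=\{f=k\}$ is $\mu$-null. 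Your argument is more elementary (no product measure machinery) and in fact yields the strictly stronger conclusion that the exceptional set is at most countable rather than merely Lebesgue-null; the paper's argument is shorter and dispenses with the finite/$\sigma$-finite case distinction, at the cost of invoking Fubini--Tonelli and the (easy) measurability of the graph. Both are complete proofs of the stated lemma.
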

\begin{proof}
We use the Fubini--Tonelli theorem to measure the graph $G=\{(x,f(x))\,:\,x\in \O\}$ of $f$
in $\Omega\times\RR$. Denote by $\lambda$ the Lebesgue measure on $\RR$.
For a given $k\in\RR$, the slice $G_k$ of $G$ at $k$ in the first direction is
$G_k=\{x\in\O\,:\,f(x)=k\}$. For a fixed
$x\in\O$, the slice $G^x$ of $G$ at $x$ in the second direction is
$G^x=\{f(x)\}$. By Fubini--Tonelli, we therefore have
\[
\int_{\RR}\mu(\{x\in\O\,:\,f(x)=k\}) d\lambda(k)=\int_\O \lambda(\{f(x)\}) d\mu(x).
\]
Since $\lambda(\{f(x)\})=0$ for all $x\in\O$, this shows that $\int_{\RR}\mu(\{x\in\O\,:\,f(x)=k\}) d\lambda(k)=0$
and the conclusion follows. \end{proof}

\begin{lemma}\label{lem:char}
Let $(\O, \Sigma, \mu)$ be a $\sigma$-finite measure space and for every 
$\eps>0$ let $f_\eps :\O\to\RR$ be measurable. Suppose there is a measurable 
function $f:\O\to\RR$ such that $f_\eps \to f$ almost-everywhere as $\eps\to0$. 
Then for every $k\in\RR$ satisfying \eqref{eq:level}, 
\begin{equation*}
\mathbf{1}_{\{ f_\eps > k\}} \to \mathbf{1}_{\{ f > k\}} \quad \mbox { a.e.\ as }\eps\to0.
\end{equation*}
\end{lemma}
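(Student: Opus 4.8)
The plan is to argue pointwise on the complement of a null set. Let $N\subset\O$ be the set on which $f_\eps$ fails to converge to $f$, so $\mu(N)=0$ by hypothesis, and let $Z=\{f=k\}$, which is also null since $k$ satisfies \eqref{eq:level}. Put $G=\O\setminus(N\cup Z)$, so that $\mu(\O\setminus G)=0$; it then suffices to prove that $\mathbf{1}_{\{f_\eps>k\}}(x)\to\mathbf{1}_{\{f>k\}}(x)$ for every $x\in G$. Note that each $\{f_\eps>k\}$ and $\{f>k\}$ is measurable because $f_\eps$ and $f$ are, so the statement makes sense.

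Now fix $x\in G$. Since $x\notin Z$, exactly one of $f(x)>k$ or $f(x)<k$ holds. If $f(x)>k$, then $\mathbf{1}_{\{f>k\}}(x)=1$; because $x\notin N$, the real sequence (net) $\eps\mapsto f_\eps(x)$ converges to $f(x)$ as $\eps\to0$, so there is $\eps_0>0$ with $f_\eps(x)>k$, hence $\mathbf{1}_{\{f_\eps>k\}}(x)=1$, for all $\eps<\eps_0$. If instead $f(x)<k$, then $\mathbf{1}_{\{f>k\}}(x)=0$ and the same convergence provides $\eps_0>0$ with $f_\eps(x)<k$, hence $\mathbf{1}_{\{f_\eps>k\}}(x)=0$, for all $\eps<\eps_0$. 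In either case $\mathbf{1}_{\{f_\eps>k\}}(x)\to\mathbf{1}_{\{f>k\}}(x)$, which completes the argument.

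There is essentially no obstacle. The only genuine use of the hypotheses is that the hypersurface-like "transition" set $\{f=k\}$ — where an arbitrarily small perturbation of $f$ could flip the value of the indicator — is negligible, and this is precisely the content of \eqref{eq:level}; $\sigma$-finiteness is needed only to make sense of Lemma~\ref{lem:levelset}, which in turn guarantees a plentiful supply of admissible levels $k$ for the applications. The single point worth stating carefully is the reading of "$f_\eps\to f$ a.e.\ as $\eps\to0$": it means that for a.e.\ $x$ the function $\eps\mapsto f_\eps(x)$ tends to $f(x)$ as $\eps\to0^+$, which is exactly what legitimises the selection of $\eps_0$ above.
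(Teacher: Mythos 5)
Your argument is correct and is essentially identical to the paper's own proof: both excise the null sets $\{f=k\}$ and $\{f_\eps\not\to f\}$ and then argue pointwise, using that $f(x)\neq k$ forces $f_\eps(x)$ to eventually lie strictly on the same side of $k$ as $f(x)$. No further comment is needed.
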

\begin{proof}
Take $k$ such that $A=\{x\in \O\,:\,f(x)=k\}$ is null, and let
$B$ be the null set $\{x\in\O\,:\,f_\eps(x)\not\to f(x)\}$. If $x\not\in A\cup B$ we have
either $f(x)>k$ or $f(x)<k$. In each respective case, for $\eps$ sufficiently small, $f_\eps(x)>k$
(respectively $f_\eps(x)<k$) and thus $\mathbf{1}_{\{ f_\eps > k\}} = \mathbf{1}_{\{ f > k\}}$. 
\end{proof}

\begin{proposition} \label{prop:welldefined}
Let $f,v \in \Leb{2}{2}$ be such that $f$ has a $\{ v > 0 \}$-gradient in the 
sense of Definition \ref{def:ugrad}. Then
\begin{enumerate}[(i)]
\item The $\{v>0\}$-gradient is independent of the choice of sequence $(\eta_{i})_{i\in\NN}$. 
\item $\gradf{v}{\eta_i }=0$ on $\{v\leq\eta_i\}$.
\item The $\{ v > 0 \}$-gradient is independent of the choice of sequence $(v_\eps)_{\eps>0}$.
\end{enumerate}
\end{proposition}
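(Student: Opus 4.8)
The plan is to treat the three assertions roughly in the order (ii), (i), (iii), since (ii) is essentially a consequence of the defining weak convergence and is used in the other two parts. For (ii), fix $i\in\NN$ and recall that $\ch_{\{v_\eps>\eta_i\}}\nabla f_\eps\weakto\gradf{v}{\eta_i}$ weakly in $\Leb[d]{2}{2}$. Since $\meas(\{v=\eta_i\})=0$ and $v_\eps\to v$ a.e., Lemma~\ref{lem:char} gives $\ch_{\{v_\eps>\eta_i\}}\to\ch_{\{v>\eta_i\}}$ a.e., hence also $\ch_{\{v_\eps\le\eta_i\}}\to\ch_{\{v\le\eta_i\}}$ a.e.\ and boundedly in every $L^p$, $p<\infty$. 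Testing the weak convergence against $\ch_{\{v\le\eta_i\}}\bpsi$ for arbitrary $\bpsi\in\Leb[d]{2}{2}$, and observing that $\ch_{\{v_\eps>\eta_i\}}\ch_{\{v\le\eta_i\}}\to0$ a.e.\ and boundedly, one passes to the limit to find $\int_0^T\int_\O\gradf{v}{\eta_i}\cdot\ch_{\{v\le\eta_i\}}\bpsi=0$; by density this yields $\ch_{\{v\le\eta_i\}}\gradf{v}{\eta_i}=0$, which is (ii). A symmetric manipulation shows that $\gradf{v}{\eta_i}$ is in fact the weak limit of $\ch_{\{v>\eta_i\}}\nabla f_\eps$ read against test functions supported in $\{v>\eta_i\}$, a remark that makes the comparison arguments below transparent.

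For (i), suppose $(\eta_i)_{i}$ and $(\tilde\eta_j)_{j}$ are two admissible sequences (with the \emph{same} underlying $(f_\eps)$, $(v_\eps)$), producing gradients $\gradf{v}{\eta_i}$ and $\widetilde{\nabla}_{\{v>\tilde\eta_j\}}f$, and hence two candidate functions $g:=\gradf{v}{0}$ and $\tilde g$. It suffices to show $g=\tilde g$ a.e.\ on $\{v>0\}$ (both are $0$ on $\{v=0\}$ by construction). Fix $i$; on $\{v>\eta_i\}$ we must compare $\gradf{v}{\eta_i}$ with $\widetilde{\nabla}_{\{v>\tilde\eta_j\}}f$ for $\tilde\eta_j<\eta_i$ (which exists since $\tilde\eta_j\to0^+$). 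The key observation is that $\ch_{\{v_\eps>\eta_i\}}\nabla f_\eps=\ch_{\{v_\eps>\eta_i\}}\cdot\ch_{\{v_\eps>\tilde\eta_j\}}\nabla f_\eps$ whenever $\tilde\eta_j\le\eta_i$. Passing to the weak limit in $\Leb[d]{2}{2}$ on the left, and using that $\ch_{\{v_\eps>\eta_i\}}\to\ch_{\{v>\eta_i\}}$ a.e.\ and boundedly together with $\ch_{\{v_\eps>\tilde\eta_j\}}\nabla f_\eps\weakto\widetilde{\nabla}_{\{v>\tilde\eta_j\}}f$ weakly, one obtains (product of a.e.-bounded-convergent times weakly-convergent, tested against $L^2$) that $\gradf{v}{\eta_i}=\ch_{\{v>\eta_i\}}\widetilde{\nabla}_{\{v>\tilde\eta_j\}}f$. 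Since $\tilde g=\widetilde{\nabla}_{\{v>\tilde\eta_j\}}f$ on $\{v>\tilde\eta_j\}\supset\{v>\eta_i\}$, this reads $\gradf{v}{\eta_i}=\tilde g$ a.e.\ on $\{v>\eta_i\}$, i.e.\ $g=\tilde g$ on $\{v>\eta_i\}$; letting $i\to\infty$ and using $\bigcup_i\{v>\eta_i\}=\{v>0\}$ up to a null set (which in turn uses $\eta_i\to0^+$ and $\meas(\{v=\eta_i\})=0$) finishes (i). If the two sequences also carry \emph{different} approximating families, the same computation applies verbatim provided one first invokes the conclusion of (iii) to identify the gradients built from different families; logically it is cleanest to prove (iii) independently and then note that (i) follows in full generality.

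For (iii), let $(f_\eps,v_\eps)$ and $(f_\eps,v'_\eps)$ be two admissible families sharing the same $(f_\eps)$ and the same weak limit $f$ but with possibly different a.e.-approximations $v_\eps\to v$, $v'_\eps\to v$ of the \emph{same} $v$, and with admissible threshold sequences $(\eta_i)$, $(\eta'_j)$ respectively. The claim reduces to showing, for each pair $\eta_i,\eta'_j$ with (say) $\eta'_j<\eta_i$, that $\ch_{\{v>\eta_i\}}\gradf[{}]{v}{\eta_i}=\ch_{\{v>\eta_i\}}\nabla'_{\{v>\eta'_j\}}f$ a.e. The difficulty here, and the step I expect to be the main obstacle, is that $\ch_{\{v_\eps>\eta_i\}}$ and $\ch_{\{v'_\eps>\eta'_j\}}$ need not be nested for each fixed $\eps$: $v_\eps$ and $v'_\eps$ converge to the \emph{same} limit but are otherwise unrelated, so the clean algebraic identity used in (i) is unavailable. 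The remedy is to compare both against the limit: since $v_\eps\to v$ and $v'_\eps\to v$ a.e., and $\meas(\{v=\eta_i\})=\meas(\{v=\eta'_j\})=0$, Lemma~\ref{lem:char} gives $\ch_{\{v_\eps>\eta_i\}}\to\ch_{\{v>\eta_i\}}$ and $\ch_{\{v'_\eps>\eta'_j\}}\to\ch_{\{v>\eta'_j\}}$ a.e. Then $\ch_{\{v_\eps>\eta_i\}}-\ch_{\{v'_\eps>\eta_i\}}\to0$ a.e.\ and boundedly, so $\ch_{\{v_\eps>\eta_i\}}\nabla f_\eps$ and $\ch_{\{v'_\eps>\eta_i\}}\nabla f_\eps$ have the same weak limit in $\Leb[d]{2}{2}$ (their difference tends to $0$ when tested against any $\bpsi\in\Leb[d]{2}{2}$, being a product of an a.e.-bounded-null sequence with a weakly convergent one). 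Hence the $\{v>\eta_i\}$-gradient is unchanged if we replace $v_\eps$ by $v'_\eps$ while keeping the threshold $\eta_i$; combined with part~(i) (independence of the threshold sequence, now applied within the single family $(f_\eps,v'_\eps)$), this shows that the two constructions yield the same $\gradf{v}{\eta_i}$ on $\{v>\eta_i\}$, and letting $i\to\infty$ as before gives $\gradf{v}{0}=\nabla'_{\{v>0\}}f$ a.e. This also closes the remaining case of (i) where the approximating families differ. Throughout, the only analytic input beyond Lemmas~\ref{lem:levelset}--\ref{lem:char} is the elementary fact that the product of a uniformly bounded a.e.-convergent sequence with a weakly $L^2$-convergent sequence converges weakly in $L^2$ to the product of the limits, applied repeatedly.
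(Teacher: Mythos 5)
Your parts (i) and (ii) follow essentially the paper's own argument: the identity $\ch_{\{v_\eps>\eta_i\}}\nabla f_\eps=\ch_{\{v_\eps>\eta_i\}}\ch_{\{v_\eps>\zeta\}}\nabla f_\eps$ for $\zeta\le\eta_i$, Lemma \ref{lem:char}, and the fact that a uniformly bounded a.e.-convergent factor times a \emph{weakly convergent} factor passes to the limit. In each of those steps you correctly group the product so that the weakly convergent factor is one of the sequences $\ch_{\{v_\eps>\cdot\}}\nabla f_\eps$ that Definition \ref{def:ugrad} actually controls. These parts are fine.

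The gap is in (iii). You assert that $(\ch_{\{v_\eps>\eta_i\}}-\ch_{\{v'_\eps>\eta_i\}})\nabla f_\eps\weakto 0$ because it is ``a product of an a.e.-bounded-null sequence with a weakly convergent one.'' As grouped, the second factor would have to be $\nabla f_\eps$, but Definition \ref{def:ugrad} gives no weak convergence and, more importantly, no uniform $\Leb[d]{2}{2}$ bound on $\nabla f_\eps$ itself --- only on the truncated sequences $\ch_{\{v_\eps>\eta_i\}}\nabla f_\eps$. In the intended application ($f_\eps=\conc{\eps}$ with vanishing molecular diffusion) the full gradients are genuinely unbounded, so this is not a removable technicality. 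The step can be repaired exactly as in the paper: write $\ch_A-\ch_B=\ch_A\ch_{B^c}-\ch_B\ch_{A^c}$ with $A=\{v_\eps>\eta_i\}$, $B=\{v'_\eps>\eta_i\}$, so that
$(\ch_A-\ch_B)\nabla f_\eps=\ch_{\{v'_\eps\le\eta_i\}}\bigl(\ch_{\{v_\eps>\eta_i\}}\nabla f_\eps\bigr)-\ch_{\{v_\eps\le\eta_i\}}\bigl(\ch_{\{v'_\eps>\eta_i\}}\nabla f_\eps\bigr)$;
each term is now a bounded a.e.-convergent characteristic function times a controlled weakly convergent sequence, the weak limit is $\ch_{\{v\le\eta_i\}}\chi_{\eta_i}-\ch_{\{v\le\eta_i\}}\ol{\chi}_{\eta_i}$, and both terms vanish by your part (ii). With that substitution your argument closes; note also that (ii) is thus an essential input to (iii), not just to (i).
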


\begin{proof}
Fix the sequences $(f_\eps)_{\eps>0}, (v_\eps)_{\eps>0}$ in Definition \ref{def:ugrad} and 
let $(\eta_{i})_{i\in\NN}$ and $(\zeta_{i})_{i\in\NN}$ 
be two sequences of real numbers such that for every $i\in\NN$, 
\begin{equation*}
\meas(\{ v = \eta_i \}) = \meas(\{ v = \zeta_i \}) = 0.
\end{equation*}
Let $\chi_{\eta_i}, \chi_{\zeta_i} \in \Leb[d]{2}{2}$ be such that as $\eps\to0$,
\begin{align*}
\mathbf{1}_{\{ v_\eps > \eta_i \}}\nabla f_\eps &\rightharpoonup \chi_{\eta_i} 
\quad \mbox{weakly in $\Leb[d]{2}{2}$, and}\\
\mathbf{1}_{\{ v_\eps > \zeta_i \}}\nabla f_\eps &\rightharpoonup \chi_{\zeta_i} \quad \mbox{weakly in $\Leb[d]{2}{2}$}.
\end{align*}
It suffices to show that for any $i\in\NN$, 
$\chi_{\eta_{i}}=\chi_{\zeta_{i}}$ on $\{ v > \eta_i \}\cap\{ v > \zeta_i \}$.
Without loss of generality, assume that $\eta_i > \zeta_i$ so that 
$\{ v > \eta_i \}\cap\{ v > \zeta_i \} = \{ v > \eta_i \}$.
We have
\begin{equation}\label{eq:relation}
\ch_{\{ v_\eps > \eta_i \}}\nabla f_\eps = \ch_{\{ v_\eps > \eta_i \}}\ch_{\{ v_\eps > \zeta_i \}}\nabla f_\eps. 
\end{equation}
Thanks to Lemma \ref{lem:char}, as $\eps\to0$,
\begin{align}
\ch_{\{ v_\eps > \eta_{i} \}}\nabla f_\eps &\rightharpoonup \chi_{\eta_i}\quad\mbox{weakly in }\Leb[d]{2}{2}, \nonumber\\
\ch_{\{ v_\eps > \eta_{i} \}} &\to \ch_{\{ v > \eta_{i} \}}\quad \mbox{a.e.\ on }\O\times(0, T), \mbox{ and} \label{eq:second}\\
\ch_{\{ v_\eps > \zeta_{i} \}}\nabla f_\eps &\rightharpoonup \chi_{\zeta_{i}}\quad\mbox{weakly in }\Leb[d]{2}{2}\nonumber.
\end{align}
Passing to the weak limit in $\Leb[d]{2}{2}$ on \eqref{eq:relation} shows that on $\{ v > \eta_i \}$,
$\chi_{\eta_{i}}=\chi_{\zeta_{i}}$ in $\Leb[d]{2}{2}$,
which proves (i). 

For (ii), we have
\begin{align}
\mathbf{1}_{\{ v_\eps > \eta_i \}}\nabla f_\eps &\rightharpoonup \gradf{v}{\eta_i } \quad \mbox{ weakly in $\Leb[d]{2}{2}$, and}\label{eq:a}\\
\mathbf{1}_{\{ v_\eps > \eta_i \}} &\to \mathbf{1}_{\{ v > \eta_i \}} \quad \mbox{ in $\Leb{2}{2}$}. \nonumber
\end{align}
Then 
\begin{equation}
\mathbf{1}_{\{ v_\eps > \eta_i \}}\nabla f_\eps = \mathbf{1}_{\{ v_\eps > \eta_i \}}
\mathbf{1}_{\{ v_\eps > \eta_i \}}\nabla f_\eps \to \mathbf{1}_{\{ v > \eta_i \}}\gradf{v}{\eta_i } 
 \mbox { in }\mathcal{D'}(\O\times(0,T)). \label{eq:b}
\end{equation}
Comparing \eqref{eq:a} and \eqref{eq:b}, we see that
$\gradf{v}{\eta_i}=\ch_{\{ v > \eta_i \}}\gradf{v}{\eta_i}$ in $\mathcal{D'}(\O\times(0,T))$,
which shows that $\gradf{v}{\eta_i } = 0$ on $\{ v \leq \eta_i \}$. 

For (iii), fix the sequence $(f_\eps)_{\eps>0}$ and let $(v_\eps)_{\eps>0}$ and 
$(\bar{v}_\eps)_{\eps>0}$ be two sequences in $\Leb{2}{2}$ such that as $\eps\to0$,
$v_\eps\to v$ and $\bar{v}_\eps\to v$, both almost-everywhere $\O\times(0, T)$.
Let $\eta_i > 0$ be such that $\meas(\{ v = \eta_i \}) = 0$ and suppose that 
there are functions $\chi_{\eta_i }, \ol{\chi}_{\eta_i } \in \Leb[d]{2}{2}$ such that, as $\eps\to0$,
\begin{align*}
\ch_{\{ v_\eps > \eta_i \}}\nabla f_\eps &\rightharpoonup \chi_{\eta_i } 
\quad \mbox{weakly in $\Leb[d]{2}{2}$, and}\\
\ch_{\{ \bar{v}_{\eps} > \eta_i \}}\nabla f_\eps &\rightharpoonup \ol{\chi}_{\eta_i } 
\quad \mbox{weakly in $\Leb[d]{2}{2}$}.
\end{align*}
Observe that by Lemma \ref{lem:char},
\begin{multline*}
(\ch_{\{ v_\eps > \eta_i \}} - \ch_{\{ \bar{v}_\eps > \eta_i \}})\nabla f_\eps \\
= \ch_{\{ \bar{v}_\eps \leq \eta_i \}}(\ch_{\{ v_\eps > \eta_i\}}\nabla f_\eps) - 
\ch_{\{ v_\eps \leq \eta_i \}}(\ch_{\{ \bar{v}_\eps > \eta_i \}}\nabla f_\eps)\\
\weakto \ch_{\{ v \leq \eta_{i}\}}\chi_{\eta_{i}} - 
\ch_{\{ v \leq \eta_{i}\}}\bar{\chi}_{\eta_i } \mbox{ weakly in $\Leb[d]{2}{2}$ as }\eps\to0.
\end{multline*}
By (ii), the last term vanishes, which shows that $\chi_{\eta_i } = \ol{\chi}_{\eta_i }$ 
in $\Leb[d]{2}{2}$.
\end{proof}

\section{Convergence lemmas} \label{sec:appb}

A similar result to the following appeared in Kazhikhov \cite{kaz98} with
stronger assumptions. Here we give a proof of this ``compensated compactness'' lemma
by following the ideas in the proof of Droniou--Eymard \cite[Theorem 5.4]{DE15}.

\begin{theorem}\label{lem:cc}
For $\psi\in C^\infty_c(\O)$ and $\gamma\in L^1(\O\times (0,T))$, define
$F_\gamma^\psi\in L^1(0,T)$ by $F_\gamma^\psi(t)=\int_\O \gamma(x,t)\psi(x)\ud x$.
Let $a,p\in (1,\infty)$ and $(\alpha_\eps)_{\eps>0}$ and $(\beta_\eps)_{\eps>0}$ be sequences such that
\begin{align*}
&(\alpha_\eps)_{\eps>0} \mbox{ is bounded in }L^{p}(0, T; W^{1,a}(\O)),\\
&\alpha_\eps \rightharpoonup \alpha \mbox{ weakly in }\Leb{p}{a},\\
&\beta_\eps \rightharpoonup \beta \mbox{ weakly in }\Leb{p'}{a'} \mbox{ and}\\
&\forall \psi\in C^\infty_c(\O)\,,\;(F_{\beta_\eps}^\psi)_{\eps>0} \mbox{ is bounded in }
BV(0,T).
\end{align*}
Then up to a subsequence,
\begin{equation*}
\alpha_\eps\beta_\eps \rightharpoonup \alpha\beta \mbox{ in }\distt.
\end{equation*}
\end{theorem}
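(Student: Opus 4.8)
\emph{Overview.} The plan is to test $\alpha_\eps\beta_\eps$ against arbitrary $\zeta\in C^\infty_c(\O\times(0,T))$ and to exploit two complementary compactnesses: $(\alpha_\eps)_{\eps>0}$ is compact in the space variable, being bounded in $L^p(0,T;W^{1,a}(\O))$, while $(\beta_\eps)_{\eps>0}$ is compact in the time variable through its spatial averages $F_{\beta_\eps}^\psi$, which are bounded in $BV(0,T)$. Neither sequence converges strongly on its own, so the heart of the matter is to reorganise the product until only genuinely one-dimensional (in time) weak$\times$strong pairings survive; the $BV(0,T)$ bound is exactly what promotes $F_{\beta_\eps}^\psi$ from weakly to strongly convergent in $L^{p'}(0,T)$. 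I expect this reorganisation, performed below by mollifying $\alpha_\eps$ in $x$ and then approximating the mollifying kernel by a finite-rank one, to be the main technical step.

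\emph{Reduction and temporal compactness.} By H\"older's inequality in both variables, $(\alpha_\eps\beta_\eps)_{\eps>0}$ is bounded in $L^1(\O\times(0,T))$ and $\alpha\beta\in L^1(\O\times(0,T))$; hence the forms $\zeta\mapsto\int_0^T\int_\O\zeta\alpha_\eps\beta_\eps$ are equibounded on $C_0(\O\times(0,T))$, with limiting form $\zeta\mapsto\int_0^T\int_\O\zeta\alpha\beta$. Since finite sums $\psi(x)\chi(t)$ with $\psi\in C^\infty_c(\O)$ and $\chi\in C^\infty_c(0,T)$ are uniformly dense in $C_0(\O\times(0,T))$, it suffices to prove $\int_0^T\chi\int_\O\psi\,\alpha_\eps\beta_\eps\to\int_0^T\chi\int_\O\psi\,\alpha\beta$ for all such $\psi$, $\chi$. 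Next, fixing a sequence $(\psi_k)_{k}$ dense in $L^a(\O)$, the compact embedding $BV(0,T)\hookrightarrow\hookrightarrow L^{p'}(0,T)$ together with a diagonal extraction yields a subsequence (not relabelled) along which $F_{\beta_\eps}^{\psi_k}\to F_\beta^{\psi_k}$ in $L^{p'}(0,T)$ for every $k$, the limit being identified through the weak convergence $\beta_\eps\weakto\beta$ in $\Leb{p'}{a'}$. Because $\norm{F_{\beta_\eps}^\psi-F_{\beta_\eps}^{\psi'}}_{L^{p'}(0,T)}\le\norm{\beta_\eps}_{\Leb{p'}{a'}}\norm{\psi-\psi'}_{L^a(\O)}$ uniformly in $\eps$, this extends to $F_{\beta_\eps}^\psi\to F_\beta^\psi$ in $L^{p'}(0,T)$ for \emph{every} $\psi\in L^a(\O)$.

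\emph{Spatial regularisation and conclusion.} Let $\rho_\delta$ be a standard mollifier. For $\delta>0$ smaller than $\mathrm{dist}(\supp\psi,\partial\O)$, set $\alpha_\eps^\delta(\cdot,t)=\alpha_\eps(\cdot,t)\ast\rho_\delta$; the bound on $(\alpha_\eps)$ in $L^p(0,T;W^{1,a}(\O))$ gives $\norm{\alpha_\eps-\alpha_\eps^\delta}_{\Leb{p}{a}}\le C\delta$ uniformly in $\eps$, and likewise $\norm{\alpha-\alpha^\delta}_{\Leb{p}{a}}\le C\delta$ for the (weak) limit $\alpha$. Pairing these remainders with $\beta_\eps$, resp.\ $\beta$, via H\"older, it remains to show that for each fixed $\delta$, $\int_0^T\chi\int_\O\psi\,\alpha_\eps^\delta\beta_\eps\to\int_0^T\chi\int_\O\psi\,\alpha^\delta\beta$ as $\eps\to0$. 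To that end, approximate the $C^\infty$ kernel $(x,y)\mapsto\psi(x)\rho_\delta(x-y)$ uniformly, up to an error $\tau>0$, on the compact set where it is supported, by a finite sum $\sum_{m=1}^N\phi_m(x)\omega_m(y)$ with $\phi_m\in C^\infty_c(\O)$ and $\omega_m\in L^\infty(\O)$. This produces $\psi(x)\alpha_\eps^\delta(x,t)=\sum_{m=1}^N\phi_m(x)\lambda_{m,\eps}(t)+e_\eps(x,t)$, where $\lambda_{m,\eps}(t)=\int_\O\alpha_\eps(y,t)\omega_m(y)\ud y$ and $\norm{e_\eps(\cdot,t)}_{L^\infty(\O)}\le C\tau\norm{\alpha_\eps(\cdot,t)}_{L^a(\O)}$, so that the $e_\eps$-contribution is $O(\tau)$ uniformly in $\eps$. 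For each fixed $m$, the weak convergence $\alpha_\eps\weakto\alpha$ in $\Leb{p}{a}$ gives $\lambda_{m,\eps}\weakto\lambda_m$ in $L^p(0,T)$ with $\lambda_m(t)=\int_\O\alpha(y,t)\omega_m(y)\ud y$, while $F_{\beta_\eps}^{\phi_m}\to F_\beta^{\phi_m}$ strongly in $L^{p'}(0,T)$ by the previous step; hence $\int_0^T\chi\,\lambda_{m,\eps}\,F_{\beta_\eps}^{\phi_m}\to\int_0^T\chi\,\lambda_m\,F_\beta^{\phi_m}$. Running the same finite-rank expansion on $\psi\alpha^\delta$ identifies this limit, up to $O(\tau)$, with $\int_0^T\chi\int_\O\psi\,\alpha^\delta\beta$. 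Passing successively to the limits $\eps\to0$, $\tau\to0$, and $\delta\to0$ concludes the proof.
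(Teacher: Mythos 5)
Your proof is correct and follows essentially the same strategy as the paper's: use the $L^p(0,T;W^{1,a}(\O))$ bound to replace $\alpha_\eps$ (up to a uniformly small error) by a finite-rank-in-space function, then conclude via the one-dimensional weak--strong pairing $\int_0^T \lambda_{m,\eps}\,F_{\beta_\eps}^{\phi_m}$, where the $BV(0,T)$ bound upgrades $F_{\beta_\eps}^{\phi_m}\to F_\beta^{\phi_m}$ to strong $L^{p'}(0,T)$ convergence. The only (cosmetic) difference is how the finite-rank reduction is implemented --- you mollify and then apply Stone--Weierstrass to the kernel, whereas the paper averages over small cubes and then smooths the indicator functions.
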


\begin{proof}
Let
\[
\mathcal A(W,Z)= \int_0^T \int_\O W(x,t)Z(x,t)\ud x\ud t.
\]
We prove that for every $\phi\in C^\infty_c(\O\times (0,T))$,
\begin{equation}\label{cc.toshow}
\mathcal A(\alpha_\eps\phi,\beta_\eps)\to \mathcal A(\alpha\phi,\beta).
\end{equation}
In this proof, $C$ denotes a generic constant that does not depend on $\eps$.

\emph{Step 1: reduction to tensorial functions.}

Consider a covering $(C_\ell^\delta)_{\ell=1,\ldots,M}$
of $\O$ in cubes of length $\delta$. For $g\in L^a(\RR^d)$, define
\[
R_\delta g= \sum_{\ell=1}^M \left(\frac{1}{\meas(C_\ell^\delta)}\int_{C_\ell^\delta} g(x)\ud x\right)
\mathbf{1}_{C_\ell^\delta},
\]
where $\mathbf{1}$ denotes the characteristic function. In what follows, take $g\in W^{1,a}_0(\O)$.
Using Jensen's inequality, a linear
change of variable, and a standard inequality for functions in $W^{1,a}_0(\O)$ (extended by $0$
outside $\O$),
\[
\norm{R_\delta g-g}_{L^a(\O)}\le C\sup_{z\in (-\delta,\delta)^d} \norm{g(\cdot+z)-g}_{L^a(\RR^d)}
\le C\delta \norm{g}_{W^{1,a}_0(\O)}.
\]
The sequence of functions $(\alpha_\eps\phi)_{\eps>0}$ is bounded in $L^p(0,T;W^{1,a}_0(\O))$,
the zero boundary condition coming from the support of $\phi$. Hence
\[
\norm{R_\delta (\alpha_\eps\phi)-(\alpha_\eps\phi)}_{\Leb{p}{a}}\le C\delta.
\]
By the weak convergence of $(\alpha_\eps)_{\eps>0}$, this estimate also holds with $\alpha_\eps$
replaced by $\alpha$.
Using the boundedness of $(\beta_\eps)_{\eps>0}$ in $\Leb{p'}{a'}$ and the H\"older inequality,
\begin{equation}\label{cv.A.1}
|\mathcal A(\alpha_\eps\phi,\beta_\eps)-\mathcal A(\alpha\phi,\beta)|
\le C\delta + |\mathcal A(R_\delta(\alpha_\eps\phi),\beta_\eps)-\mathcal A(R_\delta(\alpha\phi),\beta)|.
\end{equation}
For a fixed $\delta$, assume that we can prove
that
\begin{equation}\label{step1:toprove}
\mathcal A(R_\delta(\alpha_\eps\phi),\beta_\eps)\to \mathcal A(R_\delta(\alpha\phi),\beta)\quad
\mbox{ as $\eps\to0$}.
\end{equation}
Then taking the limit superior as $\eps\to0$ and then the limit as $\delta\to 0$
of \eqref{cv.A.1} would show that \eqref{cc.toshow} holds.

\emph{Step 2: reduction to smooth functions.}

By construction of $R_\delta$, we have
\[
\begin{aligned}
&R_\delta(\alpha_\eps\phi)(x,t)=\sum_{\ell=1}^M \xi_{\eps,\ell}(t)\mathbf{1}_{C_\ell^\delta}(x),\\
&\mbox{with}\quad
\xi_{\eps,l}(t)=\frac{1}{\meas(C_\ell^\delta)}\int_{C_\ell^\delta}\alpha_\eps(x,t)\phi(x,t)\ud x.
\end{aligned}
\]
Hence by the bilinearity of $\mathcal A$, \eqref{step1:toprove} follows if we can establish that
\begin{equation}\label{step2.toprove.1}
\mathcal A(\xi_{\eps,\ell}\otimes \mathbf{1}_{C_\ell^\delta},\beta_\eps)
\to \mathcal A(\xi_\ell\otimes \mathbf{1}_{C_\ell^\delta},\beta)
\quad\mbox{ as $\eps\to0$},
\end{equation}
where 
\[
\xi_\ell(t)=\frac{1}{\meas(C_\ell^\delta)}\int_{C_\ell^\delta}\alpha(x,t)\phi(x,t)\ud x.
\]
Let $\psi\in C^\infty_c(\O)$. Using the bounds on $(\alpha_\eps)_{\eps>0}$ and $(\beta_\eps)_{\eps>0}$ we have
\[
\left|\mathcal A(\xi_{\eps,\ell}\otimes \mathbf{1}_{C_\ell^\delta},\beta_\eps)-
\mathcal A(\xi_{\eps,\ell}\otimes \psi,\beta_\eps)\right|\leq C \norm{\mathbf{1}_{C_\ell^\delta}-\psi}_{L^a(\O)}
\]
where $C$ may depend on $\ell$ and $\delta$, but not on $\eps$. A similar estimate holds
with $\xi_\ell$ and $\beta$ instead of $\xi_{\eps,\ell}$ and $\beta_\eps$.
Since $\Vert \mathbf{1}_{C_\ell^\delta}-\psi\Vert_{L^a(\O)}$ can be made arbitrarily small by an appropriate
choice of $\psi$, we see that \eqref{step2.toprove.1} holds provided that, for any $\psi\in C^\infty_c(\O)$,
\[
\mathcal A(\xi_{\eps,\ell}\otimes \psi,\beta_\eps)\to \mathcal A(\xi_\ell\otimes \psi,\beta)
\quad\mbox{ as $\eps\to0$}.
\]

\emph{Step 3: conclusion.}

We have
\[
\mathcal A(\xi_{\eps,\ell}\otimes \psi,\beta_\eps)=\int_0^T \xi_{\eps,\ell}(t) F_{\beta_\eps}^{\psi}(t)\ud t.
\]
The weak convergence of $(\beta_\eps)_{\eps>0}$
ensures that $F_{\beta_\eps}^\psi\to F_\beta^\psi$ in $\mathcal D'(0,T)$. Since $(F_{\beta_\eps}^\psi)_{\eps>0}$
is bounded in $BV(0,T)$, this convergence also holds in $L^{p'}(0,T)$.
On the other side, the weak convergence of $(\alpha_\eps)_{\eps>0}$ shows that
$\xi_{\eps,\ell}\to \xi_\ell$ weakly in $L^p(0,T)$. Hence as $\eps\to0$,
\[
\mathcal A(\xi_{\eps,\ell}\otimes \psi,\beta_\eps)=\int_0^T \xi_{\eps,\ell}(t) F_{\beta_\eps}^{\psi}(t)\ud t\to 
\int_0^T \xi_{\ell}(t) F_{\beta}^{\psi}(t)\ud t
=\mathcal A(\xi_{\ell}\otimes \psi,\beta). \qedhere
\]
\end{proof}

\begin{lemma}\label{pt.bound}
Let $E$ be a Banach space, $T>0$, and $(f_m)_{m\in\NN}$ be a bounded sequence in $L^1(0,T;E)$. 
Then for almost every $t\in (0,T)$,
there exists a subsequence $(f_{m_k})_{k\in\NN}$ such that $(f_{m_k}(t))_{k\in\NN}$ is bounded in $E$.
\end{lemma}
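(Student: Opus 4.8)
The plan is to reduce this vector-valued statement to a scalar one by passing to norms and then invoking Fatou's lemma. First I would set $g_m(t) := \norm{f_m(t)}_E$ for each $m \in \NN$ and $t \in (0,T)$. Since $f_m$ is strongly measurable (being an element of $L^1(0,T;E)$), the function $t \mapsto g_m(t)$ is measurable and nonnegative, and $\int_0^T g_m(t)\ud t = \norm{f_m}_{L^1(0,T;E)}$. By hypothesis there is a constant $C \geq 0$ with $\norm{f_m}_{L^1(0,T;E)} \leq C$ for all $m \in \NN$, so $\int_0^T g_m(t)\ud t \leq C$ uniformly in $m$.

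Next I would apply Fatou's lemma to the sequence $(g_m)_{m\in\NN}$ of nonnegative measurable functions: setting $h(t) := \liminf_{m\to\infty} g_m(t)$, one has
\[
\int_0^T h(t)\ud t \leq \liminf_{m\to\infty} \int_0^T g_m(t)\ud t \leq C < \infty.
\]
Consequently $h \in L^1(0,T)$, and in particular $h(t) < \infty$ for almost every $t \in (0,T)$.

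Finally, I would fix $t \in (0,T)$ with $h(t) < \infty$. By the definition of the limit inferior, there exists a subsequence $(m_k)_{k\in\NN}$ — which is allowed to depend on $t$ — such that $g_{m_k}(t) \to h(t)$ as $k \to \infty$. A convergent sequence of real numbers is bounded, so $(g_{m_k}(t))_{k\in\NN} = (\norm{f_{m_k}(t)}_E)_{k\in\NN}$ is bounded, that is, $(f_{m_k}(t))_{k\in\NN}$ is bounded in $E$, which is the assertion of the lemma.

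There is no genuine obstacle here; the only two points that merit a word of care are the measurability of $t\mapsto\norm{f_m(t)}_E$ (which is exactly the strong measurability built into the definition of $L^1(0,T;E)$) and the observation — consistent with the statement — that the extracted subsequence need only be defined after the point $t$ has been fixed, so no diagonalisation or uniform choice across $t$ is required.
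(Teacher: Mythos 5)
Your proof is correct and follows essentially the same route as the paper's: both arguments apply Fatou's lemma to $t\mapsto\liminf_{m\to\infty}\norm{f_m(t)}_E$ together with the uniform $L^1(0,T;E)$ bound to conclude that this limit inferior is finite almost everywhere, and then extract, for each such $t$, a subsequence of norms converging to the (finite) limit inferior. The only difference is presentational — the paper argues contrapositively via the null set of ``bad'' times, whereas you argue directly — and your remarks on measurability and the $t$-dependence of the subsequence are accurate.
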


\begin{proof}
Let $Z$ be the set of $t\in (0,T)$ such that no subsequence exists along which $\norm{f_m(t)}_E$ is bounded.
Then for every $t\in Z$, $\norm{f_m(t)}_E\to\infty$. Using Fatou's lemma and denoting by $M$ a bound
of $(f_m)_{m\in\NN}$ in $L^1(0,T;E)$ gives
\begin{align*}
\meas(Z) \times (\infty)={}&\int_A \liminf_{m\to\infty}\norm{f_m(t)}_E\ud t\\
\le{}&
\int_0^T\liminf_{m\to\infty}\norm{f_m(t)}_E\ud t\leq \liminf_{m\to\infty}\int_0^T \norm{f_m(t)}_E\ud t\le M.
\end{align*}
This shows that $\meas(Z)=0$. \end{proof}

The following two lemmas are proved in \cite{dt14}.

\begin{lemma} \label{lem:h}
Let $\O$ be a bounded subset of $\RR^{d}$, $d\in\NN$, and for $\eps>0$ 
let $H_\eps:\O\times\RR^d\to\RR$ be a Carath\'eodory function such that
\begin{itemize}
\item there exist positive constants $\ctel{hlem}$, $\gamma$ such that for a.e.\ $x\in\O$, 
\begin{equation*}
|H_\eps(x, \xi)|\leq\cter{hlem}(1 +|\xi|^{\gamma}) \mbox{\quad $\forall\xi\in\RR^{d}$, $\forall\eps>0$;}
\end{equation*}
\item there is a Carath\'eodory function $H:\O\times\RR^{d}\to\RR$ such that for a.e.\ $x\in\O$, 
\begin{equation*}
H_\eps(x,\cdot)\to H(x,\cdot)\mbox{\quad uniformly on compact sets as $\eps\to0$.}
\end{equation*} 
\end{itemize}
If $p$, $q\in [\max(1,\gamma),\infty)$ and $(u_\eps)_{\eps>0}\subset L^{p}(0,T; L^{q}(\O)^{d})$ is a sequence with $u_\eps \to u$ in $L^{p}(0,T; L^{q}(\O)^{d})$ as $\eps\to0$, then
$H_\eps(\cdot, u_\eps)\to H(\cdot,u)$ in $L^{p/\gamma}(0,T; L^{q/\gamma}(\O))$ as $\eps\to0$.
\end{lemma}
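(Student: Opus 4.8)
The plan is to combine the usual subsequence argument with the dominated convergence theorem in the mixed-norm space $L^{p/\gamma}(0,T;L^{q/\gamma}(\O))$. Since the candidate limit $H(\cdot,u)$ does not depend on any choice of subsequence, it suffices to show that every subsequence of $(H_\eps(\cdot,u_\eps))_{\eps>0}$ admits a further subsequence converging to $H(\cdot,u)$ in $L^{p/\gamma}(0,T;L^{q/\gamma}(\O))$. So I fix a subsequence. Because $u_\eps\to u$ strongly in $L^p(0,T;L^q(\O)^d)$, a Riesz--Fischer-type extraction (pass to a sub-subsequence along which $\sum_k\norm{u_{\eps_{k+1}}-u_{\eps_k}}_{L^p(0,T;L^q(\O)^d)}<\infty$ and dominate by the telescoping series; the triangle inequality and monotone convergence, applied slicewise in $x$ and then in $t$, keep this series in the mixed-norm space) produces a further subsequence, still written $(u_\eps)$, together with $g\in L^p(0,T;L^q(\O))$ such that $u_\eps\to u$ a.e.\ on $\O\times(0,T)$ and $|u_\eps|\le g$ a.e. By Fubini's theorem I may also assume that, for a.e.\ $t\in(0,T)$, $u_\eps(\cdot,t)\to u(\cdot,t)$ a.e.\ on $\O$, with $g(\cdot,t)\in L^q(\O)$ and $|u_\eps(\cdot,t)|\le g(\cdot,t)$ a.e.\ on $\O$; by the superposition (Nemytskii) measurability of Carath\'eodory functions, each $H_\eps(\cdot,u_\eps)$ is measurable.

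Next I establish the pointwise convergence $H_\eps(x,u_\eps(x,t))\to H(x,u(x,t))$ for a.e.\ $(x,t)$. Fix $x$ at which the growth bound holds, $H_\eps(x,\cdot)\to H(x,\cdot)$ uniformly on compacts, and $H(x,\cdot)$ is continuous, and fix $(x,t)$ at which moreover $u_\eps(x,t)\to u(x,t)$. Then the vectors $u_\eps(x,t)$ all lie in a fixed compact neighbourhood of $u(x,t)$, so $|H_\eps(x,u_\eps(x,t))-H(x,u_\eps(x,t))|\to0$ by uniform convergence on that compact set, while $H(x,u_\eps(x,t))\to H(x,u(x,t))$ by continuity; adding these gives the claim. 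The growth hypothesis yields the domination $|H_\eps(x,u_\eps(x,t))|\le C(1+|u_\eps(x,t)|^\gamma)\le C(1+g(x,t)^\gamma)$, and since $\O$ and $(0,T)$ have finite measure and $\norm{g(\cdot,t)^\gamma}_{L^{q/\gamma}(\O)}=\norm{g(\cdot,t)}_{L^q(\O)}^\gamma$, one has $1+g^\gamma\in L^{p/\gamma}(0,T;L^{q/\gamma}(\O))$. Applying the dominated convergence theorem first in $x$ gives $\norm{H_\eps(\cdot,u_\eps(\cdot,t))-H(\cdot,u(\cdot,t))}_{L^{q/\gamma}(\O)}\to0$ for a.e.\ $t$, with the $(p/\gamma)$-th powers of these norms dominated by the $L^1(0,T)$ function $t\mapsto (2C)^{p/\gamma}(1+\norm{g(\cdot,t)}_{L^q(\O)}^\gamma)^{p/\gamma}$; a second application of dominated convergence, now in $t$, yields $H_\eps(\cdot,u_\eps)\to H(\cdot,u)$ in $L^{p/\gamma}(0,T;L^{q/\gamma}(\O))$ along this further subsequence, which closes the argument.

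The step I expect to be the main obstacle is the reduction to a.e.\ convergence with a \emph{usable} dominating function: one cannot merely invoke ``a subsequence converges a.e.,'' because the domination has to sit in the mixed-norm space $L^p(0,T;L^q(\O))$ (so that, after raising to the $\gamma$-th power, it lands in $L^{p/\gamma}(0,T;L^{q/\gamma}(\O))$), which forces the Riesz--Fischer telescoping construction and a little care when passing the monotone-convergence argument through the two layers of the norm. Once that domination is in place, the Carath\'eodory hypotheses and the $\gamma$-growth are precisely calibrated so that the rest is a routine double application of dominated convergence, done slicewise in $t$ to accommodate the mixed norm.
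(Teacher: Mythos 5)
Your argument is correct. Note that the paper itself does not prove Lemma \ref{lem:h} --- it defers to \cite{dt14} --- but your proof is the standard one for such Nemytskii-type convergence results: the subsequence principle, a Riesz--Fischer extraction giving a.e.\ convergence together with a dominating function $g$ in the mixed-norm space, pointwise convergence of $H_\eps(x,u_\eps(x,t))$ via the splitting $|H_\eps(x,u_\eps)-H(x,u_\eps)|+|H(x,u_\eps)-H(x,u)|$ (uniform convergence on the compact set containing $\{u_\eps(x,t)\}\cup\{u(x,t)\}$ plus continuity of $H(x,\cdot)$), and a slicewise double application of dominated convergence; the only cosmetic slip is that $\norm{1+g(\cdot,t)^\gamma}_{L^{q/\gamma}(\O)}$ is bounded by $\meas(\O)^{\gamma/q}+\norm{g(\cdot,t)}_{L^q(\O)}^\gamma$ rather than $1+\norm{g(\cdot,t)}_{L^q(\O)}^\gamma$, which changes nothing since $\O$ is bounded.
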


\begin{lemma} \label{lem:ws}
Let $\O$ be a bounded, open subset of $\RR^{d}$ and for $\eps>0$, 
let $w_\eps:\O\times(0,T)\to\RR$ and $v_\eps:\O\times(0,T)\to\RR$ be such that as $\eps\to0$,
\begin{align*}
w_\eps &\to w \mbox{\quad strongly in $L^{r_{1}}(0,T;L^{s_{1}}(\O))$, and}\\
v_\eps &\to v \mbox{\quad weakly in $L^{r_{2}}(0,T; L^{s_{2}}(\O))$,}
\end{align*}
where $r_{1}, r_{2}, s_{1}, s_{2} \geq 1$ are such that $1/r_{1} + 1/r_{2} \leq 1$ 
and $1/s_{1} + 1/s_{2} \leq 1$. Suppose also that the sequence $(w_\eps v_\eps)_{\eps>0}$ 
is bounded in $L^{a}(0,T; L^{b}(\Omega))$, where $a,b \in (1,\infty)$. Then
$w_\eps v_\eps \to wv$ weakly in $L^{a}(0,T; L^{b}(\Omega))$.
\end{lemma}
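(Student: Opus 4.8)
The plan is to combine a reflexivity/subsequence argument with a density reduction to pairings against smooth, compactly supported test functions. Since $a,b\in(1,\infty)$, the space $\Leb{a}{b}$ is reflexive, so along any subsequence of $(w_\eps v_\eps)_{\eps>0}$ one can extract a further subsequence converging weakly in $\Leb{a}{b}$ to some limit $g$. The aim is to show $g=wv$, irrespective of the subsequence: uniqueness of the limit then forces the whole sequence to converge weakly to $wv$, and simultaneously yields $wv\in\Leb{a}{b}$ (a point not otherwise obvious, as explained at the end). Because $C^\infty_c(\O\times(0,T))$ is dense in $L^{a'}(0,T;L^{b'}(\O))$, identifying $g$ amounts to proving, for every $\phi\in C^\infty_c(\O\times(0,T))$, that $\int_0^T\int_\O w_\eps v_\eps\,\phi\ud x\ud t\to\int_0^T\int_\O wv\,\phi\ud x\ud t$.

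To establish this convergence I would write $w_\eps v_\eps\phi-wv\phi=(w_\eps-w)v_\eps\phi+w(v_\eps-v)\phi$ and treat the two terms separately. For the second term, the key observation is that $w\phi$ is an admissible test function for the weak convergence $v_\eps\weakto v$ in $\Leb{r_2}{s_2}$: the hypotheses $1/s_1+1/s_2\leq1$ and $1/r_1+1/r_2\leq1$ are equivalent to $s_1\geq s_2'$ and $r_1\geq r_2'$, so the embeddings $L^{s_1}(\O)\hookrightarrow L^{s_2'}(\O)$ and $L^{r_1}(0,T)\hookrightarrow L^{r_2'}(0,T)$ (valid because $\O$ is bounded and $(0,T)$ is finite), together with the boundedness and compact support of $\phi$, give $w\phi\in L^{r_2'}(0,T;L^{s_2'}(\O))$, i.e.\ an element of the dual of $\Leb{r_2}{s_2}$. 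Hence $\int_0^T\int_\O w(v_\eps-v)\phi\ud x\ud t\to0$.

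For the first term I would use a strong-times-weak Hölder estimate. A weakly convergent sequence is bounded, so $(v_\eps)_{\eps>0}$, and therefore $(v_\eps\phi)_{\eps>0}$, is bounded in $\Leb{r_2}{s_2}$. Applying Hölder's inequality in space in the form $\int_\O|FG|\leq\norm{F}_{L^{s_1}(\O)}\norm{G}_{L^{s_1'}(\O)}$ followed by $L^{s_2}(\O)\hookrightarrow L^{s_1'}(\O)$, and then Hölder in time with exponents $r_1,r_1'$ followed by $L^{r_2}(0,T)\hookrightarrow L^{r_1'}(0,T)$, one obtains $\big|\int_0^T\int_\O(w_\eps-w)v_\eps\phi\ud x\ud t\big|\leq C\,\norm{w_\eps-w}_{\Leb{r_1}{s_1}}\norm{v_\eps\phi}_{\Leb{r_2}{s_2}}\leq C\,\norm{w_\eps-w}_{\Leb{r_1}{s_1}}\to0$ by the strong convergence of $(w_\eps)_{\eps>0}$. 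Adding the two contributions gives the convergence of the pairings, hence $g=wv$, hence the lemma.

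The part demanding the most care --- though it is bookkeeping rather than a conceptual difficulty --- is the systematic use of the finiteness of $(0,T)$ and the boundedness of $\O$ to pass between Lebesgue exponents whenever the inequalities $1/r_1+1/r_2\leq1$ or $1/s_1+1/s_2\leq1$ are strict, checking at each step that the relevant product lands in the correct dual space (and noting that the degenerate cases $r_2=1$, resp.\ $s_2=1$, are forced by the hypotheses to come with $r_1=\infty$, resp.\ $s_1=\infty$, so that $w\phi$ still lies in the appropriate $L^\infty$-type space). The second genuinely necessary point is that, a priori, Hölder applied to $w\in\Leb{r_1}{s_1}$ and $v\in\Leb{r_2}{s_2}$ only places $wv$ in $L^\rho(0,T;L^\sigma(\O))$ with $1/\rho=1/r_1+1/r_2$ and $1/\sigma=1/s_1+1/s_2$, which need not embed into $\Leb{a}{b}$; it is precisely the assumed uniform bound on $(w_\eps v_\eps)_{\eps>0}$ in $\Leb{a}{b}$, together with the identification of the distributional limit above, that promotes $wv$ to an element of $\Leb{a}{b}$ and delivers the stated weak convergence there.
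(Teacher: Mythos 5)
Your argument is correct and complete: the decomposition $(w_\eps-w)v_\eps\phi + w(v_\eps-v)\phi$, the H\"older/embedding bookkeeping justified by $1/r_1+1/r_2\le1$, $1/s_1+1/s_2\le1$ and the boundedness of $\O$ and $(0,T)$, and the final upgrade from distributional to weak $\Leb{a}{b}$ convergence via reflexivity and the assumed uniform bound are exactly the standard route for this strong-times-weak product lemma. The paper itself does not prove Lemma \ref{lem:ws} but defers to \cite{dt14}, where the argument is of the same type, so there is nothing further to reconcile.
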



\bibliography{peaceman-nodiff.bib}{}

\begin{thebibliography}{10}

\bibitem{az04}
Y.~Amirat and A.~Ziani.
\newblock Asymptotic behavior of the solutions of an elliptic-parabolic system
  arising in flow in porous media.
\newblock {\em Z. Anal. Anwendungen}, 23(2):335--351, 2004.

\bibitem{anderson84}
M.P. Anderson.
\newblock Movement of contaminants in groundwater: groundwater
  transport--advection and dispersion.
\newblock In National~Research Council, editor, {\em Groundwater
  Contamination}, Studies in Geophysics, pages 37--45. National Academy Press,
  Washington DC, 1984.

\bibitem{bg89}
L.~Boccardo and T.~Gallou{\"e}t.
\newblock Nonlinear elliptic and parabolic equations involving measure data.
\newblock {\em J. Funct. Anal.}, 87(1):149--169, 1989.

\bibitem{bg92}
L.~Boccardo and T.~Gallou{\"e}t.
\newblock Nonlinear elliptic equations with right-hand side measures.
\newblock {\em Comm. Partial Differential Equations}, 17(3-4):641--655, 1992.

\bibitem{ce99}
Z.~Chen and R.~Ewing.
\newblock Mathematical analysis for reservoir models.
\newblock {\em SIAM J. Math. Anal.}, 30(2):431--453 (electronic), 1999.

\bibitem{dsws94}
B.~Das, S.~Steinberg, S.~Weber, and S.~Schaffer.
\newblock Finite difference methods for modeling porous media flows.
\newblock {\em Transport in Porous Media}, 17(2):171--200, 1994.

\bibitem{D03}
J.~Droniou.
\newblock Global and local estimates for nonlinear noncoercive elliptic
  equations with measure data.
\newblock {\em Comm. Partial Differential Equations}, 28(1-2):129--153, 2003.

\bibitem{DE15}
J.~Droniou and R.~Eymard.
\newblock Uniform-in-time convergence of numerical methods for non-linear
  degenerate parabolic equations.
\newblock {\em Numer. Math.}, 132(4):721--766, 2016.

\bibitem{dt14}
J.~Droniou and K.S. Talbot.
\newblock On a miscible displacement model in porous media flow with measure
  data.
\newblock {\em SIAM J. Math. Anal.}, 46(5):3158--3175, 2014.

\bibitem{ew83}
R.E. Ewing.
\newblock Problems arising in the modeling of processes for hydrocarbon
  recovery.
\newblock In R.E. Ewing, editor, {\em The Mathematics of Reservoir Simulation},
  Frontiers in Applied Mathematics, pages 3--34. SIAM, Philadelphia, 1983.

\bibitem{fg00}
P.~Fabrie and T.~Gallou\"et.
\newblock Modelling wells in porous media flow.
\newblock {\em Math. Models Methods Appl. Sci.}, 10(5):673--709, 2000.

\bibitem{fe95}
X.~Feng.
\newblock On existence and uniqueness results for a coupled system modeling
  miscible displacement in porous media.
\newblock {\em J. Math. Anal. Appl.}, 194(3):883--910, 1995.

\bibitem{mg99}
T.~Gallou\"et and A.~Monier.
\newblock On the regularity of solutions to elliptic equations.
\newblock {\em Rend. Mat. Appl. (7)}, 19(4):471--488 (2000), 1999.

\bibitem{gr85}
P.~Grisvard.
\newblock {\em Elliptic problems in nonsmooth domains}, volume~24 of {\em
  Monographs and Studies in Mathematics}.
\newblock Pitman (Advanced Publishing Program), Boston, MA, 1985.

\bibitem{kaz98}
A.V. Kazhikhov.
\newblock {\em Recent developments in the global theory of two-dimensional
  compressible {N}avier-{S}tokes equations}, volume~25 of {\em Seminar on
  Mathematical Sciences}.
\newblock Keio University, Department of Mathematics, Yokohama, 1998.

\bibitem{kov63}
E.J. Koval.
\newblock A method for predicting the performance of unstable miscible
  displacement in heterogeneous media.
\newblock {\em Soc. Pet. Eng. J.}, 3(2):145--154, 1963.

\bibitem{pe66}
D.W. Peaceman.
\newblock Improved treatment of dispersion in numerical calculation of
  multidimensional miscible displacement.
\newblock {\em Soc. Pet. Eng. J.}, 6(3):213--216, 1966.

\bibitem{pe77}
D.W. Peaceman.
\newblock {\em Fundamentals of Numerical Reservoir Simulation}.
\newblock Elsevier, New York, 1977.

\bibitem{pt02}
J.R.A. Pearson and P.M.J. Tardy.
\newblock Models for flow of non-newtonian and complex fluids through porous
  media.
\newblock {\em J. Non-Newtonian Fluid Mech.}, 102(2):447--473, 2002.

\bibitem{rw83}
T.F. Russell and M.F. Wheeler.
\newblock Finite element and finite difference methods for continuous flows in
  porous media.
\newblock In R.E. Ewing, editor, {\em The Mathematics of Reservoir Simulation},
  Frontiers in Applied Mathematics, pages 34--106. SIAM, Philadelphia, 1983.

\bibitem{bs84}
G.R. Shubin and J.B. Bell.
\newblock An analysis of the grid orientation effect in numerical simulation of
  miscible displacement.
\newblock {\em Comp. Meth. in Appl. Mech. \& Eng.}, 47(1):47--71, 1984.

\bibitem{sim-87-com}
J.~Simon.
\newblock Compact sets in the space {$L^p(0,T;B)$}.
\newblock {\em Ann. Mat. Pura Appl. (4)}, 146:65--96, 1987.

\bibitem{stam65}
G.~Stampacchia.
\newblock Le probl\`eme de {D}irichlet pour les \'equations elliptiques du
  second ordre \`a coefficients discontinus.
\newblock {\em Ann. Inst. Fourier (Grenoble)}, 15(fasc. 1):189--258, 1965.

\bibitem{young84}
L.C. Young.
\newblock A study of spatial approximations for simulating fluid displacements
  in petroleum reservoirs.
\newblock {\em Comp. Methods in Appl. Mech. \& Eng.}, 47(1-2):3--46, 1984.

\end{thebibliography}
\bibliographystyle{plain}

\end{document}